\theoremstyle{plain}
\newtheorem{thm}{Theorem}[section]
\newtheorem{lem}[thm]{Lemma}
\newtheorem{cor}[thm]{Corollary}
\newtheorem{prop}[thm]{Proposition}
\theoremstyle{definition}
\newtheorem{df}[thm]{Definition}
\newtheorem{eg}[thm]{Example}
\theoremstyle{remark}
\newtheorem{rem}[thm]{Remark}
\numberwithin{equation}{section}
\newcommand{\R}{\mathbb{R}}
\newcommand{\Q}{\mathbb{Q}}
\newcommand{\Z}{\mathbb{Z}}
\newcommand{\Ob}{\mathrm{Ob}}
\newcommand{\Mor}{\mathrm{Mor}}
\newcommand{\Cat}{{\sf Cat}}
\newcommand{\FAb}{{\sf FAb}}
\newcommand{\FN}{{\sf FN}}
\newcommand{\Set}{{\sf Set}}
\newcommand{\GMet}{{\sf GMet}}
\newcommand{\Met}{{\sf Met}}
\newcommand{\Fset}{{\sf Fset}}
\newcommand{\Fsetcat}{{\sf Fsetcat}}
\newcommand{\CFset}{{\sf CFset}}
\newcommand{\LFset}{{\sf LFset}}
\newcommand{\CFsetcat}{{\sf CFsetcat}}
\newcommand{\Mag}{{\sf Mag}}
\newcommand{\Fsset}{\Fset^{\Delta^{\rm op}}}
\newcommand{\Co}{{\sf C}}
\newcommand{\FC}{{\sf FC}}
\newcommand{\wt}{\widetilde}
\newcommand{\del}{\partial}
\newcommand{\too}{\longrightarrow}
\DeclareMathOperator{\MC}{\sf MC}
\DeclareMathOperator{\MH}{\sf MH}
\DeclareMathOperator{\HC}{\sf HC}
\DeclareMathOperator{\HH}{\sf HH}
\DeclareMathOperator{\rk}{rk}
\title{
Magnitude and magnitude homology of filtered set enriched categories
}
\author{Yasuhiko \textsc{Asao}\thanks{Department of Applied Mathematics, Fukuoka University \texttt{asao@fukuoka-u.ac.jp}}}
\date{\today}
\begin{document}
\maketitle
\begin{abstract}
In this article, we give a framework for studying the Euler characteristic and its categorification of objects across several areas of geometry, topology and combinatorics. That is, {\it the magnitude theory of filtered set enriched categories}. It is a unification of {\it the Euler characteristic of finite categories} and {\it the  magnitude} of  metric spaces, both of which are introduced by Leinster (\cite{L3}, \cite{L2}). Our definitions cover a class of metric spaces which is broader than the original ones in \cite{L3} and \cite{LS}, so that magnitude (co)weighting of infinite metric spaces can be considered. We give examples of the magnitude from various research areas containing {\it the Poincar\'{e} polynomial} of ranked posets and {\it the growth series} of finitely generated groups. In particular, the magnitude homology gives categorifications of them. We also discuss homotopy invariance of the magnitude homology and its variants. Such a homotopy includes {\it digraph homotopy} and {\it $r$-closeness} of Lipschitz maps. As a benefit of our categorical view point, we generalize the notion of {\it Grothendieck fibrations} of small categories to our enriched categories, whose restriction to metric spaces is a notion called {\it metric fibration} that is initially introduced in \cite{L3}. It is remarkable that the magnitude of such a fibration is a product of those of the fiber and the base. We especially study fibrations of graphs, and give examples of graphs with the same magnitude but are not isomorphic. 
\end{abstract}
\tableofcontents
\section{Introduction}
In \cite{L2}, T. Leinster introduced {\it the Euler characteristic of finite categories}, which generalizes that of finite simplicial complexes. Later, he introduced {\it the  magnitude} of metric spaces (\cite{L3}), as an analogue of the above. These two notions have a formalization of great generality, namely {\it the magnitude and the magnitude homology of enriched categories} due to Leinster-Shulman (\cite{LS}). In this formalization, the magnitude can be defined by choosing a monoidal category $V$ and a ``size function'' $\Ob V \too k$, where $k$ is an arbitrary semi-ring. In particular, the magnitude is not defined as a single object applicable to all enriched categories simultaneously. For example, the above two magnitude are considered as different things so far.

The aim of this article is to give a foundation of the magnitude and the magnitude homology of {\it filtered set enriched categories}, which unifies both of the Euler characteristic for finite categories and the magnitude of metric spaces. We propose to deal with small categories and metric spaces in a single category. Then topological and geometric study for small categories or metric spaces can be generalized to this larger category, which can be a new approach to geometry, topology and combinatorics. In fact,  our discussions in this article suggests that there should be a homotopy theory including the magnitude theory for such a larger category, where some topological and geometric studies can be considered in a unified manner.

What we achieve in this article are summarized as follows.
\subsection*{Generalizing magnitude theory}
\begin{enumerate}
\item We construct a framework for magnitude theory of filtered set enriched categories (Sections \ref{mh1}, \ref{mh2}). Such a framework unifies the Euler characteristic of finite categories and the magnitude of metric spaces with their categorifications, both of which are introduced by Leinster.
\item In our framework, the class of metric spaces for which we can define the magnitude (co)weighting and their categorification is broader than the original one. For example, we can compute the magnitude (co)weighting of locally finite graphs that possibly have infinitely many vertices (Section \ref{eg}). More precisely, we fix a ``size function'' $\CFset \too \Q[[q^{\R}]]$, where $\CFset$ is the category of {\it collectable filtered sets} (Definition \ref{coll}), and $\Q[[q^{\R}]]$ is the {\it Novikov series ring} (Definition \ref{novikovdf}). Then we can define the magnitude of {\it finite} $\CFset$-categories and the magnitude (co)weighting of $\CFset$-categories {\it of finite type} (Definition \ref{defmag}), both of which contain finite categories and finite metric spaces.  Further, for a special class of $\CFset$-categories, namely {\it tame categories} (Definition \ref{tamedf}), we have an explicit formula for the magnitude and the magnitude (co)weighting, which is a generalization of Leinster's power series expression of the magnitude. It turns out that locally finite graphs are tame categories (Section \ref{mgms}).
\item To that end, we discuss the convergence of infinite summations of Novikov series. Then we derive conditions for metric spaces and small categories under which the magnitude can be calculated (Section \ref{eg}).
\end{enumerate}
\subsection*{Examples of magnitude}
 As an example of the magnitude in our framework, we show that the following objects are the magnitude or the magnitude (co)weighting (Section \ref{eg}).
\begin{enumerate}
\item magnitude of finite metric spaces in the original sense
\item magnitude (co)weighting of locally finite graphs with infinitely many vertices
\item Euler characteristic of finite categories
\item Euler characteristic of finite simplicial complexes
\item  Poincar\'{e} polynomial of ranked posets 
\item the growth series of finitely generated groups
\end{enumerate}
In particular, the magnitude homology in our sense categorifies them. We have no idea whether the magnitude homology of the Poincar\'{e} polynomial or the growth series can have torsions, and what torsions means if any. It is known that the magnitude homology can have torsions in general (\cite{YK}, \cite{SS}).

\subsection*{Magnitude homology as Hochschild homology}
Leinster-Shulman pointed out in Remark 5.11 of \cite{LS} that the magnitude homology has a form of Hochschild homology in a generalized sense. Here, we give a more ring theoretic description (Section \ref{mhashh}, Corollary \ref{hhdesc}). Namely we have the following.
\begin{thm}[Corollary \ref{hhdesc}]
For a filtered set enriched category $C$, we have an isomorphism
\begin{align*}
\MH^{\ell}_\bullet C \cong Gr_\ell \HH_\bullet({\rm Gr}P_C(\Z), M_C(\Z))
\end{align*}
 for any $\ell \in \R_{\geq0}$. Hence we have $\bigoplus_{\ell\geq 0}\MH^{\ell}_\bullet C \cong \HH_\bullet({\rm Gr}P_C(\Z), M_C(\Z))$.
\end{thm}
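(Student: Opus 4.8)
The plan is to identify, degreewise and compatibly with differentials, the magnitude chain complex of $C$ (graded by total length $\ell$) with the normalized Hochschild (bar) complex computing $\HH_\bullet({\rm Gr}P_C(\Z),M_C(\Z))$. First I would write the degree-$n$ term of the latter as $M_C(\Z)\otimes({\rm Gr}P_C(\Z))^{\otimes n}$ equipped with the standard bar differential, and use that $P_C(\Z)$ is free over $\Z$ on the morphisms of $C$, filtered by the filtered-set enrichment. A homogeneous basis tensor $m\otimes\bar a_1\otimes\cdots\otimes\bar a_n$ then corresponds to a composable string of morphisms of $C$ together with a base object recorded by $m\in M_C(\Z)$—that is, to a nondegenerate generator of the magnitude chain complex—while the total internal degree of the product $\bar a_1\cdots\bar a_n$ equals $\ell$. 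This yields a degreewise $\Z$-linear bijection on generators.

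The heart of the matter is the comparison of differentials, for which the decisive fact is the \emph{multiplication rule in the associated graded}: for composable $a\colon x\to y$ and $b\colon y\to z$, the product $\bar a\cdot\bar b$ in ${\rm Gr}P_C(\Z)$ equals $\overline{b\circ a}$ when the enrichment is length-additive on this composite, and is $0$ otherwise. Consequently each interior bar term, which multiplies adjacent factors $\bar a_i\bar a_{i+1}$, reproduces exactly the length-preserving interior face of the magnitude differential and vanishes precisely when the lengths fail to add. The two outer bar terms $m\bar a_1$ and $\bar a_n m$ are controlled by the bimodule $M_C(\Z)$: since it is concentrated in internal degree zero, every positive-length element of ${\rm Gr}P_C(\Z)$ annihilates it, so these terms vanish—matching the fact that the magnitude differential omits the endpoint faces. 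With the interior terms matched and the outer terms killed, the bijection on generators becomes a chain isomorphism.

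Because this differential preserves internal degree, the whole complex splits as $\bigoplus_\ell$ along total length; the degree-$\ell$ summand is $Gr_\ell$ of the Hochschild complex, and under the isomorphism above it is precisely $\MH^\ell_\bullet C$, giving the first assertion, with the second following by summing over $\ell$. I expect the main obstacle to be the bookkeeping of the associated-graded passage together with the normalization: one must check that forming ${\rm Gr}P_C(\Z)$ converts the filtration inequality on $P_C(\Z)$ into the strict length-additivity condition governing the magnitude differential, and that the normalized bar complex (the quotient by tensors having some identity $e_x$ as a factor) matches the magnitude convention of admitting only strings of nondegenerate morphisms. Once these identifications are in place, commutation with the differential is the term-by-term application of the multiplication rule above, which completes the proof.
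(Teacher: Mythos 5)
There is a genuine gap, and it sits at the very center of your argument: the claimed ``degreewise $\Z$-linear bijection on generators'' between $\HC_n({\rm Gr}P_C(\Z),M_C(\Z))=({\rm Gr}P_C(\Z))^{\otimes n}\otimes M_C(\Z)$ and the magnitude chain complex does not exist. The tensor products here are taken over $\Z$, and ${\rm Gr}P_C(\Z)$ is free on \emph{all} of $\Mor C$, so a basis of $\HC_n$ consists of all tuples $(f_1,\dots,f_n,(a,b))$ with the $f_i$ arbitrary (not necessarily composable) morphisms and $(a,b)$ an arbitrary pair of objects, with no constraint tying $(a,b)$ to the endpoints of the string. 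The magnitude chain complex sees only composable strings with the matching base element $(tf_n,sf_1)$. These two complexes have different ranks in every degree; the correct relationship is a homotopy equivalence, not an isomorphism. This is exactly what the paper's Lemma \ref{quotcont} supplies: one embeds $\FC_\bullet C$ into $\HC_\bullet(P_C(\Z),M_C(\Z))$ (Lemma \ref{injfilt}) and then constructs an explicit contracting homotopy $B^\ell_\bullet$ on the quotient, inserting an identity morphism at the first place where composability fails. Without this step (or an equivalent one, e.g.\ passing to Hochschild homology relative to the subring spanned by the identities and justifying that this does not change the answer), your argument establishes nothing. The final assembly in the paper then runs the five lemma on the filtration quotients and uses that $F_\ell\HC_\bullet({\rm Gr}P_C(\Z),M_C(\Z))$ is a direct summand to identify $H_\bullet$ of the associated graded with ${\rm Gr}_\ell\HH_\bullet$.

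A secondary error: your claim that ``every positive-length element of ${\rm Gr}P_C(\Z)$ annihilates'' $M_C(\Z)$ is false. The action is $f\cdot(a,b)=(sf,b)$ whenever $tf=a$, independent of $\deg f$. The mechanism that kills the outer bar faces in ${\rm Gr}_\ell$ is not the module structure but the passage to the associated graded of the filtration $F_\ell\HC_n=\Z\{r_1\otimes\cdots\otimes r_n\otimes m\mid\sum_i\deg r_i\leq\ell\}$: the faces $d_0$ and $d_n$ strictly lower $\sum_i\deg r_i$ when the outer factor has positive degree, hence vanish in the quotient by $\bigcup_{\ell'<\ell}F_{\ell'}$ --- but they \emph{survive} when the outer morphism has degree zero (which happens for non-$T_1$ categories), so they cannot simply be discarded. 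Your treatment of the interior faces via the multiplication rule in ${\rm Gr}P_C(\Z)$ (product equals the composite when degrees add, zero otherwise) is correct and does match the magnitude differential, and the final direct-sum decomposition over $\ell$ is in the right spirit; but the proof cannot be completed along the lines you propose without confronting the non-composable generators.
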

That is, we express the magnitude homology as the Hochschild homology of  a generalization of the category algebra. We use techniques of homology theory for small categories to give such an expression. This can also be considered as a kind of generalization of Gerstenhaber-Schack's theorem (\cite{GS}) asserting that the cohomology of a simplicial complex is isomorphic to the Hochschild homology of the incidence algebra (Remark \ref{GS}).

\subsection*{Homotopies for filtered set enriched categories}
For a $\Z$-filtered set enriched categories, we construct a spectral sequence $E$ that satisfying the following.
\begin{thm}[Proposition \ref{speconv}]
Let $C$ be a $\Z$-filtered set enriched category. Then
\begin{enumerate}
\item $E^1_{p, q} = \MH^{p}_{p+q}C$.
\item If $C$ is a digraph, we have $E^2_{p, 0} = \wt{H}_pC$, where $\wt{H}_\bullet$ denotes {\it the reduced path homology} introduced by Grigor'yan--Muranov--Lin--S. -T. Yau et al. \cite{GLMY}.
\item It converges to the homology of the underlying small category $\underline{C}$ if it converges, in particular when $\max \{\deg f \mid f \in \Mor C\}$ is finite.
\end{enumerate}
\end{thm}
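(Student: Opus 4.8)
\emph{Construction and claim (1).} The plan is to obtain $E$ as the spectral sequence of the degree filtration on the complex computing the homology of $\underline{C}$. Let $B_\bullet$ be the normalized bar complex of $\underline{C}$, so that $H_\bullet(B_\bullet)=H_\bullet(\underline{C})$. Since $C$ is $\Z$-filtered, every morphism has an integer degree, and I set $F_pB_n$ to be the span of the chains $(f_1,\dots,f_n)$ with $\sum_i\deg f_i\le p$. The key point is that the bar differential cannot raise degree: the endpoint faces $d_0,d_n$ delete a (nonidentity, hence positive degree) morphism and strictly drop the degree, whereas an inner face composes $f_{i+1}f_i$ and $\deg(f_{i+1}f_i)\le\deg f_i+\deg f_{i+1}$ by the filtration axiom. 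Hence $\partial(F_p)\subseteq F_p$, and the $\Z$-grading lets me index the resulting spectral sequence by integers, with $E^0_{p,q}=F_pB_{p+q}/F_{p-1}B_{p+q}$. The associated graded differential $d^0$ retains exactly the degree-preserving faces; endpoint faces never qualify, and an inner face survives iff $\deg(f_{i+1}f_i)=\deg f_i+\deg f_{i+1}$, which is precisely the face relation defining the magnitude differential. Thus $(E^0_{p,\bullet},d^0)$ is the magnitude chain complex in length $p$, and passing to homology gives $E^1_{p,q}=\MH^{p}_{p+q}C$, proving (1). Equivalently, this is the degree filtration on $\HH_\bullet(P_C(\Z),M_C(\Z))$, whose $E^1$ is $\bigoplus_pGr_p\HH_{p+q}(\mathrm{Gr}P_C(\Z),M_C(\Z))=\MH^p_{p+q}C$ by Corollary \ref{hhdesc}.

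\emph{Claim (3).} This is the standard convergence statement for the spectral sequence of a filtered complex. The filtration is nonnegative ($F_{-1}B_\bullet=0$, as degrees are $\ge 0$) and exhaustive, so the sequence has a well-defined abutment. When $D:=\max\{\deg f\mid f\in\Mor C\}<\infty$ the filtration is moreover bounded, $F_{nD}B_n=B_n$, so in each total degree only finitely many filtration steps are nonzero; the sequence then degenerates at a finite page at every spot and converges to $H_\bullet(B_\bullet)=H_\bullet(\underline{C})$. Without this bound one only has exhaustive/conditional convergence, which is exactly the reason for the proviso ``if it converges''; I would record the completeness hypotheses (e.g. the filtration complete Hausdorff) under which the abutment remains $H_\bullet(\underline{C})$.

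\emph{Claim (2), the main work.} Here I specialize to a digraph $C$ with $\deg$ the directed edge-path length, and analyse the diagonal row $q=0$. First, $E^1_{p,0}=\MH^{p}_{p}C$ is spanned by length-$p$ chains with $p$ steps; since each nonidentity step has degree $\ge1$, all $p$ steps must be edges, so $\MH^p_pC$ is the module $A_p$ of allowed elementary $p$-paths of Grigor'yan--Muranov--Lin--Yau. Two identifications remain. (i) $E^1_{p,0}=\ker\!\bigl(d^0|_{A_p}\bigr)$ equals the space $\Om_p$ of $\partial$-invariant allowed paths: on an edge-walk an inner face $d_i$ collapses the triangle $x_{i-1}x_ix_{i+1}$ and stays in filtration $p$ precisely when $d(x_{i-1},x_{i+1})=2$, i.e. exactly when the collapsed face is \emph{not} an allowed path, so $d^0c$ is the non-allowed part of the path boundary $\partial c$, and $d^0c=0\Leftrightarrow\partial c\in A_{p-1}\Leftrightarrow c\in\Om_p$. (ii) The differential $d^1$, assembled from the faces that drop degree by exactly one (the endpoint deletions and the inner collapses with $d(x_{i-1},x_{i+1})=1$), agrees on $\Om_\bullet$ with the path-complex boundary $\partial$, since for $c\in\Om_p$ the degree-$p$ part of $\partial c$ has already vanished. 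Combining (i) and (ii) yields $E^2_{p,0}=H_p(\Om_\bullet,\partial)=\wt{H}_pC$.

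\emph{Where the difficulty lies.} The delicate part of (2) is the bookkeeping of degenerate faces and of the augmentation. Inner faces with $x_{i-1}=x_{i+1}$ drop degree by two and produce non-regular paths; these must be set to zero consistently with the normalization of the magnitude complex so that $\ker d^0=\Om_p$ holds on the nose and not merely modulo degenerate terms, which I would guarantee by working throughout in the normalized (regular) complex. Obtaining \emph{reduced} rather than unreduced path homology is the second subtle point: the empty-path term $\Om_{-1}=\Z$ of the path complex must be matched with the augmentation of $B_\bullet$, and one verifies at the corner that $E^2_{0,0}=\wt{H}_0C$ (the two versions differ only in degree $0$). Checking that the surviving signs in $d^1$ coincide with those of $\partial$, and that this identification is compatible with the abutment of (3), is the portion demanding the most care.
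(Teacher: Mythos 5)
Your construction agrees with the paper's: $E$ is by definition the spectral sequence of the degree filtration on $\FC_\bullet C$, and your identification of $E^0_{p,\bullet}$ with the magnitude complex and of $E^1_{p,q}$ with $\MH^{p}_{p+q}C$ is exactly what the paper dismisses as ``(1) is straightforward.'' The paper gives no argument for (2) at all --- it is a citation of Theorem 1.2 of \cite{A} --- so the real content of your proposal is a reconstruction of that external proof, and it is the correct one: on the row $q=0$ the column $E^0_{p,\bullet}$ vanishes for $q>0$, so $E^1_{p,0}$ is the kernel of $d^0$ on allowed $p$-paths, which is $\Omega_p$, and $d^1$ is the path boundary. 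Two caveats. First, your parenthetical ``nonidentity, hence positive degree'' is false for a general $\Z_{\geq 0}$-filtered category (nonidentity morphisms of degree $0$ may exist --- this is precisely what the paper's $T_1$ condition rules out); this does not damage (1), since $E^0_{p,q}=\wt{\MC}^{p}_{p+q}C$ holds by the very definition of $\wt{\MC}$ as the filtration quotient whatever faces survive in $d^0$, and the assumption is genuinely true for digraphs, which is all that (2) needs. Second, the corner term you defer is where the statement is actually delicate: the filtration on $\FC_\bullet C$ gives $E^2_{0,0}=\Omega_0/\partial\Omega_1$, the \emph{unreduced} $H_0$ of the path complex, so the asserted equality $E^2_{0,0}=\wt{H}_0C$ depends on the precise convention for $\wt{H}_\bullet$ adopted in \cite{A} and \cite{GLMY}; you should either pin that convention down or assert (2) only for $p\geq 1$ and treat $p=0$ separately. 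With those two points addressed, your write-up supplies a proof where the paper supplies only a reference.
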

Then we discuss homotopy invariance of each page of this spectral sequence. We show that the $(r+1)$-th page of this spectral sequence is invariant under ``$r$-homotopy'' described as follows.
\begin{enumerate}
\item $0$-homotopy contains equivalences of small categories.
\item $1$-homotopy for digraphs is exactly {\it the digraph homotopy}.
\item $r$-homotopy for metric spaces is exactly {\it the $r$-closeness} of Lipschitz maps.
\end{enumerate}
\begin{thm}[Theorem \ref{rhtpy}]
The $(r+1)$-page of the above spectral sequence $E^{r+1}_{\bullet\bullet}$ is invariant under $r$-homotopy.
\end{thm}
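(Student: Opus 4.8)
The plan is to realize the spectral sequence $E$ as the one associated to a total-degree filtration on the (normalized) bar complex computing the homology $H_\bullet(\underline{C})$ of the underlying small category, and then to reduce the statement to a purely algebraic lemma: two filtered chain maps joined by a chain homotopy that raises the filtration by at most $r$ induce the same map on the $(r+1)$-st page. Write $C_\bullet=C_\bullet(\underline{C})$ for this bar complex, filtered increasingly by letting $F_{\le p}C_n$ be spanned by composable strings $x_0\to\cdots\to x_n$ whose total degree $\sum_i\deg f_i$ is at most $p$. Since $\deg(gf)\le\deg g+\deg f$ in a $\CFset$-category, the differential satisfies $\partial(F_{\le p})\subseteq F_{\le p}$, the associated graded carries the magnitude differential, and so $E^1_{p,q}=\MH^p_{p+q}C$ with $s$-th differential lowering $p$ by $s$, matching Proposition \ref{speconv}. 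First I would record that a morphism $f\colon C\to D$, being a $\CFset$-functor and hence non-increasing on degrees ($\deg(f\varphi)\le\deg\varphi$), induces a \emph{filtered} chain map $f_\bullet$ with $f_\bullet(F_{\le p})\subseteq F_{\le p}$, so that it descends to a morphism of spectral sequences.

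Next I would translate an $r$-homotopy into a chain homotopy with a controlled filtration shift. Interpreting an $r$-homotopy $H$ between $f,g\colon C\to D$ as a map out of the product of $C$ with an interval object whose single nontrivial arrow $0\to 1$ has degree $r$ (for metric spaces this is precisely the datum $d(f(x),g(x))\le r$, for digraphs it is the homotopy edge, and for $r=0$ it is a natural transformation with degree-$0$ components), the usual prism operator
\begin{equation*}
h(x_0\to\cdots\to x_n)=\sum_{i=0}^{n}(-1)^i\bigl(f x_0\to\cdots\to f x_i\to g x_i\to\cdots\to g x_n\bigr),
\end{equation*}
in which the inserted arrow $f x_i\to g x_i$ is the component of the $r$-homotopy of degree at most $r$, satisfies $\partial h+h\partial=f_\bullet-g_\bullet$. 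Since each summand has total degree at most $\deg(x_0\to\cdots\to x_n)+r$, we obtain the key bound $h(F_{\le p})\subseteq F_{\le p+r}$.

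The core is then the algebraic lemma. Recall $E^{r+1}_{p,q}=Z^{r+1}_{p,q}/\bigl(Z^{r}_{p-1,q+1}+\partial Z^{r}_{p+r,q-r+1}\bigr)$, where $Z^{s}_{p,q}=\{x\in F_{\le p}C_{p+q}:\partial x\in F_{\le p-s}C_{p+q-1}\}$. Take a representative $x\in Z^{r+1}_{p,q}$, so $x\in F_{\le p}$ and $\partial x\in F_{\le p-(r+1)}$, and compute $(f_\bullet-g_\bullet)(x)=\partial h(x)+h(\partial x)$. For the first term, $h(x)\in F_{\le p+r}$ and $\partial h(x)=(f_\bullet-g_\bullet)x-h(\partial x)\in F_{\le p}$, so $h(x)\in Z^{r}_{p+r,q-r+1}$ and $\partial h(x)\in\partial Z^{r}_{p+r,q-r+1}$. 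For the second term, $h(\partial x)\in F_{\le p-1}$ and $\partial h(\partial x)=(f_\bullet-g_\bullet)(\partial x)\in F_{\le p-(r+1)}=F_{\le(p-1)-r}$, so $h(\partial x)\in Z^{r}_{p-1,q+1}$. Both terms therefore lie in the subgroup defining the relations of $E^{r+1}_{p,q}(D)$, whence $f_\bullet=g_\bullet$ on $E^{r+1}$, and consequently on every later page. Applying this to the two composites of an $r$-homotopy equivalence shows that $E^{r+1}_{\bullet\bullet}$ is invariant.

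The main obstacle is the second step: making precise the ``interval object with an arrow of degree $r$,'' constructing the prism operator inside the filtered-set-enriched (rather than the simplicial) setting, and checking on the nose that every summand raises the total degree by at most $r$ — in particular that the inserted homotopy arrow genuinely carries degree $\le r$ and that no face or degeneracy in the bar construction shifts the filtration in the wrong direction. One must also verify that the three advertised special cases ($0$-homotopy containing categorical equivalence, $1$-homotopy for digraphs being digraph homotopy, and $r$-homotopy for metric spaces being $r$-closeness) are all recovered from this single interval-object description, which is where the bookkeeping of degrees is most delicate; the algebraic lemma above is then routine.
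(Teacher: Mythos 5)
Your proposal is correct and is, in substance, the paper's argument: the paper constructs exactly your prism operator in Proposition \ref{natinv} (there with $\deg\tau(tf_i)=0$; for an $r$-natural transformation the same formula gives $\deg\tau(tf_i)\le r$, hence a chain homotopy on $\FC_\bullet$ shifting the filtration by at most $r$), and then disposes of Theorem \ref{rhtpy} in one line by citing Proposition 3.9 of Cirici et al.\ \cite{Egas}, which is precisely the algebraic lemma you prove by hand with the $Z^s_{p,q}$ bookkeeping. Your computation of that lemma is correct ($\partial h(x)\in\partial Z^r_{p+r,q-r+1}$ and $h(\partial x)\in Z^r_{p-1,q+1}$ both land in the denominator of $E^{r+1}_{p,q}$), so the only difference from the paper is that you supply a self-contained proof of the outsourced step; the one point to make explicit, as the paper does via Definition \ref{natran} and its $r$-analogue, is that the ``interval object'' is the $\Fset$-category $I_r$ and the degree bound on the inserted arrow follows from $\tau:C\times I_r\to D$ being a filtered functor.
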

For an example, we have the following.
\begin{thm}[Proposition \ref{kolinv}]
For any generalized metric space $X$, we denote its {\it Kolmogorov quotient} by ${\sf KQ}X$. Then we have $\MH^{\ell}_\bullet X \cong \MH^{\ell}_\bullet {\sf KQ}X$. Namely, the magnitude homology is invariant under Kolmogorov quotient.
\end{thm}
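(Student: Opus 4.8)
The plan is to realize the quotient through a pair of maps that are mutually inverse on magnitude homology, using the $0$-homotopy invariance recorded in Theorem~\ref{rhtpy}. First I would set up the quotient carefully. On the underlying set of $X$ declare $x \sim y$ iff $d(x,y) = d(y,x) = 0$; the triangle inequality shows this is an equivalence relation and that $d$ descends to a well-defined distance on ${\sf KQ}X = X/\!\sim$ (if $x \sim x'$ and $y \sim y'$ then $d(x,y)=d(x',y')$), for which the separation axiom now holds. The quotient map $q \colon X \to {\sf KQ}X$ is then distance-preserving, hence a morphism of generalized metric spaces, and any choice of class representatives gives a distance-preserving section $s \colon {\sf KQ}X \to X$ with $q \circ s = \mathrm{id}_{{\sf KQ}X}$. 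By functoriality, $q_* \circ s_* = \mathrm{id}$ on $\MH^{\ell}_\bullet$ for every $\ell$, so it remains only to prove $s_* \circ q_* = \mathrm{id}$ on $\MH^{\ell}_\bullet X$.

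Second, I would observe that the idempotent $s \circ q \colon X \to X$, which sends each point to the chosen representative of its class, is $0$-close to $\mathrm{id}_X$: for every $x$ we have $d(x, sq(x)) = d(sq(x), x) = 0$, since $x \sim sq(x)$. This is exactly a $0$-homotopy in the sense of the paper, and it is a genuinely nontrivial one precisely because $X$ need not satisfy separation, so that $sq \neq \mathrm{id}_X$ in general. Since $\MH^{\ell}_\bullet$ appears as the $E^1$-page of the spectral sequence of Proposition~\ref{speconv} (via $E^1_{p,q} = \MH^{p}_{p+q}$), the case $r=0$ of Theorem~\ref{rhtpy} gives $(sq)_* = (\mathrm{id}_X)_*$, i.e. $s_* \circ q_* = \mathrm{id}$. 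Combined with the previous paragraph, $q_*$ and $s_*$ are mutually inverse isomorphisms $\MH^{\ell}_\bullet X \cong \MH^{\ell}_\bullet {\sf KQ}X$, as required.

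Finally, to make the argument self-contained — and to pin down the one delicate point — I would exhibit the chain homotopy underlying the $r=0$ case directly on the magnitude chain complex, which also sidesteps the fact that the spectral sequence of Proposition~\ref{speconv} is literally phrased for $\Z$-filtered categories whereas metric spaces are $\R$-filtered. Writing a length-$\ell$ generator as a tuple $(x_0, \dots, x_n)$ of consecutively distinct points with $\sum_i d(x_i, x_{i+1}) = \ell$, set
\begin{equation*}
H(x_0, \dots, x_n) = \sum_{i=0}^{n} (-1)^i\,(x_0, \dots, x_i,\, sq(x_i), \dots, sq(x_n)).
\end{equation*}
Each term again has total length $\ell$, because inserting $sq(x_i)$ after $x_i$ contributes $d(x_i, sq(x_i)) = 0$ while $sq$ preserves all the remaining distances, so $H$ lands in $\MC^{\ell}_{n+1}X$; the usual prism computation then yields $\partial H + H\partial = (sq)_* - \mathrm{id}$. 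The hard part is precisely this bookkeeping: one must check that the degenerate prism terms (those in which $x_i$ already coincides with its representative $sq(x_i)$) vanish as required, and that the length-preserving condition restricting the interior faces $\partial_i$ in the magnitude differential is respected by every term, so that $H$ is compatible with the $\ell$-grading. Once this is verified the proposition follows, and the same computation explains why the conclusion holds at each fixed $\ell$ rather than only after summation over $\ell$.
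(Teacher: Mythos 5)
Your proof is correct and is essentially the paper's argument: the paper packages the section/retraction pair as an adjunction $p \dashv q$ (Lemma \ref{kqad}, using that $d(px,[y]) = d(x,q[y])$) and then invokes Corollary \ref{adinv}, whose content is exactly your $0$-natural transformation $\mathrm{id}_X \Rightarrow sq$ together with the prism chain homotopy you write down (Proposition \ref{natinv}). Note that the correct reference for the $r=0$ invariance is Proposition \ref{natinv}/Corollary \ref{adinv}, which is stated for arbitrary $\Fset$-categories, so the $\Z$- versus $\R$-filtration issue you guard against with the explicit homotopy does not actually arise.
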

The above results suggest that there should be ``$r$-homotopy theories'' including the magnitude theory in the category of filtered set enriched categories. In fact, Cirici et al. (\cite{Egas}) show that there is a cofibrantly generated model structure on the category of $\Z_{\geq 0}$-filtered chain complexes, where weak equivalences are exactly the morphisms inducing quasi isomorphisms on $E^r$. Theorem \ref{rhtpy} suggests that there should be homotopy theories that coincide with the above Ciricis' structure by taking chain complexes. We also remark that the work \cite{Car} by Carranza et al. seems to support this hypothesis. In that paper, they constructed a cofibration category structure on the category of digraphs, where weak equivalences are exactly the morpshisms inducing isomorphisms on the path homology, namely a part of $E^2$.
\subsection*{Fibration of filtered set enriched categories}
We formulate {\it the fibration} of filtered set enriched categories (Section \ref{fibrationsec}).  It contains {\it the Grothendieck (op)fibration} of small categories and {\it metric fibrations} of metric spaces introduced by Leinster (\cite{L3}). A remarkable property of this notion is that the magnitude of a fibration splits as the product of those of the ``fiber'' and the ``base''. Namely we have the following.
\begin{prop}[Proposition \ref{mpop}]
Let $E \too X$ be a ``fibration'' with the fibers $F$ and the base $X$ admitting the magnitude. Then we have $\Mag E = \Mag X \cdot \Mag F$.
\end{prop}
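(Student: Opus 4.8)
The plan is to build an explicit weighting on $E$ out of the weightings on the base $X$ and on the fiber $F$, and then to sum it. Recall that the magnitude is computed from a weighting $w^C$, characterized by $\sum_{c'} Z^C(c,c')\, w^C(c') = 1$ for every object $c$, where $Z^C(c,c') = |C(c,c')|$ is the size of the hom-object in the Novikov ring $\Q[[q^{\R}]]$; the magnitude is then $\Mag C = \sum_c w^C(c)$. By hypothesis we may fix a weighting $w^X$ on $X$ and a weighting $w^F$ on the common fiber $F$. The coweighting case will be entirely symmetric, so I would treat only the weighting.

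First I would extract from the definition of a fibration (Section \ref{fibrationsec}) the two structural facts that drive the computation. Namely: (i) the defining lifting property makes every fiber $\pi^{-1}(x)$ isomorphic, as a $\CFset$-category, to $F$, so that $w^F$ transports to a weighting $w^{F}_{x}$ on each fiber with the same total sum $\Mag F$; and (ii) the hom-objects of $E$ factor through the lifting data, so that the size function is multiplicative in the Novikov ring and, after summing over a single fiber against its weighting, the zeta function of $E$ collapses to that of $X$:
\[
\sum_{e' \in \pi^{-1}(x')} Z^E(e,e')\, w^{F}_{x'}(e') = Z^X(\pi e, x').
\]
This identity is the heart of the matter. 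In the metric case it is exactly the distance decomposition $d_E(e,e') = d_X(\pi e, \pi e') + d_{F}(\tau e, e')$ of a metric fibration, where $\tau e$ is the horizontal lift, so that $Z^E(e,e') = q^{d_X(\pi e, \pi e')} q^{d_{F}(\tau e, e')}$ and the fiber sum reduces to $1$ by the weighting equation of $F$ applied at $\tau e$; in the Grothendieck case it is the factorization of morphisms through cartesian lifts.

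Next I would define the candidate weighting $w^E(e) := w^X(\pi e)\, w^{F}_{\pi e}(e)$ and verify the weighting equation directly by grouping the sum over $\Ob E$ into fibers and applying the identity above:
\[
\sum_{e'} Z^E(e,e')\, w^E(e') = \sum_{x'} w^X(x') \sum_{e' \in \pi^{-1}(x')} Z^E(e,e')\, w^{F}_{x'}(e') = \sum_{x'} Z^X(\pi e, x')\, w^X(x') = 1,
\]
so $w^E$ is a weighting on $E$. Summing $w^E$ over $\Ob E$ and decomposing by fibers once more gives $\Mag E = \sum_{x} w^X(x)\, \Mag F = \Mag X \cdot \Mag F$, as claimed.

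The main obstacle will be twofold. The conceptual half is extracting the multiplicative factorization of $Z^E$ cleanly from the abstract enriched definition of a fibration — in particular, checking that the lifting data induce a bijection of hom-objects compatible with the monoidal product of $\CFset$, so that the size function is genuinely multiplicative and the identity (ii) holds. The technical half is justifying the rearrangements when $E$ is infinite: since all the sums live in $\Q[[q^{\R}]]$, I would invoke the finite-type and tameness hypotheses of Sections \ref{mh1}--\ref{eg} (Definitions \ref{defmag} and \ref{tamedf}) to guarantee that each series converges and that the Fubini-type interchange of the double summation over base and fiber used above is legitimate.
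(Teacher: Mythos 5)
Your proposal is correct and follows essentially the same route as the paper: Proposition \ref{mpop} defines the product weighting $k^{(x,a)} = k^x k^a$ on $E(F)$ and verifies the weighting equation by exactly the fiberwise collapse you describe (the inner sum over a fiber reduces to $1$ via the fiber's weighting equation, using that $\#E(F)((x',b),(x,a)) = \sum_{f}q^{\deg f}\,\#Fx(Ffb,a)$), after which summing over $\Ob E(F)$ gives $\Mag E = \Mag X \cdot \Mag F$. Your attention to the convergence and rearrangement issues matches the paper's finite-type hypotheses, so no gap.
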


Further, we discuss in detail the restriction to metric spaces, namely metric fibrations, and we especially study it for graphs. In particular, we obtain a number of  examples of graphs such as in Figure 1 that have the same magnitude but are not isomorphic.
\subsubsection*{Acknowledgements}
The author is grateful to Luigi Caputi for fruitful and helpful comments and feedbacks on the first draft of the paper. He also would like to thank Ayato Mitsuishi and Masahiko Yoshinaga for valuable discussions and comments.
\section{Prerequisites}
\subsection{Enriched categories}\label{enrichedcat}
For a monoidal category $(V, \otimes, 1)$, {\it a category enriched over} $V$, or $V$-{\it category} $C$ is the following data :
\begin{itemize}
\item a set $\Ob C$,
\item ``morphisms'' $C(a, b) \in \Ob V$ for any $a, b \in \Ob C$,
\item ``a composition rule'' $C(a, b) \otimes C(b, c) \too C(a, c)$ for any $a, b, c \in \Ob C$,
\item ``an identity morphism'' $1 \too C(a, a)$ for any $a \in \Ob C$,
\end{itemize}
which satisfy suitable associativity and unit axioms. We don't explain details on this subject here. Necessary terminologies are also summarized in Section 1.2 of \cite{L3}. For details, we refer to \cite{Ke} or \cite{Mc}. We denote the category of small categories, namely $\Set$-categories, by $\Cat$. A small category is {\it finite} if it has finitely many objects and morphisms.
\subsection{Graphs}
\begin{enumerate}
\item {\it A graph} is a $1$-dimensional simplicial complex, that is a pair of sets $(V, E)$ such that $E \subset \{e \in 2^V \mid |e| = 2\}$. {\it A path metric} on a graph $G$ is a function $d : V(G)\times V(G) \too \Z_{\geq 0}\cup \{\infty\}$ defined by $d(a, b) = \min \{i \mid \text{there exist } a_0, \dots, a_i \in V(G) \text{ such that } a_0 = a, a_i = b, \{a_j, a_{j+1}\} \in E(G)\}$, or $\infty$ if there are no such sequences. {\it A graph homomorphism} $f : G \too H$ is a map $f : V(G) \too V(H)$ such that $d(a, b) \geq d(fa, fb)$ for any $a, b \in V(G)$. Equivalently, that is a map $f : V(G) \too V(H)$ such that either $fa = fb$ or $\{fa, fb\} \in E(H)$ for any $\{a, b\} \in E(G)$. We denote the category of graphs and graph homomorphisms by {\sf Grph}.
\item {\it A directed graph} or {\it a digraph} is a pair of sets $(V, E)$ such that $E \subset V\times V \setminus \Delta V$, where $\Delta V$ denotes the diagonal. {\it A directed path metric} on a digraph $G$ is a function $d : V(G)\times V(G) \too \Z_{\geq 0}\cup \{\infty\}$ defined by $d(a, b) = \min \{i \mid \text{there exist } a_0, \dots, a_i \in V(G) \text{ such that } a_0 = a, a_i = b, (a_j, a_{j+1}) \in E(G)\}$, or $\infty$ if there are no such sequences. {\it A digraph homomorphism} $f : G \too H$ is a map $f : V(G) \too V(H)$ such that $d(a, b) \geq d(fa, fb)$ for any $a, b \in V(G)$. Equivalently, that is a map $f : V(G) \too V(H)$ such that either $fa = fb$ or $(fa, fb) \in E(H)$ for any $(a, b) \in E(G)$. We denote the category of digraphs and digraph homomorphisms by {\sf DGrph}.
\item We have a functor $\iota : {\sf Grph} \too {\sf DGrph}$ defined by $V(\iota G) = V(G)$ and $E(\iota G) = \{(a, b) \mid \{a, b\} \in E(G)\}$, namely by orienting edges in bi-directions. This is a full and faithful functor, and we identify any graphs with their image under $\iota$. 
\end{enumerate}
\subsection{Metric spaces}
\begin{enumerate}
\item {\it A generalized metric space} $(X, d)$ is a set $X$ with a map $d : X\times X \too \R_{\geq 0}\cup \{\infty \}$ satisfying 
\begin{itemize}
\item $d(x, x) = 0$ for any $x \in X$,
\item $d(x, y) + d(y, z) \geq d(x, z)$ for any $x, y, z \in X$.
\end{itemize}
We sometimes denote a generalized metric space $(X, d)$ simply by $X$. For generalized metric spaces $(X, d_X)$ and $(Y, d_Y)$, {\it a $1$-Lipschitz map} $f : X \too Y$ is a map satisfying that $d_X(x, x') \geq d_Y(fx, fx')$ for any $x, x' \in X$. We denote the category of generalized metric spaces and 1-Lipschitz maps by $\GMet$. 
\item A generalized metric space $(X, d)$ is {\it symmetric} if it satisfies that $d(x, y) = d(y, x)$ for any $x, y \in X$. 
\item A generalized metric space $(X, d)$ is {\it non-degenerate} if $d(x, y) = d(y, x) = 0$ implies that $x = y$ for any $x, y \in X$.
\item A generalized metric space $(X, d)$ is {\it a metric space} if it is symmetric, non-degenerate and satisfies that $\infty \not\in {\rm Im}d$. We denote the full subcategory of $\GMet$ that consists of metric spaces and 1-Lipschitz maps by $\Met$. A metric space $X$ is {\it finite} if the set $X$ is finite.
\item For a generalized metric space $(X, d)$ and a point $x \in X$, we define that $B(x, \ell) := \{y \in X \mid d(x, y) \leq \ell\}$ and $B(\ell, x) := \{y \in X \mid d(y, x) \leq \ell\}$.
\end{enumerate}
\section{Magnitude of filtered set enriched categories}\label{mh1}
In this section, we give a definition of the magnitude of categories enriched over filtered sets, and show basic properties after explaining what precisely `filtered sets' we mean. We also give examples of the magnitude from various areas of topology and geometry containing {\it the Poincar\'{e} polynomial} of ranked posets and {\it the growth series} of finitely generated groups. Our definition is basically the same as the original one in \cite{L3}, but slightly broader so that we can deal with a broader class of spaces such as infinite graphs, for example.
\subsection{Filtered sets}

\begin{df}
\begin{enumerate}
\item {\it A $\R_{\geq 0}$-filtered set}, or simply {\it a filtered set} is a set $X$ with subsets $X_{\ell} \subset X$ for any $\ell \in \R_{\geq 0}$ satisfying that $X_{\ell} \subset X_{\ell'}$ for any $\ell \leq \ell'$ and $\bigcup_{\ell \in \R_{\geq 0}}X_{\ell} = X$. We formally define that $X_{\ell} = \emptyset $ for $\ell < 0$.
\item Let $X$ be a filtered set, and let $x \in X$. We define that $\deg x = \ell$ if $x \in X_{\ell} \setminus \bigcup_{\ell' < \ell }X_{\ell'}$. 
\item For filtered sets $X$ and $Y$, {\it a filtered map $f : X \too Y$} is a map satisfying that $fX_{\ell} \subset Y_{\ell}$ for any $\ell \in \R_{\geq 0}$.
\item For filtered sets $X$ and $Y$, their {\it product} $X \times Y$ is defined by $(X\times Y)_{\ell} = \bigcup_{\ell' + \ell'' = \ell}X_{\ell'} \times X_{\ell''}$. Then we have $\deg(x, y) = \deg x + \deg y$ for any $(x, y) \in X\times Y$.
\item For a family of filtered sets $\{X_\lambda\}_{\lambda\in \Lambda}$ indexed by a set $\Lambda$, we define its {\it union} $\bigcup_{\lambda \in \Lambda}X_\lambda$ by  $\left(\bigcup_{\lambda \in \Lambda}X_\lambda\right)_\ell = \bigcup_{\lambda \in \Lambda}(X_\lambda)_\ell$. We sometimes use a notation $\bigcup_{f \in Y}X_f$ for filtered sets $Y$ and $X_f$'s, where we consider $Y$ as a set by forgetting the filtration.
\end{enumerate}
\end{df}
We use the following terminologies.
\begin{df}\label{coll}
\begin{enumerate}
\item A filtered set $X$ is {\it collectable} if $X_{\ell}$ is a finite set for each $\ell \in \R_{\geq 0}$.
\item We denote a one element filtered set $\{\ast \}$ with $\deg \ast = \ell$ by $\ast(\ell)$. We formally define that $\ast(\infty) = \emptyset$.
\end{enumerate}
\end{df}
We denote the category of filtered sets and maps by $\Fset$. The following is straightforward.
\begin{prop}
$(\Fset, \times )$ is a symmetric  monoidal category with the unit object $\ast(0)$.
\end{prop}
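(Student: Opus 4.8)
The plan is to exhibit $(\Fset, \times, \ast(0))$ as a symmetric monoidal category by transporting the standard symmetric monoidal structure of $(\Set, \times, \{\ast\})$ along the forgetful functor $U : \Fset \too \Set$ that discards the filtration. The crucial observation, used throughout, is that the underlying set of $X \times Y$ is exactly the Cartesian product of the underlying sets: indeed $\bigcup_\ell (X \times Y)_\ell = \bigcup_\ell \bigcup_{\ell' + \ell'' = \ell} X_{\ell'}\times Y_{\ell''} = \left(\bigcup_{\ell'}X_{\ell'}\right)\times\left(\bigcup_{\ell''}Y_{\ell''}\right) = X \times Y$. Thus $U$ is strict monoidal, and I will take all of the structure isomorphisms (associator, unitors, symmetry) to be the set-theoretic maps underlying the symmetric monoidal structure on $\Set$; the entire task then reduces to checking that these set maps are \emph{filtered} isomorphisms and that the coherence diagrams commute.

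First I would check that $\times$ is a well-defined bifunctor. On objects, the monotonicity $(X\times Y)_\ell \subset (X\times Y)_m$ for $\ell \leq m$ follows by replacing a witness $x \in X_{\ell'}$ with $\ell' + \ell'' = \ell$ by $x \in X_{\ell' + (m - \ell)}$, and the union axiom is the computation above. On morphisms, for filtered maps $f, g$ the product map $f \times g$ is filtered because a witness $x \in X_{\ell'}, y \in Y_{\ell''}$ for $(x,y) \in (X\times Y)_\ell$ yields $fx \in X'_{\ell'}, gy \in Y'_{\ell''}$, hence $(fx,gy)\in (X'\times Y')_\ell$; functoriality is inherited from $\Set$.

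The heart of the argument is to verify that the structure maps are filtered isomorphisms, and here I would work directly with the level sets rather than with the degree function. For the associator, an element $((x,y),z)$ lies in $((X\times Y)\times Z)_\ell$ if and only if there exist $a', a'', c$ with $a' + a'' + c = \ell$, $x \in X_{a'}$, $y \in Y_{a''}$, $z \in Z_c$; the same condition characterizes membership of $(x,(y,z))$ in $(X\times(Y\times Z))_\ell$, so the canonical bijection carries each level set onto the corresponding one and is a filtered isomorphism with filtered inverse. The symmetry $(x,y)\mapsto(y,x)$ is handled identically by swapping the roles of the two witnesses. For the left unitor, since $(\ast(0))_{\ell'} = \{\ast\}$ for all $\ell' \geq 0$ and monotonicity gives $\bigcup_{\ell'' \leq \ell}X_{\ell''} = X_\ell$, one computes $(\ast(0)\times X)_\ell = \{\ast\}\times X_\ell$, so the projection is a filtered isomorphism; the right unitor is symmetric. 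All of these verifications are the filtration-level reformulation of the additivity $\deg(x,y) = \deg x + \deg y$ recorded in the definition of the product.

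Finally, the naturality of the structure isomorphisms and the coherence axioms (pentagon, triangle, and the hexagon and symmetry identities) are equalities between composites of the above filtered maps. Since $U$ is faithful and sends every one of these morphisms to the corresponding structure morphism of $(\Set,\times,\{\ast\})$, where all coherence and naturality equations are known to hold, each equation $Uf = Ug$ lifts uniquely to $f = g$ in $\Fset$. No genuine obstacle arises; the only point requiring care is the bookkeeping of the witnesses $\ell' + \ell'' = \ell$ in the product filtration, which is exactly what makes the level-set description of the structure isomorphisms transparent.
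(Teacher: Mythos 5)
Your verification is correct. The paper offers no proof of this proposition---it is stated with only the remark ``The following is straightforward''---and your argument (checking bifunctoriality and the structure isomorphisms at the level of the filtration sets, then importing naturality and coherence along the faithful strict monoidal forgetful functor to $\Set$) is exactly the routine check the author intends the reader to supply.
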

Let $[0, \infty]$ be the small category with objects $\R_{\geq 0}\cup \{\infty\}$ and morphisms $[0, \infty](\ell, \ell') = \begin{cases} \{\ast \} & \text{ if } \ell \geq \ell', \\ \emptyset & \text{if } \ell < \ell'.\end{cases}$ We equip $[0, \infty]$ with a symmetric monoidal structure by $+$ with the unit object $0$. We denote the category of sets with obvious symmetric monoidal structure by $\Set$. The following is also straightforward.
\begin{prop}\label{monem}
There is a symmetric monoidal embedding $\Set \too \Fset$ sending a set $X$ to a filtered set $X$ with $\deg x = 0$ for any $x \in X$. There is also a symmetric monoidal embedding $[0, \infty] \too \Fset$ sending $\ell $ to $\ast(\ell)$.
\end{prop}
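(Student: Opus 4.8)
The plan is to write down both functors explicitly—on objects and on morphisms—and then verify in turn that each is a functor, is full and faithful, and is strict symmetric monoidal. Since all of the structure constraints will turn out to be identities, the pentagon, triangle and hexagon coherence axioms will require no separate checking beyond matching them with the coherence data already present on $\Set$ and $[0,\infty]$.

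For the first functor $\iota_{\Set} : \Set \too \Fset$, I would send a set $X$ to the filtered set with underlying set $X$ and $X_\ell = X$ for every $\ell \geq 0$ (equivalently $\deg x = 0$ for all $x$), and send a map $f : X \too Y$ to the same underlying map. The filtration condition $fX_\ell = fX \subset Y = Y_\ell$ holds automatically, so $\iota_{\Set}f$ is a filtered map, and functoriality is immediate because the underlying map is unchanged. For fullness I would note that a filtered map between two degree-$0$ filtered sets is constrained only by the vacuous requirement $fX \subset Y$, hence is an arbitrary set map; faithfulness and injectivity on objects are clear. For the monoidal part, the unit $\{\ast\}$ of $\Set$ goes to the degree-$0$ one-point set $\ast(0)$, which is the unit of $\Fset$; and using $X_{\ell'} = X$, $Y_{\ell''} = Y$ in the defining formula $(X\times Y)_\ell = \bigcup_{\ell'+\ell''=\ell} X_{\ell'} \times Y_{\ell''}$ (take $\ell'=0$, $\ell''=\ell$) one gets $(X\times Y)_\ell = X\times Y$ for all $\ell \geq 0$, which is exactly $\iota_{\Set}(X\times Y)$. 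Thus $\iota_{\Set}$ preserves $\times$ strictly, and since the associativity, unit and symmetry isomorphisms coincide with those of the cartesian structure on $\Set$, it is strict symmetric monoidal.

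For the second functor $\iota_{[0,\infty]} : [0,\infty] \too \Fset$, I would send $\ell$ to $\ast(\ell)$, so that $\infty \mapsto \ast(\infty) = \emptyset$. A morphism $\ell \too \ell'$ exists precisely when $\ell \geq \ell'$, and in that case there is a unique filtered map $\ast(\ell) \too \ast(\ell')$: the point must go to the point, and the condition $(\ast(\ell))_m = \{\ast\} \subset (\ast(\ell'))_m$ for $m \geq \ell$ holds exactly because $\ell \geq \ell'$. Conversely any filtered map $\ast(\ell) \too \ast(\ell')$ forces $\ell \geq \ell'$ by testing $m = \ell$, so the hom-sets correspond as singletons-or-empty; since composition and identities are then automatic, the functor is full and faithful. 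The cases involving $\infty$ reduce to the observation that the only filtered maps into or out of $\emptyset$ are the evident empty maps, matching the order facts $\infty \geq \ell'$ and $\ell \geq \infty \iff \ell = \infty$. For the monoidal structure, $0 \mapsto \ast(0)$ is the unit, and because the product of two one-point filtered sets is again a single point with $\deg(\ast,\ast) = \deg\ast + \deg\ast$, we have $\ast(\ell)\times\ast(\ell') = \ast(\ell+\ell')$, matching $\otimes = +$; the absorbing cases $\ell + \infty = \infty$ correspond to $\ast(\ell)\times\emptyset = \emptyset$. Hence $\iota_{[0,\infty]}$ is likewise a strict symmetric monoidal full embedding.

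The verifications are entirely routine, and I do not expect a genuine obstacle. The one point that warrants care is the bookkeeping around $\ast(\infty) = \emptyset$ in the second functor, where one must confirm that the behaviour of the empty filtered set under hom-sets and under $\times$ stays consistent with the order relation and the addition on $\R_{\geq 0}\cup\{\infty\}$; everything else follows directly from the definition of $\Fset$ and its monoidal product.
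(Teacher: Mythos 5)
Your proof is correct: the paper offers no argument at all (it simply declares the proposition straightforward), and your write-up carries out exactly the routine verification that is being implicitly asserted, including the right care around $\ast(\infty)=\emptyset$ and the strictness of the monoidal comparisons. Nothing to add.
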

We denote the category of $\Fset$-categories by $\Fsetcat$. Note that it follows from \ref{enrichedcat} that $\deg {\rm id}_a = 0$ for any object $a$ of a $\Fset$-category. We also note that the category of $[0, \infty]$-categories  is exactly $\GMet$. By Proposition \ref{monem}, we have the following.
\begin{prop}\label{catem}
There are embeddings $\Cat \too \Fsetcat$ and $\GMet \too \Fsetcat$.
\end{prop}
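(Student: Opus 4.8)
The statement is an instance of functoriality of enrichment (\emph{change of base}), so the plan is to show that each symmetric monoidal embedding produced in Proposition \ref{monem} induces an embedding on the corresponding categories of enriched categories, and then to read off the two claims by identifying the sources as $\Cat = \Set$-categories and $\GMet = [0,\infty]$-categories, the latter being recorded just before the statement.

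First I would recall the general construction. For a strong symmetric monoidal functor $F : (V, \otimes_V, 1_V) \too (W, \otimes_W, 1_W)$, define $F_* : V\text{-}{\sf Cat} \too W\text{-}{\sf Cat}$ on objects by keeping the same object set, setting $(F_* C)(a,b) = F(C(a,b))$, taking composition to be $F$ of the composition of $C$ precomposed with the coherence isomorphism $F(C(a,b)) \otimes_W F(C(b,c)) \cong F(C(a,b) \otimes_V C(b,c))$, and taking the unit to be $1_W \cong F(1_V) \to F(C(a,a))$. On a $V$-functor $G = (G_0, \{G_{a,b}\})$ put $F_* G = (G_0, \{F(G_{a,b})\})$. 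The enriched-functor axioms for $F_* G$ follow from those for $G$ by applying $F$ and using that $F$ respects $\otimes$ and the unit, so $F_*$ is a well-defined functor. In our situation both embeddings are in fact \emph{strict}: the product of two degree-$0$ filtered sets is again concentrated in degree $0$ and equals the cartesian product, and $\ast(\ell) \times \ast(\ell') = \ast(\ell + \ell')$ realizes $+$ on $[0,\infty]$; hence the coherence isomorphisms are identities and $F_*$ is described by the naive formulas above.

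Next I would verify that both functors of Proposition \ref{monem} are injective on objects and fully faithful, so that $F_*$ is an embedding. Injectivity on objects and faithfulness of $F_*$ are immediate from the corresponding properties of $F$. For fullness, given a $W$-functor $H : F_* C \too F_* D$, I would use fullness of $F$ to lift each hom-component $H_{a,b} : F(C(a,b)) \to F(D(H_0 a, H_0 b))$ to a morphism $g_{a,b}$ of $V$, unique by faithfulness; then the $W$-functor axioms for $H$, pulled back along the faithful strong monoidal $F$, yield precisely the $V$-functor axioms for $(H_0, \{g_{a,b}\})$, exhibiting $H = F_* G$. Finally, for the two concrete embeddings I would check the required full faithfulness directly: every set map between degree-$0$ filtered sets is automatically filtered, and there is a (necessarily unique) filtered map $\ast(\ell) \too \ast(\ell')$ exactly when $\ell \geq \ell'$, matching the unique morphism of $[0,\infty]$.

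The main obstacle is the fullness step for the induced functor $F_*$: one must argue that an arbitrary $W$-enriched functor between images of $F_*$ — whose hom-maps a priori live in $W$ and whose object assignment is unconstrained — necessarily descends to a $V$-enriched functor. This is exactly where full faithfulness of $F$ (to lift and uniquely identify the hom-morphisms) together with faithfulness and monoidal compatibility (to transport the composition and unit axioms back across $F$) are both needed; everything else is formal bookkeeping made trivial by strictness of the two embeddings.
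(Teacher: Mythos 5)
Your argument is correct and follows the same route the paper intends: the paper derives Proposition \ref{catem} directly from the symmetric monoidal embeddings of Proposition \ref{monem} via change of base, giving no further details. Your proposal simply spells out the standard verification (strictness of the two embeddings, full faithfulness of the induced functor on enriched categories, and the concrete identifications of $\Set$-functors with ordinary functors and of $[0,\infty]$-functors with $1$-Lipschitz maps) that the paper leaves implicit.
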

In the following, we denote the full subcategory of $\Fset$ that consists of collectable filtered sets by $\CFset$. Note that $\CFset$ is again a symmetric monoidal category. We also denote the full subcategory of $\Fsetcat$ that consists of $\CFset$-categories by $\CFsetcat$.
\begin{rem}
While we define a filtered set by an ``ascending filtration'', we can define it in the descending manner as follows.
\begin{enumerate}
\item {\it A $\R_{\geq 0}$-filtered set} is a set $X$ with subsets $X_{\ell} \subset X$ for any $\ell \in \R_{\geq 0}$ satisfying that $X_0 = X$, $X_{\ell'} \subset X_{\ell}$ for any $\ell \leq \ell'$ and $\bigcap_{\ell \in \R_{\geq 0}}X_{\ell} = \emptyset$. 
\item Let $X$ be a filtered set, and let $x \in X$. We define that $\deg x = \ell$ if $x \in X_{\ell} \setminus \bigcup_{\ell < \ell' }X_{\ell'}$. 
\item For filtered sets $X$ and $Y$, {\it a filtered map $f : X \too Y$} is a map satisfying that $f^{-1}Y_{\ell} \subset X_{\ell}$ for any $\ell \in \R_{\geq 0}$.
\end{enumerate}
The categories of filtered sets by the above two definitions are equivalent, however, it seems better to work with descending one since it has good compatibility with the definitions of normed or filtered rings appearing in \ref{novikov} and \ref{mhashh}. Nevertheless, we adopt the ascending one to give a priority to intuitive understanding of the embeddings $\Cat \too \Fsetcat \longleftarrow \GMet$.
\end{rem}
\subsection{Novikov series ring}\label{novikov}
To define the magnitude, we first recall the definition of {\it Novikov series ring} following \cite{LS}. As explained in \cite{LS}, the magnitude can take any semi-ring as values. However, to extend the definition in a natural way, we fix a standard one below. We remark that we can choose other suitable valuation ring such as $p$-adic integers.
\begin{df}\label{novikovdf}
We say a function $f : \R \too \Q$ is {\it left finite} if the support of $f$ restricted to $\R_{\leq L}$ is a finite set for any $L \in \R$. We give a ``power series like'' expression for a left finite function  $f = \sum_{\ell}f(\ell)q^{\ell}$. {\it The universal Novikov series field} $\Q((q^{\R})) = \{ f : \R \too \Q \mid f \text{ is left finite} \}$ is defined by pointwise addition and the product $f\cdot g(\ell) = \sum_{\ell' + \ell'' = \ell}f(\ell')g(\ell'')$. Let $\Q[[q^{\R}]]$ be its subring that consists of left finite functions $\R_{\geq 0} \too \Q$, called {\it a Novikov series ring}.
\end{df}
Note that $\Q((x^{\R}))$ is a non-Archimedean complete valuation field and $\Q[[q^{\R}]]$ is its valuation ring. In particular, we have $\Q[[q^{\R}]] \cong  \varprojlim \Q[[q^{\R}]]/(q^{\ell})$, where $(q^{\ell}) = \{ f : \R_{\geq 0} \too \Q \mid f \text{ is left finite and its support is }\geq \ell\geq 0 \}$. In the following, we denote $\Q[[q^{\R}]]$ and $(q^\ell)$ by $A$ and $m_{\ell}$ respectively for readability. To simplify discussions, we introduce the notion of {\it normed groups and rings}. See \cite{BGR}, for example.
\begin{df}
\begin{enumerate}
    \item A pair $(M, |-|)$ of an abelian group $M$ and a function $|-| : M \too \R_{\geq 0}$ is {\it a normed group} if it satisfies that 
    \begin{itemize}
    \item $|x| = 0$ if and only if $x = 0$.
    \item $|x - y| \leq |x| + |y|$ for any $x, y \in M$.
    \end{itemize}
    We say that $(M, |-|)$ is {\it complete} if it is complete with respect to the distance function $d(x, y) = |x - y|$.
    \item A pair $(R,|-|)$ of a (possibly non-commutative) ring $R$ and a function $|-| : R \too \R_{\geq 0}$ is {\it a normed ring} if it satisfies that 
    \begin{itemize}
    \item $(R,|-|)$ is a normed group with respect to the additive structure.
    \item $|xy| \leq |x||y|$ for any $x, y \in R$.
    \end{itemize}
    We say that $(R,|-|)$ is {\it complete} or {\it a Banach ring} if it is complete as a normed group.
    \item Let $(M, |-|)$ be a normed group and $(R, |-|)$ be a normed ring. We say that $(M, |-|)$ is a normed (left, right, bi-) $(R, |-|)$-module if $M$ is a (left, right, bi-) $R$-module and satisfies that $|rm|, |mr|\leq |r||m|$ for any $r \in R$ and $m \in M$.
\end{enumerate}
\end{df}
Note that the addition and the multiplication of normed groups and normed rings are continuous with respect to the metric induced from the norm. We also note that the action of a normed module is continuous.
\begin{eg}
\begin{enumerate}
\item For $f \in A$, we define $|f| := \sup\{e^{-\ell} \mid f \in m_\ell\}$, which makes $(A, |-|)$ a complete normed ring.
\item Let $S$ be a set. We equip sets of maps $A^{S} = \{f : S \too A\}$ and $A^{S\times S} = \{f : S \times S \too A\}$ with norms defined by $|f| := \sup_{s \in S} |f(s)|$ and $|f| := \sup_{s, t \in S} |f(s, t)|$ respectively. Then $(A^S, |-|)$ and $(A^{S\times S}, |-|)$ are complete normed groups.
\end{enumerate}
\end{eg}
\begin{df}\label{msa} Let $S$ be a set.
\begin{enumerate}
\item 
 For every $\ell \in \R_{\geq 0}$, we denote the set of functions $f : S \times S \too A/m_{\ell}$ such that 
 \[
 \begin{cases}
 \text{for any } s \in S, f(s, t) \neq 0 \text{ for finitely many }  t \in S, \\
  \text{for any } t \in S, f(s, t) \neq 0 \text{ for finitely many }  s \in S
\end{cases}
\]

by $M_S(A/m_{\ell})$. We equip $M_S(A/m_{\ell})$  with a ring structure by pointwise addition and the product defined by $f\cdot g(s, t) = \sum_uf(s, u)g(u, t)$. Note that we have a projection $M_S(A/m_{\ell'}) \too M_S(A/m_{\ell})$ for any $\ell' \geq \ell$.

\item We define a ring $M_S(A)$ by $M_S(A) = \varprojlim M_S(A/m_{\ell})$. Namely, we have $M_S(A) = \{f : S\times S \too A \mid \pi_\ell f \in M_S(A/m_\ell)\text{ for any } \ell \in \R_{\geq 0}\}$. Here we denote the natural projection $A \too A/m_{\ell}$ by $\pi_{\ell}$.
\end{enumerate}
\end{df}
 We consider $M_S(A)$ as a subspace of $A^{S\times S}$. Note that, for any $f \in A$, the set $\{f + m_\ell \mid \ell \in \R_{\geq 0}\}$ is a  fundamental system of neighborhoods for $f$. Note also that the set $\{f + m_\ell \mid \ell \in \R_{\geq 0}\}$ is a  fundamental system of neighborhoods for any $f \in A^{S\times S}$. Here we denote $m_\ell^{S\times S}$ by $m_\ell$. 
\begin{lem}
The subspace $M_S(A)$ of $A^{S\times S}$ is closed. Hence $(M_S(A), |-|)$ is a complete normed ring, where the norm $|-|$ is induced from $(A^{S\times S}, |-|)$.
\end{lem}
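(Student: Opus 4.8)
The plan is to deduce closedness directly from the description of the fundamental system of neighborhoods recorded just before the statement, and then to obtain completeness and the normed ring structure as formal consequences. The conceptual point is that the condition defining $M_S(A)$, namely $\pi_\ell f \in M_S(A/m_\ell)$ for every $\ell$, is a condition on each truncation $\pi_\ell f$ separately, and the topology of $A^{S\times S}$ is exactly the one detecting agreement of truncations; so membership in $M_S(A)$ is a ``closed'' condition essentially by construction.

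First I would set up the dictionary between the sup-norm on $A^{S\times S}$ and the maps $\pi_\ell$. Writing $m_\ell$ for $m_\ell^{S\times S}$, the set $m_\ell$ is precisely the closed ball $\{h \in A^{S\times S} \mid |h| \leq e^{-\ell}\}$, since $|h| = \sup_{s,t}|h(s,t)| \leq e^{-\ell}$ says exactly that $h(s,t) \in m_\ell$ for all $s, t$. Consequently, for $f, g \in A^{S\times S}$ one has the key equivalence $f - g \in m_\ell$ if and only if $\pi_\ell f = \pi_\ell g$ as functions $S \times S \too A/m_\ell$, because $(f-g)(s,t) \in m_\ell$ is the same as $\pi_\ell\big((f-g)(s,t)\big) = 0$. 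Recall also that the sets $f + m_\ell$ form a fundamental system of neighborhoods of $f$.

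Next, to prove that $M_S(A)$ is closed, I would take $f$ in its closure and verify $\pi_\ell f \in M_S(A/m_\ell)$ for each fixed $\ell \in \R_{\geq 0}$. Since $f + m_\ell$ is a neighborhood of $f$, it meets $M_S(A)$, so there is $g \in M_S(A)$ with $f - g \in m_\ell$; by the equivalence above, $\pi_\ell f = \pi_\ell g$. But $g \in M_S(A)$ gives $\pi_\ell g \in M_S(A/m_\ell)$, and the row- and column-finiteness conditions defining $M_S(A/m_\ell)$ depend only on $\pi_\ell g$, hence $\pi_\ell f = \pi_\ell g \in M_S(A/m_\ell)$. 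As $\ell$ was arbitrary, $f \in \varprojlim M_S(A/m_\ell) = M_S(A)$, so the closure of $M_S(A)$ lies in $M_S(A)$.

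Finally, completeness and the normed ring property follow formally. Since $A^{S\times S}$ is a complete normed group and a closed subset of a complete metric space is complete, and since $M_S(A)$ is a subgroup (differences of matrices with row- and column-finite truncations again have row- and column-finite truncations), the restricted norm makes $(M_S(A), |-|)$ a complete normed group. For submultiplicativity I would use the non-Archimedean estimate in $A$: the sum $\sum_u f(s,u)g(u,t)$ defining $(fg)(s,t)$ is finite modulo each $m_\ell$, so it converges, and the ultrametric inequality yields $|(fg)(s,t)| = |\sum_u f(s,u)g(u,t)| \leq \max_u |f(s,u)||g(u,t)| \leq |f||g|$; taking the supremum over $s, t$ gives $|fg| \leq |f||g|$. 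I expect the only substantive step to be the closedness argument, and within it the one point requiring care is the translation between the ball description of $m_\ell$ in $A^{S\times S}$ and the entrywise truncation $\pi_\ell$; once that correspondence is in place, the rest is bookkeeping.
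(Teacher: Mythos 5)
Your proof is correct and takes essentially the same approach as the paper: the paper shows the complement is open by noting that any $g \in f + m_L$ (with $L > \ell$) satisfies $\pi_\ell g = \pi_\ell f$ and hence inherits the failure of row/column finiteness, while you run the same truncation-agreement observation in contrapositive form to show the closure of $M_S(A)$ is contained in $M_S(A)$. The extra details you supply for completeness and submultiplicativity (which the paper dispatches with ``Hence'') are also correct.
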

\begin{proof}
Let $f \in A^{S\times S} \setminus M_S(A)$. Then there exist an $\ell \in \R_{\geq 0}$ and an $s \in S$ such that $\pi_\ell f(s, t)\neq 0$ for infinitely many $t \in S$ or $\pi_\ell f(t, s)\neq 0$ for infinitely many $t \in S$. If we take $L > \ell$, then any element of $f + m_L$ has the same property. Hence we have $f + m_L \subset A^{S\times S} \setminus M_S(A)$, which implies that $M_S(A)$ is closed. This completes the proof.
\end{proof}
The following is straightforward.
\begin{lem}\label{bimodule}
The normed abelian groups  $A^S$ and $A^{S\times S}$ are  $(M_S(A), |-|)$-bimodule. Here, the left and the right action of $M_S(A)$ on $A^S$ is defined by $fv(s) = \sum_{t\in S}f(s, t)v(t)$ and $vf(s) = \sum_{t\in S}v(t)f(t, s)$ for any $f \in M_S(A), v \in A^S$ and $s, t \in S$.
\end{lem}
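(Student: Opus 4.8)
The plan is to check the two defining conditions of a normed bimodule for each of $A^S$ and $A^{S\times S}$: first that the stated actions are well-defined and satisfy the module axioms, and second the submultiplicativity of the norm. Throughout, the essential feature I would exploit is that the norm on $A$ is non-Archimedean (as $\Q((q^{\R}))$ is a non-Archimedean valuation field), so that an infinite family $(a_i)$ in $A$ is summable precisely when, for every $\ell \in \R_{\geq 0}$, all but finitely many $a_i$ lie in $m_\ell$; in that case the sum converges unconditionally by completeness of $A$, which legitimises every rearrangement and interchange of summation below.

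First I would establish well-definedness. For $f \in M_S(A)$ and $v \in A^S$ the term $f(s,t)v(t)$ lies in $m_\ell$ whenever $f(s,t) \in m_\ell$, since $m_\ell$ is an ideal of $A$. As $f \in M_S(A)$, for each fixed $s$ we have $f(s,t) \in m_\ell$ for all but finitely many $t$; hence the family $(f(s,t)v(t))_{t \in S}$ is summable and $fv(s) = \sum_t f(s,t)v(t)$ defines an element of $A$, so $fv \in A^S$. The right action $vf$ is handled identically using column finiteness. For the bimodule $A^{S\times S}$ the action is the evident matrix multiplication, $fg(s,t) = \sum_u f(s,u)g(u,t)$ and $gf(s,t) = \sum_u g(s,u)f(u,t)$ for $f \in M_S(A)$, $g \in A^{S\times S}$; convergence follows exactly as above, from row finiteness of $f$ for the left action and column finiteness of $f$ for the right action.

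Next I would verify the module axioms. Additivity in each variable is immediate from pointwise addition. Unitality holds because the diagonal function $\dl$ with $\dl(s,s) = 1$ and $\dl(s,t) = 0$ for $s \neq t$ lies in $M_S(A)$ and acts as the identity. Associativity, e.g.\ $(f_1 f_2)v = f_1(f_2 v)$, together with the bimodule compatibility $(fg)f' = f(gf')$, reduces to an identity of the shape
\begin{align*}
\sum_t\Big(\sum_u a(s,u)b(u,t)\Big)c(t) = \sum_u a(s,u)\Big(\sum_t b(u,t)c(t)\Big),
\end{align*}
which is just the interchange of the order of summation in a doubly-indexed summable family, valid precisely because of the unconditional convergence noted above.

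Finally I would prove the norm inequalities. Using that the norm on $A$ is non-Archimedean, for $f \in M_S(A)$ and $v \in A^S$ we have
\begin{align*}
|fv(s)| = \Big|\sum_t f(s,t)v(t)\Big| \leq \sup_{t}|f(s,t)v(t)| \leq \sup_t |f(s,t)|\,|v(t)| \leq |f|\,|v|,
\end{align*}
and taking the supremum over $s$ gives $|fv| \leq |f|\,|v|$; the bound $|vf| \leq |f|\,|v|$ and the analogous bounds on $A^{S\times S}$ follow in exactly the same way. The only genuinely delicate point in the whole argument is the summability of these infinite sums and the attendant interchange of summation; once the non-Archimedean summability criterion is in place, every remaining step is formal, which is why the statement can fairly be called straightforward.
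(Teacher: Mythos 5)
Your proof is correct. The paper offers no argument for this lemma at all (it is simply labelled ``straightforward''), so there is no authorial route to diverge from; your write-up supplies exactly the details one would expect: summability of $\sum_t f(s,t)v(t)$ from the finiteness conditions defining $M_S(A)$ together with $m_\ell$ being an ideal, the ultrametric bound $\bigl|\sum_t a_t\bigr| \leq \sup_t |a_t|$ for the norm inequality, and unconditional convergence for associativity. The one step you state but do not fully verify is that the \emph{doubly}-indexed family $\bigl(f_1(s,u)f_2(u,t)v(t)\bigr)_{(u,t)}$ is itself summable (which is what licenses the interchange of sums): this needs the two-case observation that for all but finitely many $u$ one has $f_1(s,u)\in m_\ell$, killing every $t$ at once, while for each of the remaining finitely many $u$ all but finitely many $t$ give $f_2(u,t)\in m_\ell$. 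That verification is routine and in the spirit of what you wrote, so I would not count it as a gap, but it is worth recording explicitly.
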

When we take $S$ as the set of objects $\Ob C$ for some category $C$, we abbreviate $\Ob C$ as $C$ for the notations $M_S(A), A^{S}, A^{S\times S}$ and so on.
\subsection{Magnitude of enriched categories}
In the following, we give a general framework for the magnitude of enriched categories, which is slightly broader than the original one (\cite{L3}, \cite{LS}), namely we relax the requirement of finiteness of number of objects. This subsection is basically a slight generalization of arguments by Leinster, for example in \cite{L3}. 
\begin{df}\label{defmag}
Let $(V, \otimes, 1)$ be a symmetric monoidal category, and $C$ be a category enriched over $V$.
\begin{enumerate}
\item A map $\# : \Ob V \too A$ is called {\it a size function} if it satisfies the following : 
\begin{enumerate}
\item It holds that $\# u = \# v$ for any $u \cong v \in \Ob V$.
\item It holds that  $\#(1) = 1$ and $\#(u \otimes v) = \#(u)\#(v)$ for any $u, v \in \Ob V$.
\end{enumerate}
\item We say that $C$ is {\it of $\#$-finite type} (or simply {\it of finite type}) if the function $\zeta_C \in  A^{C \times C}$ defined by $\zeta_C(a, b) = \# C(a, b)$ is in $M_C(A)$.
\item We say that $C$ is {\it finite} if it has finitely many objects.
\item Suppose that $C$ is of finite type. A function $k^{\bullet} \in A^C$ is {\it a weighting} if it satisfies that $\zeta_Ck^\bullet  = 1$, where $1 \in A^C ; a \mapsto 1$ for any $a\in \Ob C$. Similarly, a function $k_{\bullet}\in A^C$ is {\it a coweighting} if it satisfies that $k_\bullet\zeta_C = 1$.
\item Suppose that  $C$ is finite. If $C$ has both a weighting and a coweighting, we define {\it the magnitude of} $C$ by $\Mag C = \sum_b k^b = \sum _a k_a$. It is straightforward to check that this is well-defined and does not depend on the choice of a weighting and a coweighting.
\end{enumerate}
\end{df}
Note that $C$ is of finite type if it is finite. The proof of the following is same as that of  Lemma 2.3 of \cite{L2} together with Lemma \ref{bimodule}.
\begin{prop}\label{invertiblecase}
Let $C$ be a $V$-category of of finite type. If $\zeta_C$ is invertible in $M_C(A)$, then it has a unique weighting and a coweighting, and it holds that $k^b = \sum_{a}\zeta^{-1}_{C}(b, a), k_a =  \sum_{b}\zeta^{-1}_{C}(b, a)$. Further, if $C$ is finite, it holds that $\Mag C = \sum_{a, b}\zeta^{-1}_{C}(a, b)$.
\end{prop}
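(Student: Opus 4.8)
The plan is to follow Leinster's finite-matrix argument (Lemma 2.3 of \cite{L2}), upgraded to the Banach algebra $M_C(A)$ acting on the complete normed bimodule $A^C$ as in Lemma \ref{bimodule}. Throughout I write $1 \in A^C$ for the constant function $a \mapsto 1$ appearing on the right-hand side of the (co)weighting equations, and I regard $\zeta_C$ and its inverse $\zeta_C^{-1}$ as elements of $M_C(A)$ acting on $A^C$ on the left and on the right.

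For existence I would simply set $k^\bullet := \zeta_C^{-1} \cdot 1$ via the left action. Associativity of the action together with $\zeta_C \zeta_C^{-1} = \mathrm{id}$ gives $\zeta_C k^\bullet = (\zeta_C \zeta_C^{-1}) \cdot 1 = 1$, so $k^\bullet$ is a weighting, and evaluating the action yields $k^b = \sum_a \zeta_C^{-1}(b,a)$, the stated formula. Symmetrically $k_\bullet := 1 \cdot \zeta_C^{-1}$ is a coweighting with $k_a = \sum_b \zeta_C^{-1}(b,a)$. Uniqueness is then immediate: any two weightings $k^\bullet, l^\bullet$ satisfy $\zeta_C(k^\bullet - l^\bullet) = 0$, and applying $\zeta_C^{-1}$ on the left forces $k^\bullet = l^\bullet$; the coweighting case is the mirror image.

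It remains to treat the magnitude when $C$ is finite. Here the sums defining $\Mag C$ are genuinely finite, so substituting the two formulas gives $\sum_b k^b = \sum_{a,b} \zeta_C^{-1}(b,a) = \sum_{a,b} \zeta_C^{-1}(a,b) = \sum_a k_a$ after relabeling indices, which simultaneously confirms that the two expressions agree and gives the claimed value $\Mag C = \sum_{a,b} \zeta_C^{-1}(a,b)$.

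The one point requiring care beyond Leinster's finite-dimensional setting — and the reason Lemma \ref{bimodule} must be invoked rather than bare matrix algebra — is that for $C$ merely of finite type the expressions $k^b = \sum_a \zeta_C^{-1}(b,a)$ are infinite series. Their convergence in the complete ring $A$ is exactly what the bimodule structure guarantees: since $\zeta_C^{-1} \in M_C(A)$, each of its rows and columns is nonzero modulo $m_\ell$ for only finitely many indices, so the series stabilize modulo every $m_\ell$ and hence converge. Once this is in hand, every remaining step is a formal manipulation inside the associative action.
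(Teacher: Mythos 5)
Your proposal is correct and follows essentially the same route as the paper, which likewise reduces the statement to Leinster's Lemma 2.3 of \cite{L2} and delegates all convergence and interchange-of-summation issues to the bimodule structure of Lemma \ref{bimodule} (the paper's own computation writes out the double sums and justifies swapping them by projecting to $A/m_\ell$, which is exactly the stabilization argument you give at the end). Packaging the existence and uniqueness steps as $k^\bullet = \zeta_C^{-1}\cdot 1$, $k_\bullet = 1\cdot\zeta_C^{-1}$ and associativity of the action is just a cleaner notation for the same manipulations.
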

\subsection{Magnitude of $\CFset$-categories}
\subsubsection{A size function on $\CFset$}
Now we define a size function on $\CFset$ in a sense of Definition \ref{defmag}. Then we can consider the magnitude of finite $\CFset$-categories and the magnitude (co)weighting for ones of finite type. It is straightforward to verify that the following indeed defines a size function.
\begin{df}
For $X \in \CFset$, we define a size function $\#$ by $\# X = \sum_{x \in X}q^{\deg x} \in \Q[[q^{\R}]]$.
\end{df}
In the following, we just say a $\CFset$-category $C$ is {\it finite} or {\it of finite type} if it is so with respect to the above size function. The following is immediate from the definition, but is a useful characterization of $\CFset$-category of finite type. For a morphism $f \in \Mor C$, we denote its domain and codomain by $sf$ and $tf$ respectively.
\begin{lem}\label{finitetype}
A $\CFset$-category $C$ is of finite type if and only if it satisfies one of the following  equivalent conditions : 
\begin{enumerate}
\item For any object $a \in \Ob C$ and $\ell \in \R_{\geq 0}$, there are finitely many morphisms $f \in \Mor C$ with $\deg f \leq \ell$ satisfying that $sf = a$ or $tf = a$.
 \item For any objects $a, b \in \Ob C$, both of the filtered sets $\bigcup_{a \in \Ob C}C(a, b)$ and $\bigcup_{b \in \Ob C}C(a, b)$ are collectable.
\end{enumerate}
\end{lem}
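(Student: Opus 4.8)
The plan is to reduce the statement to an unwinding of the definition of $M_C(A)$ combined with the collectability of the individual hom-sets, after which the equivalence of (1) and (2) becomes a purely set-theoretic observation. First I would record concretely what membership $\zeta_C \in M_C(A)$ means. By Definition \ref{msa} this is equivalent to $\pi_{\ell}\zeta_C \in M_C(A/m_{\ell})$ for every $\ell \in \R_{\geq 0}$. Since $\zeta_C(a,b) = \# C(a,b) = \sum_{f \in C(a,b)} q^{\deg f}$ and passing to $A/m_{\ell}$ kills exactly the monomials $q^{\deg f}$ with $\deg f \geq \ell$, we have $\pi_{\ell}\zeta_C(a,b) \neq 0$ if and only if $C(a,b)$ contains a morphism of degree $< \ell$. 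Thus $\pi_{\ell}\zeta_C \in M_C(A/m_{\ell})$ says precisely that for each fixed $a$ there are finitely many $b$ admitting a morphism $a \too b$ of degree $< \ell$, and dually for each fixed $b$ there are finitely many $a$ admitting such a morphism.

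Next I would translate conditions (1) and (2) into the same currency. Reading $\bigcup_{b} C(a,b)$ as the filtered set of all morphisms with source $a$ and $\bigcup_{a} C(a,b)$ as those with target $b$, collectability (Definition \ref{coll}) of these means exactly that for each $a$ (resp.\ each $b$) and each $\ell$ there are finitely many morphisms of degree $\leq \ell$ with that fixed source (resp.\ target). Hence condition (2) is the conjunction, over all objects and all $\ell$, of the two finiteness statements ``finitely many $f$ with $sf=a$ and $\deg f \leq \ell$'' and ``finitely many $f$ with $tf=a$ and $\deg f \leq \ell$''. Since a union of two sets is finite if and only if both are, this is literally condition (1), so the equivalence $(1)\iff(2)$ is immediate.

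It then remains to identify finite type with (1). The direction $(1)\Rightarrow$ finite type is direct: fixing $a$ and $\ell$, the finitely many morphisms with $sf=a$ and $\deg f \leq \ell$ (which include all those of degree $< \ell$) have only finitely many targets, giving the ``finitely many $b$'' half of $\pi_{\ell}\zeta_C \in M_C(A/m_{\ell})$, and symmetrically for sources. For the converse I would use that $C$ is enriched over $\CFset$, so each hom-set $C(a,b)$ is itself collectable. Given $a$ and $\ell$, pick $L > \ell$, so that $\deg f \leq \ell$ forces $\deg f < L$; membership $\pi_{L}\zeta_C \in M_C(A/m_{L})$ supplies a finite set $B$ of objects such that every $f$ with $sf=a$ and $\deg f \leq \ell$ lies in some $C(a,b)$ with $b \in B$, and for each such $b$ the set $\{f \in C(a,b) : \deg f \leq \ell\} = C(a,b)_{\ell}$ is finite by collectability of $C(a,b)$. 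A finite union of finite sets is finite, which yields the source half of (1); the target half is symmetric.

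The step I would flag as the crux rather than a genuine obstacle is that $M_C(A)$ only detects the \emph{existence} of a low-degree morphism in each hom-set (a nonvanishing condition), not their number. The passage from this existence information to the actual finiteness demanded by (1) is exactly what forces the separate appeal to the $\CFset$-enrichment, carried out through the mild trick of enlarging $\ell$ to some $L > \ell$ so that ``degree $\leq \ell$'' is absorbed into ``degree $< L$''.
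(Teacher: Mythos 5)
Your proof is correct and is exactly the definition-unwinding the paper has in mind -- the paper states the lemma as ``immediate from the definition'' and omits the argument entirely. You rightly isolate the one point that is not purely formal, namely that $\pi_\ell\zeta_C\in M_C(A/m_\ell)$ only records which hom-sets contain a morphism of degree $<\ell$, so the finiteness in (1) additionally requires the collectability of each $C(a,b)$ coming from the $\CFset$-enrichment, handled correctly by your passage from $\ell$ to some $L>\ell$.
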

\subsubsection{Tame categories and an explicit formula for magnitude}
Next we define a large class of $\CFset$-category $C$ of finite type, namely {\it tame categories}, whose magnitude is computable. For $f, g \in \Mor C$, we write $f \sim g$ if $tf = sg$.
\begin{df}\label{tamedf}
Let $C$ be a $\CFset$-category, and let $a, b \in \Ob C$.
\begin{enumerate}
\item  A \textit{ non-degenerate $n$-path on} $C$ {\it from} $a$ {\it to } $b$ is a sequence of morphisms $f_1 \sim  \dots \sim f_n$ such that $sf_1 = a, tf_n=b$ and $f_i \neq {\rm id}_\bullet$ for any $i$. We define {\it the length} of such a sequence $f_1 \sim  \dots \sim f_n$ by $\sum_{i=1}^n \deg f_i$.
\item A \textit{non-degenerate path on} $C$  is a non-degenerate $n$-path from $a$ to $b$ for some $n \geq 1$ and $a, b \in \Ob C$.
\item We say that $C$ is {\it quasi-tame} if it is of finite type, and there are finitely many non-degenerate paths from $a$ to $b$ with length $\leq \ell$ for any $a, b \in \Ob C$ and $\ell \in \R_{\geq 0}$.
\item We say that $C$ is {\it tame} if it is of finite type, and there exists an $N_\ell > 0$ for any $\ell \in \R_{\geq 0}$ such that any non-degenerate $N_\ell$-path have length $> \ell$.
\end{enumerate}
\end{df}
\begin{lem}\label{power}
Let $C$ be a $\CFset$-category of finite type, and let $a, b \in \Ob C$.
\begin{enumerate}
\item The number of non-degenerate $n$-paths from $a$ to $b$ with length $\ell$ is the coefficient of $q^\ell$ in $(\zeta_C - 1)^n(a, b)$.
\item $C$ is quasi-tame if and only if $\sum_{n=0}^\infty (1-\zeta_C)^n(a, b)$ converges in $A$ for any $a, b \in \Ob C$.
\item $C$ is tame if and only if $\sum_{n=0}^\infty (1-\zeta_C)^n$ converges in $M_C(A)$.
\end{enumerate}
\end{lem}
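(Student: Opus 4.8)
The plan is to establish (1) by a direct expansion of the matrix power $(\zeta_C-1)^n$ inside $M_C(A)$, and then to obtain (2) and (3) as combinatorial translations of convergence, using that both $A$ and $M_C(A)$ are complete non-Archimedean normed rings in which a series converges if and only if its terms tend to $0$.

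For (1), since $C$ is of finite type we have $\zeta_C \in M_C(A)$, and as the identity matrix also lies in $M_C(A)$, the power $(\zeta_C-1)^n$ is a well-defined element of $M_C(A)$. The key observation is that $(\zeta_C-1)(a,b)=\sum_{f\in C(a,b),\,f\neq\mathrm{id}}q^{\deg f}$: subtracting the identity matrix removes exactly the single $q^0$ coming from $\mathrm{id}_a$ when $a=b$ and changes nothing when $a\neq b$. I would then expand, using the product rule $f\cdot g(s,t)=\sum_u f(s,u)g(u,t)$,
\[
(\zeta_C-1)^n(a,b)=\sum_{a=c_0,\dots,c_n=b}\ \prod_{i=1}^n(\zeta_C-1)(c_{i-1},c_i),
\]
and substitute the above description of each factor. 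Multiplying out realizes the $(a,b)$-entry as $\sum q^{\,\sum_i\deg f_i}$, the sum ranging over all sequences $f_1\sim\dots\sim f_n$ of composable non-identity morphisms with $sf_1=a$ and $tf_n=b$ — that is, over non-degenerate $n$-paths from $a$ to $b$, the exponent being the length. Hence the coefficient of $q^\ell$ is the number of such paths of length $\ell$. The one point needing care is the legitimacy of the expansion: I would check it modulo $m_{\ell'}$ for each $\ell'$, where the row/column finiteness built into $M_C(A/m_{\ell'})$ makes every intermediate sum finite, and then pass to the limit $A=\varprojlim A/m_{\ell'}$; finiteness of the count for fixed $\ell$ follows from Lemma \ref{finitetype}, since each $f_i$ then satisfies $\deg f_i\le\ell$.

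For (2) and (3) I would note that $|(1-\zeta_C)^n(a,b)|=|(\zeta_C-1)^n(a,b)|$, so signs are irrelevant, and by (1) the entry $(\zeta_C-1)^n(a,b)$ lies in $m_L$ exactly when there is no non-degenerate $n$-path from $a$ to $b$ of length $<L$. Thus $\sum_n(1-\zeta_C)^n(a,b)$ converges in $A$ if and only if for every $L$ there is, for all large $n$, no non-degenerate $n$-path from $a$ to $b$ of length $<L$. Since the norm on $M_C(A)$ is the supremum over entries, $\sum_n(1-\zeta_C)^n$ converges in $M_C(A)$ if and only if for every $L$, for all large $n$, every non-degenerate $n$-path (over all endpoints) has length $\ge L$; the uniformity over endpoints is precisely what separates (3) from (2).

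It then remains to match these ``for all large $n$'' conditions with the definitions. For the direction from (quasi-)tameness to convergence I would use the monotonicity that any initial segment $f_1\sim\dots\sim f_m$ of a non-degenerate path is again a non-degenerate path of no greater length, so once every $N_\ell$-path has length $>\ell$ the same holds for every $n$-path with $n\ge N_\ell$. For the converse I would take $L=\ell+1$: convergence produces an $N$ beyond which no $n$-path has length $<\ell+1$, so every non-degenerate path of length $\le\ell$ has fewer than $N$ morphisms, and Lemma \ref{finitetype} gives finitely many $n$-paths of length $\le\ell$ for each fixed $n<N$; collecting these yields finitely many such paths (quasi-tame, in the pointwise case) or the uniform bound $N_\ell=N$ (tame, in the uniform case). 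The main obstacle — essentially the only subtle point — is to keep these two finiteness regimes straight: the entrywise convergence in $A$ matches the pointwise condition of quasi-tameness, while the norm convergence in $M_C(A)$ matches the uniform threshold $N_\ell$ of tameness, and one must invoke finite type to turn a bound on the number of morphisms into finiteness of the set of paths.
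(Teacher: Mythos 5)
Your proof is correct and follows essentially the same route as the paper: part (1) by expanding the matrix power entrywise over composable sequences of non-identity morphisms, and parts (2) and (3) by the standard fact that in the complete non-Archimedean rings $A$ and $M_C(A)$ a series converges iff its terms tend to $0$, matched against the pointwise versus uniform (sup-norm) formulations of quasi-tameness and tameness. The paper's proof is just a terse version of this; your additional care with the initial-segment monotonicity, the role of Lemma \ref{finitetype}, and the $\varprojlim$ justification of the expansion fills in details the paper declares obvious.
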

\begin{proof}
(1) is obvious. In general, $\sum_{n = 0}^{\infty} f_n$ converges if and only if $f_n$ converges to $0$ for any $(f_n)_n \subset A$ or $(f_n)_n \subset M_S(A)$ since these two metric spaces are complete. Hence (2) and (3) follows.
\end{proof}
\begin{lem}
A $\CFset$-category $C$ of finite type is quasi-tame if it is tame. Further, the converse is true if $C$ is finite.
\end{lem}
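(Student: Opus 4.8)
The plan is to reduce the entire statement to controlling a single combinatorial quantity: for fixed objects $a, b$ and a bound $\ell \in \R_{\geq 0}$, how the number $n$ of morphisms in a non-degenerate $n$-path from $a$ to $b$ of length $\leq \ell$ can behave. Both tameness and quasi-tameness constrain non-degenerate paths of bounded length, and the bridge between them is Lemma \ref{power}(1), which identifies the number of non-degenerate $n$-paths from $a$ to $b$ of length exactly $\ell'$ with the coefficient of $q^{\ell'}$ in $(\zeta_C - 1)^n(a, b) \in A$. Since $C$ is of finite type, $\zeta_C \in M_C(A)$, hence $(\zeta_C - 1)^n \in M_C(A)$ and each entry $(\zeta_C - 1)^n(a,b)$ is left finite; its support in $\R_{\leq \ell}$ is therefore finite and each coefficient is a finite non-negative integer. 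Consequently, for each fixed $n$ there are only finitely many non-degenerate $n$-paths from $a$ to $b$ of length $\leq \ell$. This observation will do the finiteness bookkeeping in both directions.

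For the forward implication (tame $\Rightarrow$ quasi-tame), I would first use tameness to bound the number of morphisms. Fix $\ell$ and let $N_\ell$ be as in Definition \ref{tamedf}(4). I claim any non-degenerate path of length $\leq \ell$ uses fewer than $N_\ell$ morphisms: if $f_1 \sim \dots \sim f_n$ had $n \geq N_\ell$, its initial segment $f_1 \sim \dots \sim f_{N_\ell}$ would be a non-degenerate $N_\ell$-path whose length $\sum_{i=1}^{N_\ell}\deg f_i$ is at most the total length $\leq \ell$, contradicting tameness. Thus every non-degenerate path from $a$ to $b$ of length $\leq \ell$ has $n \in \{1, \dots, N_\ell - 1\}$, and by the first paragraph there are finitely many for each such $n$; summing over the finitely many values of $n$ yields quasi-tameness.

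For the converse under the hypothesis that $C$ is finite, I would argue via the number of morphisms. Quasi-tameness gives, for each pair $(a,b)$ and each $\ell$, finitely many non-degenerate paths from $a$ to $b$ of length $\leq \ell$; since $\Ob C$ is finite there are finitely many pairs, so the set $P_\ell$ of all non-degenerate paths of length $\leq \ell$ is finite. Hence the set of integers occurring as the number of morphisms of a path in $P_\ell$ is finite; let $M$ be its maximum and set $N_\ell = M + 1$. Any non-degenerate $N_\ell$-path has exactly $M+1$ morphisms, so it cannot lie in $P_\ell$ and therefore has length $> \ell$, which is precisely Definition \ref{tamedf}(4).

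The main obstacle is conceptual rather than computational: isolating why finiteness of $C$ is genuinely needed for the converse. Quasi-tameness only controls paths after fixing source and target, so it bounds the number of morphisms of a length-$\leq \ell$ path only per object-pair; extracting a uniform bound $N_\ell$ valid for all non-degenerate paths requires finitely many object-pairs, which is exactly where the finiteness of $C$ enters. The one delicate point in the forward direction is the truncation step, where I rely on the fact that lengths are sums of non-negative degrees in $\R_{\geq 0}$, so an initial segment never has length exceeding that of the whole path.
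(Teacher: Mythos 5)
Your proof is correct, but it takes a more hands-on route than the paper, which simply writes ``It follows from Lemma \ref{power}.'' The intended argument there is purely topological: by Lemma \ref{power}(3), tameness is convergence of $\sum_{n}(1-\zeta_C)^n$ in $M_C(A)$, whose norm is the supremum of the entrywise norms, so convergence there forces convergence of every entry $\sum_n(1-\zeta_C)^n(a,b)$ in $A$, i.e.\ quasi-tameness by Lemma \ref{power}(2); and when $\Ob C$ is finite the supremum is over finitely many entries, so entrywise convergence conversely gives convergence in $M_C(A)$. You instead work directly with the path-counting definitions, using only Lemma \ref{power}(1) for the finiteness bookkeeping: the truncation argument (an initial segment of a non-degenerate path has length at most that of the whole path, since degrees are non-negative) shows tameness bounds the number of morphisms in any length-$\leq\ell$ path by $N_\ell-1$, and finiteness of $\Ob C$ is what upgrades the per-pair bound coming from quasi-tameness to the uniform bound $N_\ell$. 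Both arguments are sound and rest on the same underlying dichotomy --- tameness is a condition uniform over object pairs, quasi-tameness is pointwise --- which the paper encodes in the sup-norm on $M_C(A)$ and you make explicit combinatorially. Your version is more elementary and self-contained (it does not need completeness of $A$ or $M_C(A)$, nor parts (2) and (3) of Lemma \ref{power}); the paper's is shorter given the machinery already set up. Your closing remarks correctly identify the two delicate points: where finiteness of $C$ is genuinely used, and why the truncation step is legitimate.
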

\begin{proof}
It follows from Lemma \ref{power}.
\end{proof}
The following is a list of tame categories. They are discussed in the next subsection in detail, including the proof of tameness.
\begin{eg}
\begin{enumerate}
\item Any finite metric space is a finite tame category.
\item Any (possibly infinite) digraph is a tame category if it is locally finite, namely, if any vertex is an endpoint of finitely many edges.
\item Any finite category is a tame category if and only if it is skeletal and has no non-trivial endomorphisms. In particular, any finite poset is a finite tame category.
\end{enumerate}
\end{eg}
We have the following criterion for tameness.
\begin{lem}\label{uniform}
Let $C$ be a $\CFset$-category of finite type. If there exists an $\varepsilon > 0$ such that any non-identity morphism $f \in \Mor C$ satisfies that $\deg f \geq \varepsilon$, then $C$ is tame.
\end{lem}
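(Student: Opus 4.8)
The plan is to unwind the definition of tameness (Definition \ref{tamedf}(4)) and observe that the uniform lower bound $\varepsilon$ on the degrees of non-identity morphisms immediately controls the length of every non-degenerate path in terms of the number of its edges. Since $C$ is already assumed to be of finite type, it remains only to produce, for each $\ell \in \R_{\geq 0}$, a number $N_\ell > 0$ such that every non-degenerate $N_\ell$-path on $C$ has length $> \ell$.

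First I would record the key inequality. A non-degenerate $n$-path is a sequence $f_1 \sim \dots \sim f_n$ with $f_i \neq {\rm id}_\bullet$ for every $i$, so by the hypothesis each factor satisfies $\deg f_i \geq \varepsilon$. Consequently its length is bounded below by
\[
\sum_{i=1}^n \deg f_i \;\geq\; n\varepsilon .
\]
Thus the length of any non-degenerate path grows at least linearly in the number of its edges, uniformly over all pairs $a, b \in \Ob C$.

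Given $\ell \in \R_{\geq 0}$, I would then choose $N_\ell$ to be any integer with $N_\ell > \ell/\varepsilon$, for instance $N_\ell = \lfloor \ell/\varepsilon \rfloor + 1$. For such $N_\ell$, the inequality above gives that every non-degenerate $N_\ell$-path has length $\geq N_\ell \varepsilon > \ell$, which is precisely the required condition. Combined with the standing finite-type assumption, this verifies all clauses of the definition and shows that $C$ is tame.

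Since the whole argument is a direct consequence of the definitions, I do not expect any genuine obstacle here; the only point worth care is that the finite-type hypothesis must be carried along unchanged so that the \emph{full} definition of tameness is met, not merely the path-length condition. The real content is simply the observation that a uniform positive lower bound on edge degrees prevents non-degenerate paths of many edges from having small length.
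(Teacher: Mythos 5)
Your proof is correct. It differs from the paper's only in packaging: the paper observes that the hypothesis forces $(\zeta_C - 1) \in m_\varepsilon$ and then invokes the equivalence of Lemma \ref{power}(3), namely that $C$ is tame if and only if $\sum_{n=0}^\infty (1-\zeta_C)^n$ converges in $M_C(A)$, whereas you verify Definition \ref{tamedf}(4) directly by exhibiting $N_\ell = \lfloor \ell/\varepsilon \rfloor + 1$. The underlying content is identical --- the statement $(1-\zeta_C)^n \in m_{n\varepsilon}$ that drives the paper's convergence argument is exactly your inequality that every non-degenerate $n$-path has length at least $n\varepsilon$, via Lemma \ref{power}(1). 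Your route is self-contained and avoids relying on the completeness of $M_C(A)$ and the convergence criterion, at the cost of not connecting the result to the power-series formula for $\zeta_C^{-1}$ that the paper exploits immediately afterwards in Proposition \ref{muformula2}. You are also right to flag that the finite-type hypothesis must be carried along, since it is part of the definition of tameness and is not implied by the degree bound alone.
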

\begin{proof}
The existence of such an $\varepsilon > 0$ guarantees that $(\zeta_C -1) \in m_\varepsilon$. Hence it follows from Lemma \ref{power} (3). This completes the proof.
\end{proof}
We call a $\CFset$-category satisfying the condition of Lemma \ref{uniform} {\it uniform}.
The following gives us an explicit formula of $\zeta^{-1}_C$ for tame categories.
\begin{prop}\label{muformula2}
Let $C$ be a tame category. Then we have 
\[
\zeta_{C}^{-1}(a, b) = \delta(a, b) + \sum_{k = 1}^{\infty}(-1)^k\sum_{\substack{f_1\sim \dots \sim f_k,\\ f_i\neq {\rm id}_\bullet,\\ sf_1 = a, tf_k=b}}q^{\sum_{i=1}^k \deg f_i},
\]
for any $a, b \in \Ob C$.
\end{prop}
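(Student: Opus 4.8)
The plan is to recognize the right-hand side as the Neumann (geometric) series expansion of $\zeta_C^{-1}$ inside the complete normed ring $M_C(A)$. First I would write $\zeta_C = 1 - (1 - \zeta_C)$ and set $N := 1 - \zeta_C \in M_C(A)$, so that formally $\zeta_C^{-1} = (1 - N)^{-1} = \sum_{n=0}^\infty N^n = \sum_{n=0}^\infty (-1)^n(\zeta_C - 1)^n$. The whole argument then reduces to showing that this series genuinely represents a two-sided inverse and to identifying its entries.

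Next I would justify convergence and invertibility. Since $C$ is tame, Lemma \ref{power}(3) guarantees that $\sum_{n=0}^\infty N^n$ converges in $M_C(A)$; in particular $N^n \to 0$. Because $M_C(A)$ is a complete normed ring, multiplication is continuous, so $(1 - N)\sum_{n=0}^\infty N^n = \lim_{m\to\infty}(1 - N)\sum_{n=0}^m N^n = \lim_{m\to\infty}(1 - N^{m+1}) = 1$, and symmetrically on the other side. This shows that $\zeta_C$ is invertible in $M_C(A)$ with $\zeta_C^{-1} = \sum_{n=0}^\infty (-1)^n (\zeta_C - 1)^n$.

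Finally I would read off the entries. The $n=0$ term is the identity of $M_C(A)$, contributing $\delta(a,b)$. For $n \geq 1$, Lemma \ref{power}(1) identifies the coefficient of $q^\ell$ in $(\zeta_C - 1)^n(a,b)$ with the number of non-degenerate $n$-paths from $a$ to $b$ of length $\ell$, so that $(\zeta_C - 1)^n(a,b)$ equals the sum of $q^{\sum_i \deg f_i}$ over all non-degenerate $n$-paths $f_1 \sim \dots \sim f_n$ with $sf_1 = a$ and $tf_n = b$. Substituting this into the series and absorbing the sign $(-1)^n$ yields exactly the claimed formula.

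The main, and essentially only, obstacle is the middle step: making sure the convergent series is a genuine inverse rather than a merely formal one. This rests on the completeness of $M_C(A)$ together with the continuity of the ring multiplication, which ensures both that the partial products telescope as above and that the error term $N^{m+1}$ vanishes in the limit. Everything else is bookkeeping that follows directly from the combinatorial interpretation in Lemma \ref{power}.
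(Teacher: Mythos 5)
Your proposal is correct and follows essentially the same route as the paper: identify the right-hand side with $\sum_{n=0}^\infty (1-\zeta_C)^n(a,b)$ via Lemma \ref{power}, use tameness (Lemma \ref{power}(3)) for convergence in the complete normed ring $M_C(A)$, and conclude by continuity of multiplication that the series is a two-sided inverse of $\zeta_C = 1-(1-\zeta_C)$. The only cosmetic difference is that you telescope partial sums explicitly while the paper distributes over the infinite sum directly; both rest on $M_C(A)$ being a topological ring.
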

\begin{proof}
 Note that the RHS is equal to $\left(\sum_{n=0}^\infty (1-\zeta_C)^n\right)(a, b)$, which converges by Lemma \ref{power}. Since $M_C(A)$ is a topological ring, we have
 \begin{align*}
 \zeta_C \sum_{n=0}^\infty (1-\zeta_C)^n &= (1 - (1 - \zeta_C))\sum_{n=0}^\infty (1-\zeta_C)^n \\
 &= \sum_{n=0}^\infty (1-\zeta_C)^n - \left(\sum_{n=0}^\infty (1-\zeta_C)^n - 1\right) \\
 &= 1.
 \end{align*}
 This completes the proof.
\end{proof}
The following is immediate from Proposition \ref{invertiblecase} and Proposition \ref{muformula2}.
\begin{cor}\label{magform}
Let $C$ be a tame category $C$. Then we have a weighting and a coweighting
\[
k^a = 1 + \sum_{k = 1}^{\infty}(-1)^k\sum_{\substack{f_1\sim \dots \sim f_k,\\ f_i\neq {\rm id}_\bullet, sf_1 = a}}q^{\sum_{i=1}^k \deg f_i},
\]
and 
\[
k_b = 1 + \sum_{k = 1}^{\infty}(-1)^k\sum_{\substack{f_1\sim \dots \sim f_k,\\ f_i\neq {\rm id}_\bullet, tf_k = b}}q^{\sum_{i=1}^k \deg f_i},
\]
for any $a, b \in \Ob C$. Further, if $C$ is finite, it holds that
\[
\Mag C = \# \Ob C + \sum_{k = 1}^{\infty}(-1)^k\sum_{\substack{f_1\sim \dots \sim f_k,\\ f_i\neq {\rm id}_\bullet}}q^{\sum_{i=1}^k \deg f_i}.
\]
\end{cor}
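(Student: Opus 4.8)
The plan is to read off the three formulas by feeding the closed form for $\zeta_C^{-1}$ of Proposition \ref{muformula2} into the entrywise expressions of Proposition \ref{invertiblecase}. Since $C$ is tame it is of finite type, and Proposition \ref{muformula2} exhibits the inverse $\zeta_C^{-1}$ of $\zeta_C$ in $M_C(A)$, so $\zeta_C$ is invertible and Proposition \ref{invertiblecase} applies. The latter gives $k^a = \sum_b \zeta_C^{-1}(a, b)$, $k_b = \sum_a \zeta_C^{-1}(a, b)$, and, when $C$ is finite, $\Mag C = \sum_{a, b}\zeta_C^{-1}(a, b)$; each of these sums converges in $A$ because, modulo $m_\ell$, the element $\zeta_C^{-1} \in M_C(A)$ has only finitely many nonzero entries in each row and column.

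Substituting
\[
\zeta_{C}^{-1}(a, b) = \delta(a, b) + \sum_{k = 1}^{\infty}(-1)^k\sum_{\substack{f_1\sim \dots \sim f_k,\\ f_i\neq {\rm id}_\bullet,\\ sf_1 = a, tf_k=b}}q^{\sum_{i=1}^k \deg f_i}
\]
into these expressions, the argument reduces to collapsing the outer sum over the free endpoint. For the weighting, $\sum_b \delta(a, b) = 1$ and summing over $b$ simply drops the constraint $tf_k = b$, so the surviving double sum ranges over all non-degenerate paths with $sf_1 = a$ and arbitrary terminal object, which is exactly the stated formula for $k^a$. The coweighting is obtained symmetrically by summing the first argument over $a$, which drops the constraint $sf_1 = a$. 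For $\Mag C$ one sums over both endpoints and uses $\sum_{a, b}\delta(a, b) = \# \Ob C$, each object contributing a single length-zero term of weight $q^0 = 1$.

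The only step that requires care is the interchange of the outer sum over the free endpoint with the infinite sum over the number of factors $k$. I would justify this by projecting to $A/m_\ell$ for each fixed $\ell \in \R_{\geq 0}$: tameness provides an $N_\ell$ bounding the number of factors in any non-degenerate path of length $\leq \ell$, while Lemma \ref{finitetype} ensures that, for a fixed object, only finitely many morphisms of degree $\leq \ell$ start (resp.\ end) there. Hence modulo $m_\ell$ every sum in sight is finite and the rearrangement is unambiguous; passing to the limit in $A = \varprojlim A/m_\ell$ then transports the identities back to $A$. I expect this convergence bookkeeping to be the main, and essentially only, obstacle, after which the three formulas follow by inspection, so the corollary is indeed immediate from the two cited propositions.
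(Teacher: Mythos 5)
Your proposal is correct and follows exactly the route the paper takes: the paper states the corollary is immediate from Propositions \ref{invertiblecase} and \ref{muformula2}, which is precisely your substitution of the explicit formula for $\zeta_C^{-1}$ into the row/column/total sums. Your additional bookkeeping on the interchange of sums (working modulo $m_\ell$ and using tameness plus Lemma \ref{finitetype}) is a sound justification of a step the paper leaves implicit.
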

\begin{rem}\label{direct}
We can prove Proposition \ref{muformula2} for any quasi-tame categories by a direct calculation as follows. We put the right hand side $m(a, b) \in A$. Now we have 
\begin{align*}
&\sum_b\zeta_C(a, b)m(b, c) \\
&= \zeta_C(a, a)m(a, c) + \sum_{b\neq a}\zeta_C(a, b)\delta(b, c) + \sum_{b\neq a}\sum_{k = 1}^{\infty}(-1)^k\zeta_C(a, b)\sum_{\substack{f_1\sim \dots \sim f_k,\\ f_i\neq {\rm id}_\bullet,\\ sf_1 = b, tf_k=c}}q^{\sum_{i=1}^k \deg f_i} \\
&= \zeta_C(a, a)m(a, c) + \zeta_C(a, c)(1-\delta(a, c)) + (-m(a, c)+\delta(a, c) - (\zeta_C(a, c) - \delta(a, c))) \\
&-(\zeta_C(a, a) - 1)\sum_{k = 1}^{\infty}(-1)^k\sum_{\substack{f_1\sim \dots \sim f_k,\\ f_i\neq {\rm id}_\bullet,\\ sf_1 = a, tf_k=c}}q^{\sum_{i=1}^k \deg f_i} \\
&= \zeta_C(a, a)m(a, c) + \zeta_C(a, c)(1-\delta(a, c)) + (-m(a, c)+\delta(a, c) - (\zeta_C(a, c) - \delta(a, c))) \\
&-(\zeta_C(a, a) - 1)(m(a, c) - \delta(a, c)) \\
&= -\zeta_C(a, c)\delta(a, c) + \delta(a, c) + \zeta_C(a, a)\delta(a, c) \\
&= \delta(a, c).
\end{align*}
Note that the function $m : \Ob C \times \Ob C \too A$ is in $M_C(A)$. If not, there should be an object $a \in \Ob C$ and $\ell \in \R_{\geq 0}$ such that there are infinitely objects $b_\lambda \in \Ob C$ and non-degenerate paths from $a$ to $b_\lambda$'s or from $b_\lambda$'s to $a$ with length $\leq \ell$. It implies that there are infinitely many morphisms $f \in \Mor C$ with $sf = a$ or $tf = a$ with $\deg f \leq \ell$, which contradicts the assumption that $C$ is of finite type.
\end{rem}
\subsection{Examples}\label{eg}
In the following, we show examples of quasi-tame or tame categories and their magnitude or (co)weighting. Some of them are examples of the magnitude or Euler characteristic in the sense of the original definitions in \cite{L2} or \cite{L3}. However, we need to extend the definition to include \ref{growth}.
\subsubsection{Magnitude of generalized metric spaces}\label{mgms}
As stated in Proposition \ref{catem}, any generalized metric spaces, in particular any metric spaces are $\Fset$-categories. Since hom-objects of those consist of one element filtered sets $\ast(\ell)$, they are also $\CFset$-categories. We have the following characterizations.
\begin{lem}\label{mettame}
\begin{enumerate}
\item A generalized metric space $X$ is of finite type as a $\CFset$-category if and only if the closed balls $B(x, \ell)$ and $B(\ell, x)$ are finite sets for any $x \in X$ and $\ell \in \R_{\geq 0}$.
\item A generalized metric space $X$ of finite type is quasi-tame if and only if it is non-degenerate.
\item A generalized metric space $X$ of finite type is tame if  there exists an $\varepsilon > 0$ such that $d(x, y) > \varepsilon$ for any $x \neq  y \in X$.
\end{enumerate}
\end{lem}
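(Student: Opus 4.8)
The plan is to translate each categorical condition into a statement about the metric $d$, using the dictionary valid for a generalized metric space $X$ viewed as a $\CFset$-category: one has $X(x,y) = \ast(d(x,y))$, so $\zeta_X(x,y) = q^{d(x,y)}$ (with the convention $q^{\infty} = 0$); the unique element of $X(x,x) = \ast(0)$ is ${\rm id}_x$, whence a morphism $f$ is a non-identity morphism exactly when $sf \neq tf$, and in that case $\deg f = d(sf, tf)$. For (1) I would then invoke Lemma \ref{finitetype}(1). The morphisms $f$ with $sf = x$ and $\deg f \leq \ell$ are in bijection with the points $y$ satisfying $d(x,y) \leq \ell$, that is, with $B(x,\ell)$, and dually those with $tf = x$ and $\deg f \leq \ell$ correspond to $B(\ell, x)$. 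Hence the finiteness condition of Lemma \ref{finitetype}(1) holds for every $x$ and $\ell$ if and only if each ball $B(x,\ell)$ and $B(\ell,x)$ is finite, which is exactly (1).

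For (2) I assume throughout that $X$ is of finite type. The implication ``quasi-tame $\Rightarrow$ non-degenerate'' I would prove by contraposition: if $x \neq y$ with $d(x,y) = d(y,x) = 0$, then the oscillating sequences $x, y, x, y, \dots$ are non-degenerate paths (consecutive vertices always differ) of length $0$, producing infinitely many non-degenerate paths from $x$ of length $\leq \ell$, so $X$ fails to be quasi-tame. For the converse I fix $a, b \in \Ob X$ and $\ell \geq 0$ and consider a non-degenerate path $a = x_0, x_1, \dots, x_n = b$ of length $\leq \ell$. The triangle inequality gives $d(a, x_i) \leq \sum_{j=1}^{i} d(x_{j-1}, x_j) \leq \ell$, so every vertex of the path lies in the finite set $S := B(a, \ell)$. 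What remains is to bound the number of such paths, and this is the step I expect to be the main obstacle, since steps of length $0$ contribute nothing to the total length and so cannot be controlled by the finiteness of $S$ alone.

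To resolve this, I would separate the steps into those of positive length and those of length $0$. Because $S$ is finite, the positive finite distances occurring among pairs of points of $S$ have a positive minimum $\delta$, so any path of length $\leq \ell$ contains at most $\ell/\delta$ positive-length steps. The crucial observation is that non-degeneracy forces the \emph{zero-distance digraph} on $S$ --- with an edge $x \to y$ whenever $x \neq y$ and $d(x,y) = 0$ --- to be acyclic: a directed cycle $x_0 \to \cdots \to x_k = x_0$ would, by the triangle inequality, yield $d(x_0, x_1) = d(x_1, x_0) = 0$ with $x_0 \neq x_1$, contradicting non-degeneracy. In an acyclic digraph on $S$ every directed walk is a simple path, so each maximal run of consecutive zero-length steps has at most $|S| - 1$ steps. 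Combining the two bounds caps the total number of steps $n$, and since all vertices lie in the fixed finite set $S$ there are only finitely many such paths; hence $X$ is quasi-tame.

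Finally, (3) is immediate from Lemma \ref{uniform}: the hypothesis that $d(x,y) > \varepsilon$ for all $x \neq y$ says precisely that every non-identity morphism $f$ satisfies $\deg f = d(sf, tf) \geq \varepsilon$, so $X$ is uniform and therefore tame.
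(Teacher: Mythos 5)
Your proof is correct and follows essentially the same route as the paper: (1) and (3) reduce to Lemmas \ref{finitetype} and \ref{uniform} exactly as in the text, and (2) rests on the same ingredients --- finiteness of $B(a,\ell)$, the minimum positive distance among points of that ball, and the fact that non-degeneracy forbids cycling through zero-distance steps. The only difference is presentational: the paper proves the hard direction of (2) by contraposition and leaves the pigeonhole/acyclicity bookkeeping implicit, whereas you argue directly and spell out why runs of zero-length steps are bounded.
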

\begin{proof}
\begin{enumerate}
\item It is obvious from Lemma \ref{finitetype}. 
\item If $X$ is not non-degenerate, namely there exists points $x, y \in X$ with $d(x, y) = d(y, x) = 0$ and $ x\neq y$, then $X$ is not quasi-tame by the definition. If $X$ is not quasi-tame, conversely, there is an infinite family $\{(x, x^{\lambda}_1, \dots, x^{\lambda}_{n_\lambda}, y)\}_{\lambda}$ of non-degenerate paths with the length $\leq \ell$ for some $x, y \in X$ and $\ell \in \R_{\geq 0}$. Since $X$ is of finite type and the set $\{x^{\lambda}_i\}_{\lambda, i}$ is a subset of $B(x, \ell)$, it is finite. Hence there should be an infinite sequence $\{a_j\}_j$ with $a_j \in \{x^{\lambda}_i\}_{\lambda, i}, a_j \neq a_{j+1}$ and $d(a_j, a_{j+1}) = 0$. It implies that there is a pair of points $(x^{\lambda_0}_i, x^{\lambda_1}_j)$ with $d(x^{\lambda_0}_i, x^{\lambda_1}_j) = d(x^{\lambda_0}_i, x^{\lambda_1}_j) = 0$ and $x^{\lambda_0}_i\neq  x^{\lambda_1}_j$, hence $X$ is not non-degenerate. 
\item It follows from Lemma \ref{uniform}.
\end{enumerate}
This completes the proof.
\end{proof}
By the above lemma, any finite metric spaces are of finite type and tame. We note that a metric space is a finite $\CFset$-category if and only if it is a finite metric space. Hence we obtain the following.
\begin{cor}\label{mettamecor}
\begin{enumerate}
\item A generalized metric space is a finite $\CFset$-category if and only if it is a finite generalized metric space.
\item A generalized metric space is a finite tame category if and only if it is a finite non-degenerate generalized metric space.
\item A metric space is a finite tame category if and only if it is a finite metric space. 
\end{enumerate}
\end{cor}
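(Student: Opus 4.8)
The plan is to unwind the definitions and assemble the equivalences already recorded in Lemma \ref{mettame} together with the fact that tameness and quasi-tameness coincide for finite $\CFset$-categories. Recall that for a generalized metric space $X$ viewed as a $\CFset$-category, the object set is exactly the underlying point set $X$, while each hom-object $X(x,y)$ is the one-point filtered set $\ast(d(x,y))$ with the convention $\ast(\infty)=\emptyset$; since one-point filtered sets are collectable, every generalized metric space is automatically a $\CFset$-category.

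For (1), a $\CFset$-category is \emph{finite} precisely when it has finitely many objects (Definition \ref{defmag}). As the objects of $X$ are its points, $X$ is a finite $\CFset$-category if and only if the set $X$ is finite, which is by definition a finite generalized metric space. I would also record here the elementary observation, used repeatedly below, that a finite generalized metric space is of finite type: when $X$ is finite every closed ball is finite, so Lemma \ref{mettame}(1) applies.

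For (2), I would chain three facts: the earlier lemma asserting that a finite $\CFset$-category is tame if and only if it is quasi-tame (which itself follows from Lemma \ref{power}); Lemma \ref{mettame}(2), stating that a generalized metric space of finite type is quasi-tame if and only if it is non-degenerate; and the observation from (1) that a finite generalized metric space is of finite type. Applying these in turn yields
\[
X \text{ is a finite tame category} \iff X \text{ is finite and quasi-tame} \iff X \text{ is finite and non-degenerate},
\]
which is precisely the assertion.

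Finally, (3) is the specialization of (2) to metric spaces: a metric space is non-degenerate by definition, so the non-degeneracy hypothesis in (2) is automatic, and hence a metric space is a finite tame category if and only if it is finite, i.e.\ a finite metric space. There is no genuine obstacle in this corollary; the substance is entirely supplied by Lemma \ref{mettame} and the tame/quasi-tame comparison for finite categories. The only point requiring a little care is to invoke each equivalence of (2) within the regime where its standing hypothesis of finite type is known to hold, which is exactly why establishing ``finite $\implies$ finite type'' at the outset is the linchpin of the argument.
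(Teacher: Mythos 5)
Your proposal is correct and follows essentially the same route as the paper, which derives the corollary in one line from Lemma \ref{mettame} together with the observations that a generalized metric space has its points as objects and that finiteness implies finite type. If anything, your argument for part (2) is more carefully justified than the paper's: routing through quasi-tameness and the tame/quasi-tame equivalence for finite $\CFset$-categories is exactly what is needed there, since Lemma \ref{mettame}(3) alone would not cover non-degenerate generalized metric spaces having some one-way distances equal to zero.
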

It is easy to see that the definition of the magnitude in Definition \ref{defmag} is restricted to the original one for  finite metric spaces introduced by Leinster (\cite{L1}). 
\subsubsection{Euler characteristic of finite categories}\label{cateuler}
As stated in Proposition \ref{catem}, any small categories are $\Fset$-categories. We note that a small category is a $\CFset$-category  if and only if each hom-set is a finite set. Furthermore, we also note that a finite category is obviously of finite type as a $\CFset$-category. Hence we have the following.
\begin{lem}
A small category is a finite $\CFset$-category if and only if it is a finite category.
\end{lem}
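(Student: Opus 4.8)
The plan is to unwind the definitions on both sides and observe that, under the embedding $\Cat \too \Fsetcat$ of Proposition \ref{catem}, the filtration on each hom-object is concentrated in degree $0$. Recall from Proposition \ref{monem} that this embedding sends a set $X$ to the filtered set with $\deg x = 0$ for every $x \in X$; consequently, for a small category $C$ the hom-object $C(a,b)$ is the filtered set with $C(a,b)_\ell = C(a,b)$ for all $\ell \in \R_{\geq 0}$. By Definition \ref{coll} such a filtered set is collectable precisely when the underlying set $C(a,b)$ is finite, which is exactly the observation recorded just above the statement: $C$ is a $\CFset$-category if and only if every hom-set is finite. Being a \emph{finite} $\CFset$-category then means, by Definition \ref{defmag}(3), that in addition $\Ob C$ is finite.

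With this reformulation the two implications are immediate. For the forward direction, suppose $C$ is a finite $\CFset$-category, so $\Ob C$ is finite and each $C(a,b)$ is finite. Since $\Mor C = \bigsqcup_{a, b \in \Ob C} C(a,b)$ is a union of finitely many finite sets indexed by the finitely many pairs $(a,b)$, the set $\Mor C$ is finite, and hence $C$ is a finite category. Conversely, if $C$ is a finite category then $\Ob C$ is finite and $\Mor C$ is finite; in particular each hom-set $C(a,b)$, being a subset of $\Mor C$, is finite, so $C$ is a $\CFset$-category with finitely many objects, that is, a finite $\CFset$-category.

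There is essentially no obstacle here: the content of the lemma is that $\Mor C$ is the union of the finitely many finite hom-sets $C(a,b)$, so once $\Ob C$ is finite the finiteness of $\Mor C$ is equivalent to the finiteness of each hom-set. The only point worth stating explicitly is that the equivalence ``$C$ is a $\CFset$-category $\Leftrightarrow$ every hom-set is finite'' genuinely uses that the embedding places all morphisms in degree $0$; for a general $\Fset$-category, collectability of the hom-objects is strictly weaker than finiteness of the underlying hom-sets, so this reduction is special to the image of $\Cat \too \Fsetcat$.
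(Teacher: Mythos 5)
Your proposal is correct and matches the paper's own (largely implicit) argument: the paper likewise notes that a small category is a $\CFset$-category precisely when each hom-set is finite, and then the equivalence reduces to the fact that finitely many objects together with finite hom-sets is the same as finitely many objects and morphisms. Your explicit remark that collectability degenerates to finiteness only because the embedding $\Cat \too \Fsetcat$ places all morphisms in degree $0$ is a worthwhile clarification but not a departure from the paper's route.
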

The following shows that many finite categories including finite posets are tame. See also Proposition 2.11 of \cite{L2}
\begin{lem}
A finite category $C$ is a tame category if and only if it is skeletal and has no non-trivial endomorphisms.
\end{lem}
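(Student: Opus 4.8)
The plan is to exploit the fact that, under the embedding $\Cat \too \Fsetcat$ of Proposition \ref{catem}, every morphism of a small category $C$ acquires degree $0$. Consequently every non-degenerate path on $C$ has length $0$, and the tameness condition of Definition \ref{tamedf} collapses to a purely combinatorial statement: there exists an $N > 0$ such that $C$ admits no non-degenerate $N$-path, i.e.\ no chain $f_1 \sim \cdots \sim f_N$ of composable non-identity morphisms. Indeed, since every length equals $0$, for a fixed $\ell$ the requirement ``every non-degenerate $N_\ell$-path has length $> \ell$'' can only be satisfied vacuously, through the absence of such paths. I would record this reformulation first, since both directions then reduce to deciding whether arbitrarily long composable chains of non-identity morphisms exist. (Alternatively one could phrase this via Lemma \ref{power}(3), noting that the entries of $(1-\zeta_C)^n$ are degree-$0$ constants, so convergence forces $(1-\zeta_C)^n = 0$ for large $n$; but the direct formulation is more transparent.)

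For the ``only if'' direction I would argue by contraposition and build such chains explicitly. If $C$ is not skeletal, choose distinct isomorphic objects $a \ne b$ with mutually inverse isomorphisms $f : a \to b$ and $g : b \to a$; since $a \ne b$ neither is an identity, and the alternating sequence $f \sim g \sim f \sim \cdots$ is a non-degenerate $N$-path for every $N$. If instead $C$ has a non-trivial (i.e.\ non-identity) endomorphism $h : a \to a$, then $h \sim \cdots \sim h$ ($N$ times) is a non-degenerate $N$-path for every $N$. In either case no bound $N$ exists, so $C$ is not tame.

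For the ``if'' direction, assume $C$ is skeletal with no non-trivial endomorphisms. The key step is to observe that the only isomorphisms are identities: any isomorphism $f : a \to b$ forces $a = b$ by skeletality, whence $f$ is an endomorphism and hence an identity. This lets me define a relation on the finite set $\Ob C$ by declaring $a \le b$ whenever there is a morphism $a \to b$, and check it is a partial order: reflexivity and transitivity are clear from identities and composition, while antisymmetry follows because morphisms $f : a \to b$ and $g : b \to a$ produce the endomorphism $gf = {\rm id}_a$, which forces $f$ to be an isomorphism and so $a = b$. Along any non-degenerate path $f_1 \sim \cdots \sim f_n$ with $f_i : a_{i-1} \to a_i$, each $f_i$ being non-identity forces $a_{i-1} < a_i$ strictly, since $a_{i-1} = a_i$ would make $f_i$ an endomorphism and hence an identity. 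Thus $a_0 < a_1 < \cdots < a_n$ is a strict chain in a finite poset, so $n \le |\Ob C| - 1$; taking $N = |\Ob C|$ shows $C$ admits no non-degenerate $N$-path, and therefore $C$ is tame.

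I do not expect a genuinely hard analytic step here: the entire content lies in recognizing that degree-$0$ morphisms trivialize the length, reducing tameness to a finiteness-of-chains condition. The one point demanding care is the antisymmetry argument that upgrades ``skeletal with no non-trivial endomorphisms'' into an honest partial order, where both hypotheses must be used together; once the order is in place, the bound on path length is immediate from finiteness of $\Ob C$.
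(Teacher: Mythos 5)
Your proof is correct. The paper itself gives no details here --- it only says the lemma ``can be proved similarly to the proof of Lemma \ref{mettame} (2)'', i.e.\ by the same finiteness/extraction argument used for metric spaces: if quasi-tameness fails one has infinitely many bounded-length non-degenerate paths, and finiteness forces a repeated configuration (a ``degenerate pair'', which in the categorical setting becomes a non-identity cycle, hence either a non-trivial endomorphism or a pair of distinct isomorphic objects). Your argument fills in the same outline for the ``only if'' direction (explicitly building arbitrarily long paths from an iso pair or a non-trivial endomorphism), but organizes the ``if'' direction differently: instead of extracting a repeated pair from a hypothetical infinite family, you observe that skeletality plus triviality of endomorphisms makes ``there exists a morphism $a\to b$'' a partial order on the finite set $\Ob C$, so every non-degenerate path is a strict chain of length at most $|\Ob C|-1$ and one gets the uniform bound $N=|\Ob C|$ directly. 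This is cleaner in that it establishes tameness (the uniform bound of Definition \ref{tamedf}(4)) outright rather than quasi-tameness plus the finite-implies-tame lemma, and your opening reduction --- all morphisms have degree $0$ under the embedding of Proposition \ref{catem}, so tameness collapses to the nonexistence of long composable chains of non-identities --- is exactly the right starting point. One step is stated too tersely: from $gf={\rm id}_a$ alone you cannot conclude $f$ is an isomorphism; you also need $fg={\rm id}_b$, which follows because $fg$ is an endomorphism of $b$ and hence an identity by hypothesis. That is a one-line repair, not a gap in the method.
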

\begin{proof}
It can be proved similarly to the proof of Lemma \ref{mettame} (2).
\end{proof}
It is easy to see that the definition of the magnitude in Definition \ref{defmag} is restricted to the definition of Euler characteristic for finite categories introduced by Leinster (\cite{L2}).

We remark that any small category can be considered as a $\Fset$-category in another way. For a small category $C$ and $f \in \Mor C$, we consider $f$ as an element of a filtered set by setting $\deg f = 1$ if $f$ is not an identity, and $\deg {\rm id_\bullet} = 0$. It is easy to see that $C$ is a $\Fset$-category in this setting. When $C$ is a preordered set, it is same as considering $C$ as a digraph whose directed edges correspond to the relation $\leq$. Then it is also a generalized metric space.
\subsubsection{Euler characteristic of finite simplicial complexes}\label{simpcpx}
As explained in the last paragraph of \ref{cateuler}, any poset can be considered as a $\CFset$-category by setting the degree of all relations by $1$ except for identities which have degree $0$. In this setting, any finite poset $P$ is a finite non-degenerate generalized metric space, hence is a finite tame category by Lemma \ref{mettamecor}. Therefore Corollary \ref{magform} implies that
\begin{align*}
    \Mag P &= \# P + \sum_{k = 1}^{\infty}(-1)^k\sum_{a_0 \neq  \dots \neq a_k }q^{d(a_0, a_1) + \dots + d(a_{k-1}, a_k)}\\
    &= \# P + \sum_{k = 1}^{\infty}(-1)^k\sum_{ a_0 <  \dots < a_k }q^{k} \\
    &= \sum_{k = 0}^{\dim \Delta(P)}(-1)^k\#\{a_0 <  \dots < a_k\}q^{k},
\end{align*}
where we denote the order complex of $P$ by $\Delta(P)$. In particular, the magnitude of a finite poset in this setting is a polynomial. It implies that $(\Mag P)|_{q=1}$ is equal to the Euler characteristic of $\Delta(P)$. When $P$ is a face poset of a finite simplicial complex $S$, we obtain that $(\Mag P)|_{q=1} = \chi(S)$. Hence the magnitude covers the ordinary Euler characteristic of finite simplicial complexes. We again refer to Proposition 2.11 of \cite{L2}.
\subsubsection{Growth seriess of finitely generated groups}\label{growth}
Let $\Gamma$ be a finitely generated group with generators $S$. We consider $(\Gamma, S)$ as a $\Fset$-category by $\Ob (\Gamma, S) = \{\ast\}$ and $\Mor (\Gamma, S) = \Gamma$. Here we define the degree of $g \in \Mor (\Gamma, S)$ by its word length denoted by {\rm wl} in the following. Then it is a finite tame category. Hence we can consider its magnitude, and we have $\Mag (\Gamma, S) = (\sum_{g\in \Gamma}q^{{\rm wl }g})^{-1} \in \Q[[q^{\R}]]$, which is exactly the inverse of {\it the growth series} of $(\Gamma, S)$. On the other hand, let ${\rm Cay}(\Gamma, S)$ be {\it the Cayley graph} of $(\Gamma, S)$, and we consider it as a metric space by the path metric. Then ${\rm Cay}(\Gamma, S)$ is a tame category by Lemma \ref{mettame}, and we have a weighting  $w_{\bullet} = (\sum_{g\in \Gamma}q^{{\rm wl }g})^{-1}$ . We note that this coincidence is not accidental, which will be explained in Example \ref{gammas}.
\subsubsection{Poincar\'{e} polynomials of ranked posets}\label{mobius}
In the following, we see that the well-known invariant {\it the Poincar\'{e} polynomial} is a magnitude weighting.
\begin{df}
A poset $P$ with the minimum element $0$ is {\it a ranked poset} if it is equipped with {\it a rank function} $r : P \too \Z_{\geq 0}$ satisfying
    \[
    \begin{cases}
    r(0) = 0, \\
    r(b) > r(a) & \text{if $a < b$}, \\
    r(b) = r(a) + 1 & \text{if $b$ covers $a$}.
    \end{cases}
    \]
\end{df}
Here, we say that $b$ {\it covers} $a$ if $\{ c \mid a \leq c \leq b\} = \{a, b\}$.
\begin{eg}\label{arrI}
Let $V$ be a vector space over an arbitrary field. A finite collection of affine subspaces of $V$ is called {\it subspace arrangement}. For a subspace arrangement $S$, let $ I(S) = \{\cap_{t \in T}t \mid T \subset S\}$, where we set $\cap_{t \in \emptyset}t = V$. We equip $I(S)$ with a poset structure by defining $x \leq y$ if $x \supset y$. Then $I(S)$ has the minimum element $V$, and  we can define a rank function by $r(x) = {\rm codim\ } x$ with ${\rm codim\ } \emptyset = \dim V + 1$, which makes it a ranked poset.
\end{eg}
We metrize a finite ranked poset $(P, r)$ by $d(x, y) = \begin{cases}r(b) - r(a) & \text{if } a \leq b, \\ \infty & \text{otherwise}.\end{cases}$ Note that this is equivalent to considering the directed Hasse diagram of $P$ as a generalized metric space, where a directed edge is spanned from $a$ to $b$ if $b$ covers $a$. By Lemma \ref{mettamecor}, it is a finite tame category. Then we have the following by Proposition \ref{muformula2} and Corollary \ref{magform}.
\begin{prop}\label{weiform}
For a finite ranked poset $(P, \varphi)$, we have
\[
\zeta_{P}^{-1}(a, b) = \delta(a, b) + \sum_{k = 1}^{\infty}(-1)^k\sum_{\substack{a_0 < \dots < a_k,\\ a_0 = a, a_k =b}}q^{r(b)-r(a)},
\]
and
\[
k^a = 1 + \sum_b \sum_{k = 1}^{\infty}(-1)^k\sum_{\substack{a_0 < \dots < a_k,\\  a_0 = a, a_k = b}}q^{r(b) - r(a)}.
\]
\end{prop}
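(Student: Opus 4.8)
The plan is to read the statement as a direct specialization of Proposition \ref{muformula2} and Corollary \ref{magform} to the $\CFset$-category structure that the poset metric puts on $P$; consequently all the work lies in translating the combinatorics of non-degenerate paths into chains of $P$ and in evaluating their lengths. Since $(P,r)$ with this metric is already known to be a finite tame category by Lemma \ref{mettamecor}, both cited results apply, and only the identification of the summation data needs to be carried out.

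First I would unwind the enrichment. As a generalized metric space, each hom-object of $P$ is the one-element filtered set $\ast(d(x,y))$ when $d(x,y) < \infty$ and is empty otherwise. Because $d(x,y) = r(y) - r(x)$ exactly when $x \leq y$ and $d(x,y) = \infty$ otherwise, there is a morphism $x \to y$ precisely when $x \leq y$, it is unique, and its degree equals $r(y) - r(x)$. The identity ${\rm id}_x$ is the degree-$0$ morphism from $x$ to itself, so the non-identity morphisms $f$ with $sf = x$ and $tf = y$ are exactly those with $x < y$, for which $\deg f = r(y) - r(x) \geq 1$ since $r$ is integer valued.

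Next I would set up the bijection and the key length computation. A non-degenerate $k$-path $f_1 \sim \dots \sim f_k$ with $sf_1 = a$ and $tf_k = b$ is, by the previous step, the same datum as a strictly increasing chain $a = a_0 < a_1 < \dots < a_k = b$, where $f_i$ is the unique morphism $a_{i-1} \to a_i$; composability is automatic in a metric space, and the requirement $f_i \neq {\rm id}_\bullet$ forces strict inequalities. Its length is computed by telescoping,
\[
\sum_{i=1}^k \deg f_i = \sum_{i=1}^k \bigl(r(a_i) - r(a_{i-1})\bigr) = r(a_k) - r(a_0) = r(b) - r(a),
\]
so every non-degenerate $k$-path from $a$ to $b$ contributes the same monomial $q^{r(b)-r(a)}$, independently of the intermediate chain.

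Finally I would substitute. Feeding this identification into Proposition \ref{muformula2} replaces each exponent $\sum_i \deg f_i$ by $r(b)-r(a)$ and turns the index set ``non-degenerate $k$-paths from $a$ to $b$'' into ``chains $a_0 < \dots < a_k$ with $a_0 = a$, $a_k = b$,'' yielding the stated formula for $\zeta_P^{-1}(a,b)$. Likewise, applying the weighting formula of Corollary \ref{magform} and grouping the non-degenerate paths starting at $a$ according to their common endpoint $b = tf_k$ produces the displayed formula for $k^a$. I do not expect a genuine obstacle: the only points requiring care are the observation that strictly increasing chains of length $k$ are exactly the non-degenerate $k$-paths and the telescoping of the degrees; everything else is direct substitution into the two cited results.
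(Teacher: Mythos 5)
Your proposal is correct and matches the paper's own (largely implicit) argument: the paper likewise notes that $(P,d_r)$ is a finite tame category via Lemma \ref{mettamecor} and then simply cites Proposition \ref{muformula2} and Corollary \ref{magform}. Your identification of non-degenerate $k$-paths with strict chains and the telescoping of degrees to $r(b)-r(a)$ is exactly the translation the paper leaves to the reader.
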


Now we recall {\it  the M\"{o}bius function} of a poset.
\begin{df}
For a finite poset $P$, we define a square matrix $\xi_P \in M_{|P|}(\Z)$ by $\xi_P(x, y) = \begin{cases} 1 & \text{if } x \leq y, \\ 0 & \text{otherwise}.\end{cases}$ Then $\mu_P := \xi_P^{-1} \in M_{|P|}(\Z)$ exists, which we call {\it  the M\"{o}bius function} of $P$.
\end{df}
Note that, in the above definition, we consider a finite poset as a finite tame category, as in \ref{simpcpx}. Hence it has a magnitude, and the existence of $\xi_P^{-1}$ follows by substituting $q = 1$ to the polynomial magnitude of it. Furthermore, we immediately obtain the following by Proposition \ref{muformula2}.
\begin{prop}\label{mobform}
We have 
\[
\mu_P(a, b) = \delta(a, b) + \sum_{k = 1}^{\infty}(-1)^k\sum_{\substack{a_0 < \dots < a_k,\\ a_0 = a, a_k =b}}1.
\]
\end{prop}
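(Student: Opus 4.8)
The plan is to obtain the formula by specializing the explicit description of $\zeta_C^{-1}$ for tame categories (Proposition \ref{muformula2}) to a finite poset and then setting $q = 1$. Following \ref{simpcpx}, I regard $P$ as the $\CFset$-category in which every non-identity relation has degree $1$; thus $\zeta_P(a, b)$ equals $1$ when $a = b$, equals $q$ when $a < b$, and vanishes otherwise, so that $\xi_P$ is precisely the specialization $\zeta_P|_{q=1}$. As recorded in \ref{simpcpx}, such a $P$ is a finite non-degenerate generalized metric space, hence a finite tame category by Lemma \ref{mettamecor}, so Proposition \ref{muformula2} applies to it.

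Next I would rewrite the path sum of Proposition \ref{muformula2} as a sum over chains. In $P$ there is a unique non-identity morphism between any two strictly comparable elements, so a non-degenerate $k$-path $f_1 \sim \dots \sim f_k$ from $a$ to $b$ is the same datum as a strict chain $a_0 < \dots < a_k$ with $a_0 = a$ and $a_k = b$; moreover each $f_i$ has degree $1$, so the exponent $\sum_{i=1}^k \deg f_i$ equals $k$. Hence Proposition \ref{muformula2} specializes to
\[
\zeta_P^{-1}(a, b) = \delta(a, b) + \sum_{k=1}^\infty (-1)^k \sum_{\substack{a_0 < \dots < a_k,\\ a_0 = a, a_k = b}} q^k,
\]
which is a polynomial in $q$ because a finite poset admits chains of length at most $|P| - 1$; in particular $\zeta_P^{-1}$ has polynomial entries.

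It remains to pass from $q$ to $1$. The evaluation $q \mapsto 1$ is a ring homomorphism, and since $\zeta_P^{-1}$ is polynomial it induces a ring homomorphism $M_{|P|}(\Q[q]) \too M_{|P|}(\Q)$; applying it to the identity $\zeta_P \cdot \zeta_P^{-1} = 1$ yields $\xi_P \cdot (\zeta_P^{-1}|_{q=1}) = 1$, and uniqueness of inverses then gives $\mu_P = \zeta_P^{-1}|_{q=1}$. Substituting $q = 1$ in the display replaces each $q^k$ by $1$ and produces exactly the claimed expression for $\mu_P(a, b)$. The only step meriting attention is this interchange of inversion with the specialization $q = 1$, but it presents no real obstacle once one observes that $\zeta_P^{-1}$ is polynomial, which is why the result follows immediately from Proposition \ref{muformula2}.
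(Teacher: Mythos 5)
Your proposal is correct and follows exactly the paper's route: the paper simply asserts that the formula is obtained immediately from Proposition \ref{muformula2} by viewing $P$ as the tame category of \ref{simpcpx} and substituting $q=1$ into the (polynomial) inverse $\zeta_P^{-1}$. You have merely spelled out the details the paper leaves implicit, in particular the identification of non-degenerate $k$-paths with strict chains and the compatibility of the evaluation $q\mapsto 1$ with matrix inversion.
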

Note that we have $\zeta_P^{-1}|_{q = 1} = \mu_P$. The following definition is fundamental and pivotal in the study of subspace arrangements.
\begin{df}
{\it The Poincar\'{e} polynomial} $\pi_{P}$ of a ranked poset $P$ with the minimum element $0$ is defined by 
 \[
 \pi_{P}(q) := \sum_{a\in P} \mu_{P}(0, a)(-q)^{r(a)}.
 \]
 
\end{df}
\begin{eg}
 Let $S$ be a subspace arrangement. Then the Poincar\'{e} polynomial of  $S$ is defined as that of $(I(S), {\rm codim})$. 
 
\end{eg}
The following shows that the  Poincar\'{e} polynomial of a ranked poset is essentially the weighting of $P$ at $0$.
\begin{prop}\label{chmob}
 \[
 \pi_{P}(-q) = \sum_{a\in P} \zeta_{P}^{-1}(0, a) = k^0.
 \]
\end{prop}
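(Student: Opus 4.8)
The plan is to prove the two equalities separately: the rightmost one is essentially definitional, while the leftmost one follows from a term-by-term comparison of formulas that are already available.

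First I would dispose of the equality $\sum_{a \in P}\zeta_P^{-1}(0, a) = k^0$. A finite ranked poset, metrized as above, is a finite tame category by Lemma \ref{mettamecor}, so $\zeta_P$ is invertible in $M_P(A)$ by Proposition \ref{muformula2}, and Proposition \ref{invertiblecase} applies. It gives $k^b = \sum_{a}\zeta_P^{-1}(b, a)$, and specializing to $b = 0$ yields the claim. (This is also just the $a = 0$ case of the weighting formula in Proposition \ref{weiform}.)

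The remaining equality $\pi_P(-q) = \sum_{a}\zeta_P^{-1}(0, a)$ carries the actual content. I would start by rewriting the Poincaré polynomial at $-q$: since $\pi_P(q) = \sum_{a}\mu_P(0, a)(-q)^{r(a)}$, substituting $q \mapsto -q$ gives
\[
\pi_P(-q) = \sum_{a \in P}\mu_P(0, a)\,q^{r(a)}.
\]
Then I would insert the chain expansion of the Möbius function from Proposition \ref{mobform}, $\mu_P(0, a) = \delta(0, a) + \sum_{k=1}^{\infty}(-1)^k\sum_{\substack{a_0 < \dots < a_k,\\ a_0 = 0,\, a_k = a}}1$, and use $r(0) = 0$ so that the $\delta(0, a)q^{r(a)}$ term contributes $q^{r(0)} = 1$. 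This produces
\[
\pi_P(-q) = 1 + \sum_{a \in P}\sum_{k = 1}^{\infty}(-1)^k \sum_{\substack{a_0 < \dots < a_k,\\ a_0 = 0,\, a_k = a}} q^{r(a)}.
\]

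Finally I would compare this with $\sum_{a}\zeta_P^{-1}(0, a)$ computed from Proposition \ref{weiform}, where each chain $a_0 < \dots < a_k$ from $0$ to $a$ is weighted by $q^{r(a) - r(0)} = q^{r(a)}$ and the $\delta(0,a)$ term again contributes $1$; this is visibly the same double sum. There is no real obstacle here, since $P$ is finite and every sum is finite, so no convergence or rearrangement subtlety arises. The one point worth emphasizing is the reason the two expressions coincide: the exponent $r(b) - r(a)$ occurring in $\zeta_P^{-1}(a, b)$ depends only on the endpoints, so for $a = 0$ it equals $r(a_k) = r(a)$ for every chain ending at $a$, and this common factor $q^{r(a)}$ is precisely what multiplies the chain-counting Möbius formula to recover the weighting expression. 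Matching the two sums term by term then completes the proof.
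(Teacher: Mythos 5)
Your proposal is correct and is essentially the paper's own argument: the paper likewise combines Propositions \ref{weiform} and \ref{mobform}, factoring the common weight $q^{r(a)}$ out of the chain sum to recognize $\mu_P(0,a)$, just running the computation from $\sum_a \zeta_P^{-1}(0,a)$ toward $\pi_P(-q)$ rather than in the direction you chose. The identification with $k^0$ is handled the same way, via the weighting formula.
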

\begin{proof}
By Propositions \ref{weiform} and \ref{mobform}, we have 
\begin{align*}
\sum_{a\in P} \zeta_{P}^{-1}(0, a) &= 1 + \sum_{0 < a} \sum_{k=1}^{\infty}(-1)^k\sum_{\substack{a_0 < \dots < a_k,\\ a_0 = 0, a_k =a}}q^{r(a)} \\
&= \sum_a q^{r(a)}\left( \delta(0, a) + \sum_{k=1}^{\infty}(-1)^k\sum_{\substack{a_0 < \dots < a_k,\\ a_0 = 0, a_k =a}}1 \right) \\
&= \sum_a q^{r(a)}\mu_P(0, a) \\
&= \pi_P(-q).
\end{align*}
This completes the proof.
\end{proof}
\begin{rem}\label{rankedposet}
The above coincidence of the Poincar\'{e} polynomial and the weighting can be generalized as follows. Let $P$ be a finite poset with the minimum element $0$, equipped with an order preserving map $\varphi : P \too \Z_{\geq 0}$ satisfying that $\varphi (0) = 0$. For such a $P$, we define $d_\varphi : P\times P \too [0, \infty]$ by $d_\varphi (a, b) = \begin{cases}\varphi(b) - \varphi(a) & \text{if } a \leq b, \\ \infty & \text{otherwise},\end{cases}$ which makes $(P, d_\varphi)$ a non-degenerate finite generalized metric space, hence a finite tame category. The Poincar\'{e} polynomial $\pi_{(P, \varphi)}$ of such a $P$ is defined as $\pi_{(P, \varphi)}(q) = \sum_{a \in P}\mu_P(0, a)(-q)^{\varphi(a)}$. We can prove that $\pi_{(P, \varphi)}$ coincides with the weighting $k^0$ of $(P, d_\varphi)$ by the same argument of Proposition \ref{chmob}.
\end{rem}
\begin{eg}\label{arrP}
\begin{enumerate}
    \item For a ranked poset $(P, r)$, we can choose $\varphi = r$ to adapt to the above situation. Then the definitions above coincide with the original ones.
    \item Let $S$ be a subspace arrangement. The power set $P(S)$ is naturally equipped with a poset structure by inclusion, that is, $x \leq y$ if $x \subset y$. It has the minimum element $\emptyset$, and we can define an order preserving map $\varphi : P(S) \too \Z_{\geq 0}$ by $\varphi (x) = {\rm codim}\cap_{t \in x} t$. Then the Poincar\'{e} polynomial of $(P(S), {\rm codim})$ is the following :
    \[
    \pi_{(P(S), {\rm codim})}(q) = \sum_{T \subset S}\mu_{(P(S), {\rm codim})}(0, T)(-q)^{{\rm codim\ }\cap_{t \in T} t}.
    \]
   It is natural to ask how $(I(S), {\rm codim})$ and $(P(S), {\rm codim})$ differ. We  will see that the Poincar\'{e} polynomials of those coincide in Example \ref{arrIP}.
\end{enumerate}
\end{eg}
\section{Magnitude homology of filtered set enriched categories}\label{mh2}
In this section, we define the magnitude homology of $\Fset$-categories as a functor to the category of bi-graded abelian groups, by following \cite{LS}, and show its properties. From Examples \ref{growth} and \ref{mobius}, it turns out to be a categorification of the Poincar\'{e} polynomial and the growth series. After defining it, we describe the magnitude homology as {\it a Hochschild homology} of {\it the ``incidence algebra''} of $\Fset$-categories. We also show invariance of the magnitude homology under {\it the adjointness} of functors in the setting of filtered sets enrichment. Finally we consider a spectral sequence whose first page is isomorphic to the magnitude homology. We introduce a relation with Grigor'yan--Muranov--Lin--Yau's {\it path homology}, studied by the author in \cite{A}. We also discuss homotopy invariance of each page of this spectral sequence.
\subsection{Definition and the categorification property}
\subsubsection{Un-normalized magnitude chain complex}
First we prepare basic terminologies.
\begin{df}
\begin{enumerate}
\item {\it An $\R_{\geq 0}$-filtered abelian group}, or simply {\it a filtered abelian group} is a filtered set $A$ equipped with an abelian group structure such that $\deg (x + y) \leq \max\{\deg x, \deg y\}$ for any $x, y \in A$. {\it A filtered homomorphism} between filtered abelian groups is a filtered map that is also a group homomorphism. We denote the category of filtered abelian groups and homomorphisms by $\FAb$. We define a functor $\Z : \Fset \too \FAb$ by freely generating filtered abelian groups. We also define functors $\Z_\ell, \Z_{<\ell} : \Fset \too \FAb$ by $\Z_\ell X = \Z X_\ell$ and $\Z_{<\ell} X = \Z\bigcup_{\ell' < \ell}X_{\ell'}$. Note here that $X_\ell$ and $\bigcup_{\ell' < \ell}X_{\ell'}$ are also filtered sets for any filtered set $X$ and $\ell \in \R_{\geq 0}$.
\item {\it A filtered chain complex} is a collection $\{\del_n : A_n \too A_{n-1}\}_{n \in \Z}$ of filtered abelian groups $A_n$'s and filtered homomorphisms $\del_n$'s such that $\del_n \circ \del_{n+1} = 0$ for any $n \in \Z$. We suppose that chain complexes are {\it non-negative}, that is $A_n = 0$ for $n < 0$. {\it A filtered chain map} between filtered chain complexes is a family of filtered homomorphisms that is a usual chain map when forgetting the filtration. We denote the category of non-negative filtered chain complexes and filtered chain maps by ${\sf FCh}_{\geq 0}$.

\item {\it A filtered simplicial set} is an object of the functor category $\Fset^{\Delta^{\rm op}}$. {\it A filtered simplicial abelian group} is an object of the functor category $\FAb^{\Delta^{\rm op}}$. We have functors $\Z, \Z_\ell, \Z_{<\ell} : \Fset^{\Delta^{\rm op}} \too \FAb^{\Delta^{\rm op}}$ induced from $\Z, \Z_\ell, \Z_{<\ell} : \Fset \too \FAb$ respectively.
\item We define a functor ${\sf C}_\bullet : \FAb^{\Delta^{\rm op}} \too {\sf FCh}_{\geq 0}$ by ${\sf C}_n A_\bullet = A_n$ and $\del_n = \sum_i (-1)^id_i$. 
\end{enumerate}
\end{df}
The following is a refinement of {\it the nerve} functors for small categories in the setting of filtered sets enrichment.
\begin{df}
Let $C \in \Fsetcat$. Let $\underline{C}$ be its underlying small category structure.
\begin{enumerate}
\item We define {\it a filtered nerve functor} ${\sf FN}_{\bullet} : \Fsetcat \too \Fsset$ by 
\[
{\sf FN}_n C= \bigcup_{a_i \in \Ob C} C(a_0, a_1)\times C(a_1, a_2)\times  \dots \times C(a_{n-1}, a_n).
\]
The face and the degeneracy maps are defined similarly to the usual nerve ${\sf N}_{\bullet}\underline{C}$. We denote simplicial abelian groups $ \Z_\ell \circ {\sf FN}_{\bullet} C$ and $\Z_{<\ell} \circ {\sf FN}_{\bullet} C $ by $(\Z\FN_\bullet C)_\ell$ and $(\Z\FN_\bullet C)_{<\ell}$ respectively.
\item We denote the composition $\Co_{\bullet} \circ \Z \circ {\sf FN}_{\bullet} : \Fsetcat \too {\sf FCh}_{\geq 0}$ by ${\sf FC}_{\bullet}$. Explicitly, we have the following for $C \in \Fsetcat$ :
\begin{itemize}
\item $
\FC_0C = \Z\Ob C$,
\item $\FC_nC = \Z\{(f_1, \dots, f_n) \in (\Mor C)^n \mid tf_i = sf_{i+1}\}$,
\item $\begin{cases}(\FC_0 C)_0 = \FC_0 C, \\
(\FC_nC)_\ell = \Z \{(f_1, \dots, f_n) \in \FC_nC \mid  \sum_i \deg f_i \leq \ell\},
\end{cases}$ as filtered sets. 
\end{itemize}
We also denote chain complexes $\Co_{\bullet} (\Z{\sf FN}_{\bullet} C)_\ell$ and $\Co_{\bullet}(\Z{\sf FN}_{\bullet} C)_{<\ell} $ by $(\FC_\bullet C)_\ell$ and $(\FC_\bullet C)_{<\ell}$ respectively. Note that $(\FC_\bullet C)_\ell$ and $(\FC_\bullet C)_{< \ell}$ are subchain complexes of $\FC_\bullet C$.
\end{enumerate}
\end{df}
Now we define {\it the un-normalized magnitude chain complex} as follows.
\begin{df}
For $C \in \Fsetcat$ and $\ell \in \R_{\geq 0}$, we define {\it the un-normalized magnitude chain complex} $\wt{\MC}^{\ell}_\bullet C$ of $C$ as the quotient chain complex $(\FC_\bullet C)_{\ell}/(\FC_\bullet C)_{<\ell}$. Explicitly, we define a chain complex $(\wt{\MC}^{\ell}_\bullet C, \del^{\ell}_\bullet)$ by
\begin{itemize}
\item $\wt{\MC}^{\ell}_n C = \Z\{(f_1, \dots, f_n) \in (\Mor C)^n \mid tf_i = sf_{i+1}, \sum_i \deg f_i = \ell\}$,
\item $\del^{\ell}_{n, i}(f_1, \dots, f_n) =\begin{cases} (f_1, \dots, f_{i+1}\circ f_i, \dots, f_n) & \text{ if } \deg f_{i+1}\circ f_i = \deg f_{i+1} + \deg f_i, \\
0 & \text{otherwise},
\end{cases}$
\item $\del^{\ell}_n = \sum_i(-1)^i\del^{\ell}_{n, i}$.
\end{itemize}
\end{df}
\subsubsection{Magnitude chain complex and the categorification property}
We define the magnitude chain complex as the normalization of $\wt{\MC}^{\ell}_\bullet C$. To that end, we recall the following fundamental fact. See Section III-2 of \cite{GJ}, for example.
\begin{lem}\label{GJ}
Let $A_\bullet$ be a simplicial abelian group. Let $\Co_\bullet A$ be the associated chain complex and $\Co_\bullet D$ be the subchain complex of degenerated simplices. Then the quotient $\Co_\bullet A \too \Co_\bullet A/\Co_\bullet D$ is a homotopy equivalence.
\end{lem}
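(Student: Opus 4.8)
The plan is to realise the quotient map $\Co_\bullet A \too \Co_\bullet A/\Co_\bullet D$ as the projection of a chain complex onto a direct summand and to show that the complementary summand $\Co_\bullet D$ is contractible; the assertion then follows formally, since the projection of a chain complex onto a direct summand along a contractible complement is a homotopy equivalence. To set this up, introduce the \emph{normalized (Moore) complex} $N_\bullet A$ with $N_n A = \bigcap_{i=1}^{n}\ker\bigl(d_i : A_n \too A_{n-1}\bigr)$. Using the simplicial identity $d_i d_0 = d_0 d_{i+1}$ one checks that the differential $\del = \sum_i (-1)^i d_i$ restricts on $N_n A$ to $d_0$ and carries $N_n A$ into $N_{n-1}A$, so that $N_\bullet A$ is a subcomplex of $\Co_\bullet A$; likewise $\Co_\bullet D$, generated in degree $n$ by the images of the degeneracies $s_0,\dots,s_{n-1}$, is a subcomplex.

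The first and main step is the algebraic splitting $\Co_n A = N_n A \oplus \Co_n D$ in each degree. I would prove this by induction along a degeneracy filtration of $A_n$ interpolating between $A_n$ and $N_n A$, namely $N_n^{(p)} = \bigcap_{i=1}^{p}\ker d_i$ for $0\le p\le n$ (so $N_n^{(0)}=A_n$ and $N_n^{(n)}=N_n A$), showing at each stage that $N_n^{(p-1)}$ splits as $N_n^{(p)}$ together with a complement lying in the image of a single degeneracy. The verification rests on the mixed identities $d_i s_j = s_{j-1}d_i$ $(i<j)$, $d_j s_j = d_{j+1}s_j = \mathrm{id}$, $d_i s_j = s_j d_{i-1}$ $(i>j+1)$ and $s_i s_j = s_{j+1}s_i$ $(i\le j)$: the section of $d_p$ supplied by a degeneracy lets one project $N_n^{(p-1)}$ onto its $N_n^{(p)}$-part, the error term being degenerate. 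Summing these splittings over $p$ gives $\Co_n A = N_n A \oplus \Co_n D$, and since both factors are subcomplexes this is a decomposition of chain complexes; in particular the composite $N_\bullet A \hookrightarrow \Co_\bullet A \twoheadrightarrow \Co_\bullet A/\Co_\bullet D$ is an isomorphism of complexes. This bookkeeping with the simplicial identities is the step I expect to be the main obstacle, since the correct indexing of the filtration and the verification that each successive quotient peels off exactly one degeneracy image require care.

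It then remains to show that $\Co_\bullet D$ is contractible. I would construct an explicit contracting homotopy $h_n : \Co_n D \too \Co_{n+1}D$ assembled from the degeneracy operators (a prism-type homotopy built from the $s_j$), and verify $\del h + h \del = \mathrm{id}$ on $\Co_\bullet D$ directly from the simplicial identities; concretely one produces a chain homotopy $H$ on all of $\Co_\bullet A$ with $\del H + H\del$ equal to the projection onto $\Co_\bullet D$ and $H$ preserving both summands, whose restriction to $\Co_\bullet D$ is the sought contraction. Granting this, $\Co_\bullet A = N_\bullet A \oplus \Co_\bullet D$ is chain homotopy equivalent to $N_\bullet A$, so the projection $\Co_\bullet A \too \Co_\bullet A/\Co_\bullet D \cong N_\bullet A$ is the desired homotopy equivalence, with homotopy inverse the inclusion $N_\bullet A \hookrightarrow \Co_\bullet A$. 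As this is the classical normalization theorem for simplicial abelian groups, one may alternatively simply invoke Section III-2 of \cite{GJ}.
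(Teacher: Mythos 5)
Your outline is the classical normalization theorem argument --- the degreewise splitting $\Co_\bullet A = N_\bullet A \oplus \Co_\bullet D$ obtained by peeling off degeneracies along the filtration $\bigcap_{i\le p}\ker d_i$, together with contractibility of the degenerate summand --- and it is correct. The paper gives no proof of this lemma at all, simply citing Section III-2 of Goerss--Jardine, which is precisely the argument you reconstruct (and which you yourself note one may invoke directly), so there is nothing to fault beyond the fact that the explicit homotopies are only sketched.
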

Since $(\Z\FN_\bullet C)_{<\ell}$ is a subsimplicial abelian group of $(\Z\FN_\bullet C)_{\ell}$, its quotient 
\[
(\Z\FN_\bullet C)_{\ell}/(\Z\FN_\bullet C)_{<\ell}
\]
is again a simplicial abelian group. The chain complex of degenerated simplices $\Co_\bullet D$ of this quotient is a subchain complex of  $\wt{\MC}^{\ell}_\bullet C$ described as
\[
\Co_n D = \Z\{(f_1, \dots, f_n) \in \wt{\MC}^{\ell}_n C \mid  f_i = {\rm id}_\bullet \text{ for some }i\}.
\]
Then the quotient $\wt{\MC}^{\ell}_\bullet C \too \wt{\MC}^{\ell}_\bullet C/ \Co_\bullet D$ is a homotopy equivalence by Lemma \ref{GJ}. We define $\MC^{\ell}_\bullet C := \wt{\MC}^{\ell}_\bullet C/ \Co_\bullet D$ described as 
\begin{itemize}
\item $\MC^{\ell}_n C = \Z\{(f_1, \dots, f_n) \in (\Mor C)^n \mid tf_i = sf_{i+1}, \sum_i \deg f_i = \ell, f_i \neq {\rm id}_\bullet \text{ for any }i\}$,
\item $\del^{\ell}_{n, i}(f_1, \dots, f_n) =\begin{cases} (f_1, \dots, f_{i+1}\circ f_i, \dots, f_n) & \substack{ \text{ if } \deg f_{i+1}\circ f_i = \deg f_{i+1} + \deg f_i \\ \text{ and }f_{i+1}\circ f_i \neq {\rm id}_\bullet}, \\
0 & \text{otherwise},
\end{cases}$
\item $\del^{\ell}_n = \sum_i(-1)^i\del^{\ell}_{n, i}$.
\end{itemize}
We define the magnitude homology as the homology of the above homotopy equivalent chain complexes.
\begin{df}
We call the above chain complex $\MC^{\ell}_\bullet C$ {\it the magnitude chain complex} of $C$. We denote its homology by $\MH^{\ell}_\bullet C$. 
\end{df}
Note that $\MC^{\ell}_\bullet$ and $\MH^{\ell}_\bullet$ define functors from $\Fsetcat$.
\begin{df}
Let $C$ be a $\Fset$-category and $a \in \Ob C$. We say that $a$ is {\it a $T_1$-object}, or simply $T_1$, if there is no morphism $f \in \Mor C$ such that $sf = a \text{ or } tf = a, \deg f = 0$ and $f \neq {\rm id}_\bullet$. We also say that $C$ is $T_1$ if every object is $T_1$.
\end{df}
If $a, b \in \Ob C$ are $T_1$-objects, then we have subsimplicial abelian groups $\Z\FN^{\ell, a}_\bullet C$ and $\Z\FN^{\ell, a, b}_\bullet C$ of $\Z\FN^{\ell}_\bullet C :=(\Z\FN_\bullet C)_{\ell}/(\Z\FN_\bullet C)_{<\ell}$, where 
\[
\Z\FN^{\ell, a}_n C = \Z\{(f_1, \dots, f_n) \in \Z\FN^{\ell}_n C \mid sf_1 = a\},
\]
and 
\[
\Z\FN^{\ell, a, b}_n C = \Z\{(f_1, \dots, f_n) \in \Z\FN^{\ell}_n C \mid sf_1 = a, tf_n = b\}.
\]
We define chain complexes $\wt{\MC}^{\ell}_\bullet C = \Co_\bullet \Z\FN^{\ell}_\bullet C$, $\wt{\MC}^{\ell, a}_\bullet C = \Co_\bullet \Z\FN^{\ell, a}_\bullet C$ and $\wt{\MC}^{\ell, a, b}_\bullet C = \Co_\bullet \Z\FN^{\ell, a, b}_\bullet C$. Further, we define the normalizations of these $\MC^{\ell, a}_\bullet C$ and $\MC^{\ell, a, b}_\bullet C$ described as
\[
\MC^{\ell, a}_n C = \Z\{(f_1, \dots, f_n) \in \MC^{\ell}_n C \mid sf_1 = a\},
\]
and 
\[
\MC^{\ell, a, b}_n C = \Z\{(f_1, \dots, f_n) \in \MC^{\ell}_n C \mid sf_1 = a, tf_n = b\}.
\]
The quotient $\wt{\MC}^{\ell}_\bullet C \too \MC^{\ell}_\bullet C$ induces quotients $\wt{\MC}^{\ell, a}_\bullet C \too \MC^{\ell, a}_\bullet C$ and $\wt{\MC}^{\ell, a, b}_\bullet C \too \MC^{\ell, a, b}_\bullet C$, which are all homotopy equivalences by Lemma \ref{GJ}. We denote the homology of these equivalent chain complexes by $\MH^{\ell, a}_\bullet C$ and $\MH^{\ell, a, b}_\bullet C$ respectively. When $C$ is $T_1$, we have decompositions of chain complexes 
\[
\MC^{\ell}_\bullet C \cong \bigoplus_{a \in \Ob C} \MC^{\ell, a}_\bullet C \cong  \bigoplus_{a, b \in \Ob C} \MC^{\ell, a, b}_\bullet C,
\]
where $\MC^{\ell, a, b}_n C = \Z\{(f_1, \dots, f_n) \in \MC^{\ell}_n C \mid sf_1 = a, tf_n = b\}$. Hence they induce decompositions 
\[
\MH^{\ell}_\bullet C \cong \bigoplus_{a \in \Ob C} \MH^{\ell, a}_\bullet C \cong  \bigoplus_{a, b \in \Ob C} \MH^{\ell, a, b}_\bullet C.
\] 
We note that $\MC^{\ell}_\bullet C$ is spanned by non-degenerate paths on $C$ with length $\ell$. Hence we have the following.
\begin{lem}\label{bounded}
Let $C$ be a $\CFset$-category of finite type.
\begin{enumerate}
\item If $C$ is tame, then the chain complex $\MC^{\ell}_\bullet C$ is bounded for any $\ell \in \R_{\geq 0}$.
\item If $C$ is quasi-tame and $T_1$, then the chain complex $\MC^{\ell,a, b}_\bullet C$ is bounded for any $a, b \in \Ob C$ and $\ell \in \R_{\geq 0}$.
\end{enumerate}
\end{lem}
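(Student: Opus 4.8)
The plan is to read boundedness directly off the bases of the magnitude chain groups, without ever touching the differential. Recall that $\MC^{\ell}_n C$ is free abelian on the set of non-degenerate $n$-paths on $C$ of length exactly $\ell$, and that $\MC^{\ell, a, b}_n C$ is free on those running from $a$ to $b$. Here ``bounded'' means precisely that only finitely many of these groups are nonzero, i.e. that there is an upper bound on the simplicial degree $n$ in which a non-degenerate path of length $\ell$ can live. So in both parts the problem reduces to bounding $n$, and the differentials $\del^\ell_\bullet$ play no role; the $T_1$ hypothesis in (2) enters only to guarantee, via the decomposition already recorded above, that $\MC^{\ell, a, b}_\bullet C$ is a genuine subcomplex of $\MC^\ell_\bullet C$.

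For (1) I would invoke tameness directly. By Definition \ref{tamedf} there is an $N_\ell > 0$ such that every non-degenerate $N_\ell$-path has length $> \ell$. The key elementary observation is that any prefix $f_1 \sim \dots \sim f_k$ of a non-degenerate $n$-path $f_1 \sim \dots \sim f_n$ is again a non-degenerate $k$-path, and that, because every degree is non-negative, its length $\sum_{i=1}^k \deg f_i$ is at most the length $\sum_{i=1}^n \deg f_i$ of the whole path. Hence for $n \geq N_\ell$ the $N_\ell$-prefix of any non-degenerate $n$-path already has length $> \ell$, so the whole path has length $> \ell$ and in particular cannot have length exactly $\ell$. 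Therefore $\MC^{\ell}_n C = 0$ for all $n \geq N_\ell$, which is exactly the desired boundedness.

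For (2) I would use quasi-tameness, which by Definition \ref{tamedf} gives that there are only finitely many non-degenerate paths from $a$ to $b$ of length $\leq \ell$, hence a fortiori only finitely many of length exactly $\ell$ across all simplicial degrees. A finite set of paths attains a maximum number of composable morphisms, say $N$; for $n > N$ there is then no non-degenerate $n$-path from $a$ to $b$ of length $\ell$, so $\MC^{\ell, a, b}_n C = 0$ and the complex is bounded.

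I expect no serious obstacle; the only point that genuinely deserves care is the contrast between the two hypotheses, which is what forces the stronger tameness into (1). Quasi-tameness yields, for each pair $(a,b)$, a bound $N_{a,b}$ on the relevant simplicial degrees, but $\sup_{a,b} N_{a,b}$ may well be infinite once $C$ has infinitely many objects, so assembling the bounded pieces $\MC^{\ell, a, b}_\bullet C$ over all pairs need not produce a bounded complex. Tameness supplies the uniform bound $N_\ell$ independent of $(a,b)$, which is precisely what makes the full complex $\MC^\ell_\bullet C$ bounded. For this reason I would present (1) as a self-contained uniform estimate rather than attempt to deduce it from (2).
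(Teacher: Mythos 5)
Your proof is correct and takes essentially the same approach as the paper, which states the lemma without proof immediately after remarking that $\MC^{\ell}_\bullet C$ is spanned by non-degenerate paths of length $\ell$; your prefix argument for (1) and the finiteness argument for (2) are precisely the intended way to fill in the details. Your closing observation about why the uniform bound $N_\ell$ from tameness is genuinely needed in (1), while the pointwise bounds from quasi-tameness suffice only for (2), is also accurate and worth keeping.
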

The next propositions follow from Propositions \ref{muformula2}, \ref{magform}, and Lemma \ref{bounded}. They show that the magnitude homology categorifies the magnitude and the magnitude (co)weighting. It is a slight generalization of Corollary 7.15 in \cite{LS}
\begin{prop}
Let $C$ be a tame category. When $C$ is $T_1$, then it holds that 
\[
\zeta_C^{-1} (a, b) = \sum_{n, \ell}(-1)^n(\rk \MH^{\ell, a, b}_nC) q^\ell
\]
and 
\[
k_a = \sum_{n, \ell}(-1)^n(\rk \MH^{\ell, a}_nC) q^\ell.
\]

\end{prop}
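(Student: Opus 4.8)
The plan is to run an Euler characteristic argument on the magnitude chain complex, matching the chain‑level rank count supplied by Lemma \ref{power}(1) against the power series for $\zeta_C^{-1}$ established in Proposition \ref{muformula2}. The point is that the alternating sum of the ranks of the homology groups equals the alternating sum of the ranks of the chain groups, and the latter is exactly the coefficient extraction that appears in $\sum_n (1-\zeta_C)^n$.

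First I would fix $a, b \in \Ob C$ and check that the hypotheses put us in the setting where this invariance is available. Since $C$ is tame it is quasi-tame and of finite type, and since $C$ is $T_1$ we have the decomposition $\MC^{\ell}_\bullet C \cong \bigoplus_{a,b}\MC^{\ell,a,b}_\bullet C$; combined with Lemma \ref{bounded}(1) this shows that $\MC^{\ell,a,b}_\bullet C$ is a \emph{bounded} complex. Moreover each $\MC^{\ell,a,b}_n C$ is free abelian on the set of non-degenerate $n$-paths from $a$ to $b$ of length $\ell$, and by quasi-tameness there are only finitely many such paths, so each chain group is free of finite rank. Thus the standard identity
\[
\sum_n (-1)^n \rk \MH^{\ell,a,b}_n C = \sum_n (-1)^n \rk \MC^{\ell,a,b}_n C
\]
applies (it follows from $\rk \MC_n = \rk\ker\del_n + \rk\im\del_n$ and $\rk\MH_n = \rk\ker\del_n - \rk\im\del_{n+1}$ by telescoping over the bounded range of $n$).

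Next I would feed in the combinatorial interpretation of the right-hand side. By Lemma \ref{power}(1), $\rk \MC^{\ell,a,b}_n C$ is precisely the coefficient of $q^\ell$ in $(\zeta_C-1)^n(a,b)$, equivalently the coefficient of $q^\ell$ in $(-1)^n(1-\zeta_C)^n(a,b)$. Multiplying the displayed identity by $q^\ell$ and summing over $\ell$ then gives
\[
\sum_{n,\ell}(-1)^n\bigl(\rk \MH^{\ell,a,b}_n C\bigr)q^\ell = \sum_n (1-\zeta_C)^n(a,b) = \zeta_C^{-1}(a,b),
\]
where the last equality is Proposition \ref{muformula2}. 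The coweighting formula follows by the same argument applied to $\MC^{\ell,a}_\bullet C$: using the $T_1$-decomposition $\MC^{\ell,a}_\bullet C \cong \bigoplus_b \MC^{\ell,a,b}_\bullet C$ to sum the above over the free endpoint, and then identifying $\sum_b \zeta_C^{-1}(a,b)$ with the corresponding (co)weighting at $a$ via Proposition \ref{invertiblecase}.

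The step requiring the most care is the double-grading bookkeeping that turns the fibrewise Euler characteristics into an identity in $A=\Q[[q^{\R}]]$: one must justify that $\sum_{n,\ell}(-1)^n(\rk \MC^{\ell,a,b}_n C)\,q^\ell$ is a well-defined, left-finite element of $A$ and that it equals $\sum_n (1-\zeta_C)^n(a,b)$ coefficientwise. Boundedness of $\MC^{\ell,a,b}_\bullet C$ guarantees that for each fixed $\ell$ only finitely many $n$ contribute, so the inner alternating sum is finite, while quasi-tameness guarantees left-finiteness (finitely many non-degenerate paths of length $\le L$ for every $L$), so the interchange of the two summations is legitimate. Everything else—the freeness and finiteness of the chain groups, and the homotopy invariance of the Euler characteristic—is routine once the tame and $T_1$ hypotheses are in place.
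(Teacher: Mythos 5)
Your proposal is correct and follows exactly the route the paper intends: the paper derives this proposition directly from Proposition \ref{muformula2}, Corollary \ref{magform} and Lemma \ref{bounded}, i.e.\ the Euler-characteristic identity $\sum_n(-1)^n\rk \MH^{\ell,a,b}_nC=\sum_n(-1)^n\rk \MC^{\ell,a,b}_nC$ for the bounded complex of finitely generated free groups, combined with the identification of $\rk\MC^{\ell,a,b}_nC$ with the $q^\ell$-coefficient of $(\zeta_C-1)^n(a,b)$ from Lemma \ref{power}(1). You have simply written out the details that the paper leaves implicit, including the convergence and left-finiteness bookkeeping, so there is nothing further to add.
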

\begin{prop}\label{catfymag}
Let $C$ be a finite tame category. Then it holds that
\[
\Mag C = \sum_{n, \ell}(-1)^n(\rk \MH^{\ell}_nC) q^\ell.
\]

\end{prop}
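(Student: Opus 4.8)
The plan is to prove the identity coefficient by coefficient in the Novikov ring $A = \Q[[q^{\R}]]$: I fix $\ell \in \R_{\geq 0}$ and show that the coefficient of $q^\ell$ agrees on both sides. On the right-hand side this coefficient is $\sum_n (-1)^n \rk \MH^\ell_n C$, which I read as the Euler characteristic of the chain complex $\MC^\ell_\bullet C$. Working termwise in this way sidesteps any question about interchanging infinite sums of Novikov series, so no convergence argument is needed beyond what is already packaged in Corollary \ref{magform}.

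First I would record that, since $C$ is finite and tame, each $\MC^\ell_\bullet C$ is a bounded complex of finitely generated free abelian groups. It is free on the set of non-degenerate $n$-paths of length $\ell$; this basis is finite because $C$ is of finite type (so for fixed $n$ there are finitely many such paths) and tame (so there is an $N_\ell$ bounding the length $n$ of any relevant path), and boundedness is exactly Lemma \ref{bounded}(1). The Euler--Poincar\'{e} principle therefore applies and yields
\[
\sum_n (-1)^n \rk \MH^\ell_n C = \sum_n (-1)^n \rk \MC^\ell_n C,
\]
where I emphasize that $\rk$ is the free rank, so this holds even though $\MH^\ell_\bullet C$ may carry torsion. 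By the very definition of the magnitude chain complex, for $n \geq 1$ the rank $\rk \MC^\ell_n C$ counts the non-degenerate $n$-paths of length $\ell$, while $\rk \MC^\ell_0 C$ equals $\#\Ob C$ when $\ell = 0$ and $0$ otherwise, since $\MC^0_0 C = \Z\Ob C$.

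Finally I would assemble these equalities into the generating function $\sum_{n,\ell}(-1)^n(\rk \MH^\ell_n C)q^\ell$. The $n=0$ term contributes precisely $\#\Ob C$, coming entirely from $\ell = 0$, and the $n \geq 1$ terms reorganize into
\[
\sum_{n \geq 1}(-1)^n\sum_{\substack{f_1\sim \dots \sim f_n,\\ f_i\neq {\rm id}_\bullet}}q^{\sum_{i=1}^n \deg f_i},
\]
which is exactly the path sum in Corollary \ref{magform}; hence the total equals $\Mag C$. The only point requiring genuine care is the degree-zero bookkeeping: I must check that the contribution $\rk \MC^0_0 C = \#\Ob C$ of the objects lines up with the leading $\#\Ob C$ term of Corollary \ref{magform}, and that any non-degenerate paths of length $0$ (sequences of non-identity degree-zero morphisms, which tameness permits only in bounded length) are counted identically on both sides. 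Since the comparison is made coefficientwise and the coefficient of each $q^\ell$ is a finite alternating sum of path counts by tameness, this matching is forced by the Euler characteristic computation above, and the proposition follows.
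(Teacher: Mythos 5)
Your proof is correct and follows essentially the same route as the paper, which simply states that the proposition ``follows from Propositions \ref{muformula2}, \ref{magform}, and Lemma \ref{bounded}''; your coefficientwise Euler--Poincar\'{e} argument, using Lemma \ref{bounded} for boundedness and Corollary \ref{magform} to identify the resulting path counts with $\Mag C$, is exactly the detail the paper leaves implicit. The degree-zero bookkeeping you flag ($\rk\MC^{0}_0C = \#\Ob C$ and $\MC^{\ell}_0C = 0$ for $\ell>0$) is handled correctly.
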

\begin{rem}
We have $\zeta_C^{-1} (a, b) = \sum_{n, \ell}(-1)^n(\rk \MH^{\ell, a, b}_nC) q^\ell$ for a quasi-tame category $C$ by Remark \ref{direct} and  Lemma \ref{bounded}.
\end{rem}
\begin{rem}
As shown in Examples \ref{growth} and \ref{mobius}, the magnitude homology gives categorifications of the growth series of finitely generated groups and Poincar\'{e} polynomials of ranked posets. We have no idea whether they can have torsions, and what torsions means if any. It is known that the magnitude homology can have torsions (\cite{YK}, \cite{SS}).
\end{rem}
\subsection{Homotopy invariance of MH}
In the following, we prove that $\MH$ is invariant under category equivalence in the setting of filtered sets enrichment. As applications, we show that the magnitude homology of generalized metric spaces is invariant under {\it the Kolmogorov quotient}, and we also consider {\it the Galois connection} of posets. To that end, we first write down the definitions of some categorical notions in $\Fsetcat$, which are straightforward generalization of those in $\Cat$.
\begin{df}
Let $C, D$ be $\Fset$-categories. We define a {\it tensor product} $C\times D$ by $\Ob (C\times D) = \Ob C \times \Ob D$ and $(C\times D)((a, b), (a', b')) = C(a, b) \times D(a', b')$.
\end{df}
Let $I_0$ be the poset $\{0 < 1\}$ considered as a $\Fset$-category via the inclusions ${\sf Poset} \too \Cat \too \Fsetcat$. Namely, $I_0$ has two objects and one non-trivial morphism $0 \to 1$ with degree $0$.
\begin{df}\label{natran}
Let $F, G : C \too D \in \Fsetcat$. {\it A natural transformation} $\tau : F \Rightarrow G$ is a functor $\tau : C \times I_0 \too D$ such that $\tau (-, 0) = F(-)$ and $\tau (-, 1) = G(-)$.
\end{df}
Note that the underlying functor $\underline{\tau} : \underline{C \times I_0} \too \underline{D}$ is a natural transformation in $\Cat$. For natural transformations $\tau$ and $\tau'$, we denote their ``horizontal'' and ``vertical'' compositions by $\tau \circ \tau'$ and $\tau \bullet \tau'$ respectively whenever they are defined. We also denote the identity natural transformation of a functor $F$ by ${\rm id}_F$.
\begin{df}
Let $F : C \too D, G : D \too C \in \Fsetcat$. We say that $F$ is {\it left adjoint to} $G$, or equivalently $G$ is {\it right adjoint to} $F$, denoted by $F \dashv G$, if there are natural transformations $\varepsilon : {\rm id}_C \Rightarrow GF$ and $\eta : FG \Rightarrow {\rm id}_D$, called {\it the unit} and {\it the counit}, satisfying the following commutative diagrams : 
\begin{equation*}
\xymatrix{
F \ar@{=>}[r]^-{{\rm id}_F \bullet \varepsilon} \ar@{=}[rd] & FGF \ar@{=>}[d]^{\eta \bullet {\rm id}_F} & G \ar@{=>}[r]^-{\varepsilon \bullet {\rm id}_G} \ar@{=}[rd] & GFG \ar@{=>}[d]^{{\rm id}_G \bullet \eta} \\ &  F, && G.
}
\end{equation*}
\end{df}
We remark that $F\vdash G$ if and only if there is a natural isomorphism $C(-, G(-)) \cong D(F(-), -)$ in $\Fset$. 
\begin{eg}\label{gmetad}
Let $F : X \too Y, G : Y \too X \in \GMet$. Then $F \dashv G$ if and only if $d(Fx, y) = d(x, Gy)$ for any $x \in X$ and $y \in Y$.
\end{eg}

The following is essentially shown in Theorem 5.12 of \cite{LS}, but we write down the proof again in our setting.
\begin{prop}\label{natinv}
Let $F, G : C \too D \in \Fsetcat$. If there is a natural transformation $\tau : F \Rightarrow G$, then $\MC^{\ell}_\bullet F$ and $ \MC^{\ell}_\bullet G$ are chain homotopic. Further, if $Fa = Ga (=: b)$ and $a, b$ are $T_1$, then $\MC^{\ell, a}_\bullet F$ and $ \MC^{\ell, b}_\bullet G$ are chain homotopic.
\end{prop}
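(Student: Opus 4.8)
The plan is to reproduce the classical ``prism'' chain homotopy attached to a natural transformation, and to check that in our filtered setting it preserves the length filtration, so that it descends to the associated graded complexes $\MC^{\ell}$. Recall that by Definition \ref{natran} the transformation $\tau$ is a functor $\tau : C \times I_0 \too D$ with $\tau(-, 0) = F$ and $\tau(-, 1) = G$. Its components $\tau_a := \tau({\rm id}_a, 0\to 1) : Fa \too Ga$ are the images of the morphisms $({\rm id}_a, 0\to 1)$ of $C \times I_0$; since $\deg {\rm id}_a = 0$ and the generating morphism $0 \to 1$ of $I_0$ has degree $0$, the product filtration gives $\deg \tau_a = 0$. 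This single fact is what makes the whole construction filtration-compatible, and is the reason $\tau$ is allowed to sit in degree zero of a length-$\ell$ chain.

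Concretely, for a chain $(f_1, \dots, f_n)$ with $a_i := tf_i = sf_{i+1}$ (and $a_0 := sf_1$, $a_n := tf_n$) I would set
\[
h_n(f_1, \dots, f_n) = \sum_{i=0}^{n} (-1)^i (Ff_1, \dots, Ff_i, \tau_{a_i}, Gf_{i+1}, \dots, Gf_n),
\]
the standard triangulation of the prism transported through $\tau$. Each summand has total degree $\sum_{j \le i} \deg Ff_j + \deg \tau_{a_i} + \sum_{j > i} \deg Gf_j$; because $F$ and $G$ are filtered and $\deg \tau_{a_i} = 0$, this is $\le \sum_j \deg f_j$. Hence $h$ carries $(\FC_n C)_\ell$ into $(\FC_{n+1} D)_\ell$, i.e. $h$ is a filtered map. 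Moreover, since functors preserve identities, any degenerate input yields a degenerate output, so $h$ respects the degenerate subcomplexes.

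I would first verify, at the level of the un-normalized filtered complexes $\FC_\bullet$, the prism identity $\del h + h \del = \FC_\bullet G - \FC_\bullet F$; this is precisely the computation behind Theorem 5.12 of \cite{LS}, a routine telescoping of the face maps that I would not reproduce in detail. The essential new point is the passage to $\MC^\ell$: since $h$, $\FC_\bullet F$ and $\FC_\bullet G$ are all filtered, applying the additive associated-graded functor $Gr_\ell = (-)_\ell/(-)_{<\ell}$ preserves the identity and produces $\del \overline{h} + \overline{h}\, \del = \MC^\ell G - \MC^\ell F$ on $\wt{\MC}^\ell$; as $h$ preserves degenerate chains it descends to the normalization $\MC^\ell$ (Lemma \ref{GJ}), giving the claimed chain homotopy. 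The main obstacle is confirming that nothing is lost when restricting to a single length $\ell$, where summands may drop in degree; this is handled cleanly by establishing the homotopy first on the filtered complex and only afterwards applying $Gr_\ell$, rather than trying to write $h$ directly on $\MC^\ell$ and bookkeeping which terms degenerate.

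For the refinement, assume $Fa = Ga =: b$ with $a, b$ both $T_1$, so that the source decompositions $\MC^\ell_\bullet C \cong \bigoplus_{a}\MC^{\ell, a}_\bullet C$ and $\MC^\ell_\bullet D \cong \bigoplus_{b}\MC^{\ell, b}_\bullet D$ are valid and $\MC^{\ell, a}_\bullet F, \MC^{\ell, a}_\bullet G$ both land in $\MC^{\ell, b}_\bullet D$. It remains to observe that $h$ respects the source grading: in every summand of $h_n(f_1, \dots, f_n)$ the leading morphism is either $Ff_1$ (when $i \ge 1$) or $\tau_{a_0} = \tau_a$ (when $i = 0$), and both have source $Fa = b$. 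Thus $h$ restricts to a degree $+1$ map $\MC^{\ell, a}_\bullet C \too \MC^{\ell, b}_\bullet D$, and restricting the identity above to the source-$a$ summand shows that $\MC^{\ell, a}_\bullet F$ and $\MC^{\ell, b}_\bullet G$ are chain homotopic.
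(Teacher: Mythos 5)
Your proposal is correct and takes essentially the same route as the paper: both build the prism homotopy $H_\bullet : \FC_\bullet C \too \FC_{\bullet+1}D$ from the components $\tau_a$, observe that $\deg \tau_a = 0$ makes it filtration-preserving so that it descends through $Gr_\ell$ to $\wt{\MC}^{\ell}_\bullet$ and then to the normalization, and obtain the $T_1$ refinement by noting the homotopy preserves the source object. Your explicit formula (summing over $i=0,\dots,n$ with $Gf_{i+1},\dots,Gf_n$ after $\tau_{a_i}$) is in fact the cleaner standard indexing of the same homotopy the paper writes down.
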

\begin{proof}
We construct a filtration preserving chain homotopy $H_\bullet : \FC_\bullet C \too \FC_{\bullet+1}D$. Note that this construction is well-known for small categories, namely when we forget the filtration. Hence it suffices to check the chain homotopy preserves the filtration. For each $k \in \Z_{\geq 0}$, we define $H_k$ by
\[
H_k(f_1, \dots, f_k) = \sum_{i=1}^k(-1)^i (Ff_1, \dots, Ff_i, \tau(tf_i), Gf_{i}, \dots, Gf_k),
\]
where $\tau(tf_i)$ is the arrow $\tau((tf_i, 0), (tf_i, 1)) : F(tf_i) \to G(tf_i)$. Then we can check that $\del_{k+1}H_k + H_{k-1}\del_{k} = \FC_k F - \FC_k G$. Since $\deg \tau(tf_i) = 0$, it does not decrease the degree. Hence the above homotopy induces a homotopy between $\wt{\MC}^{\ell}_k F$ and $\wt{\MC}^{\ell}_k G$, which are chain maps from $(\FC_\bullet C)_{\ell}/(\FC_\bullet C)_{<\ell}$ to $(\FC_\bullet D)_{\ell}/(\FC_\bullet D)_{<\ell}$. Further, if $Fa = Ga (=: b)$ and $a, b$ are $T_1$, then it is easily checked that this homotopy is restricted to $\wt{\MC}^{\ell, a}_\bullet C \too  \wt{\MC}^{\ell, b}_{\bullet + 1} D$. Hence the latter assertion follows. This completes the proof.
\end{proof}
The following is immediate from Proposition \ref{natinv}.
\begin{cor}\label{adinv}
Let $F : C \too D, G : D \too C \in \Fsetcat$. If $F \dashv G$, then they induce isomorphisms between $\MH^{\ell}_\bullet C$ and $\MH^{\ell}_\bullet D$. Further, if $a \in \Ob C$ and $b \in \Ob D$ are $T_1$,  and $Fa = b, Gb = a$, then the above isomorphisms are restricted to isomorphisms between $\MH^{\ell, a}_\bullet C$ and $\MH^{\ell, b}_\bullet D$.
\end{cor}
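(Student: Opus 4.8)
The plan is to deduce everything from the functoriality of $\MC^{\ell}_\bullet$ together with Proposition \ref{natinv} applied to the unit and the counit. First I would record that, since $\MC^{\ell}_\bullet$ is a functor on $\Fsetcat$, the functors $F$ and $G$ induce chain maps $\MC^{\ell}_\bullet F$ and $\MC^{\ell}_\bullet G$, and that $\MC^{\ell}_\bullet(GF) = \MC^{\ell}_\bullet G \circ \MC^{\ell}_\bullet F$ and $\MC^{\ell}_\bullet(FG) = \MC^{\ell}_\bullet F \circ \MC^{\ell}_\bullet G$, while $\MC^{\ell}_\bullet \mathrm{id}_C$ and $\MC^{\ell}_\bullet \mathrm{id}_D$ are the respective identity chain maps.

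Next, the adjunction $F \dashv G$ supplies a unit $\varepsilon : \mathrm{id}_C \Rightarrow GF$ and a counit $\eta : FG \Rightarrow \mathrm{id}_D$. Applying Proposition \ref{natinv} to $\varepsilon$ (whose source and target functors are $\mathrm{id}_C$ and $GF$) gives a chain homotopy $\MC^{\ell}_\bullet G \circ \MC^{\ell}_\bullet F \simeq \mathrm{id}_{\MC^{\ell}_\bullet C}$, and applying it to $\eta$ gives $\MC^{\ell}_\bullet F \circ \MC^{\ell}_\bullet G \simeq \mathrm{id}_{\MC^{\ell}_\bullet D}$. Thus $\MC^{\ell}_\bullet F$ and $\MC^{\ell}_\bullet G$ are mutually inverse chain homotopy equivalences, and passing to homology yields the asserted isomorphism $\MH^{\ell}_\bullet C \cong \MH^{\ell}_\bullet D$.

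For the pointed refinement I would invoke the second part of Proposition \ref{natinv}, which requires that the two functors being compared agree on the chosen base point and that this base point is $T_1$. Here the hypotheses $Fa = b$ and $Gb = a$ give $GF(a) = G(b) = a = \mathrm{id}_C(a)$ and $FG(b) = F(a) = b = \mathrm{id}_D(b)$, so the unit and counit restrict over the base points $a$ and $b$ respectively; since $a$ and $b$ are assumed $T_1$, the hypotheses of Proposition \ref{natinv} are met. Tracking base points through functoriality, $\MC^{\ell, a}_\bullet F$ lands in $\MC^{\ell, b}_\bullet D$ (as $Fa = b$) and $\MC^{\ell, b}_\bullet G$ lands in $\MC^{\ell, a}_\bullet C$ (as $Gb = a$), so the same argument produces mutually inverse chain homotopy equivalences between $\MC^{\ell, a}_\bullet C$ and $\MC^{\ell, b}_\bullet D$, hence $\MH^{\ell, a}_\bullet C \cong \MH^{\ell, b}_\bullet D$.

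The homological content is essentially automatic once Proposition \ref{natinv} is in hand; the only point needing care is the bookkeeping in the pointed case, namely checking that the base point matches on both sides of each triangle and that the restriction clause of Proposition \ref{natinv} genuinely applies. I expect this base-point tracking, through the identities $GFa = a$ and $FGb = b$ and the $T_1$ hypothesis, to be the main (and only mild) obstacle.
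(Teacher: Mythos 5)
Your proposal is correct and is exactly the argument the paper intends: the paper gives no explicit proof, stating only that the corollary is immediate from Proposition \ref{natinv}, and your write-up supplies precisely the expected details (applying that proposition to the unit and counit, and using $GFa=a$, $FGb=b$ with the $T_1$ hypothesis for the pointed refinement).
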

As an application of Corollary \ref{adinv}, we consider {\it the Kolmogorov quotient} of generalized metric spaces, and {\it the Galois connection} of posets.
\subsubsection{Kolmogorov quotient of generalized metric spaces}
\begin{df}
For a generalized metric space $X$, we define an equivalence relation $\sim$ on $X$ by $x \sim y$ if and only if $d(x, y) = d(y, x) = 0$. Then the quotient set $X/\sim$ is again a generalized metric space by defining $d([x], [y]) = d(x, y)$, which is obviously non-degenerate. We call this generalized metric space {\it the Kolmogorov quotient} of $X$, and denote it by ${\sf KQ}X$.
\end{df}
Note that $X = {\sf KQ}X$ if $X$ is non-degenerate.
\begin{lem}\label{kqad}
For any generalized metric space $X$, the quotient map $p : X \too {\sf KQ}X$ is a left adjoint functor in $\Fsetcat$.
\end{lem}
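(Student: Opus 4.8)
The plan is to exhibit an explicit right adjoint $G : {\sf KQ}X \too X$ and then verify the adjunction through the metric characterization in Example \ref{gmetad}, which reduces the whole statement to the defining formula $d([x],[y]) = d(x,y)$ of the quotient metric.

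First I would fix, using the axiom of choice, a section $s$ of the quotient map $p$; that is, for every class $c \in {\sf KQ}X$ I choose a representative $s(c) \in X$ with $[s(c)] = c$, and set $G(c) := s(c)$. The map $G$ is a morphism in $\Fsetcat$: since $d_X(s(c), s(c')) = d_{{\sf KQ}X}([s(c)], [s(c')]) = d_{{\sf KQ}X}(c, c')$ by the definition of the Kolmogorov metric, $G$ is in fact an isometric embedding, hence $1$-Lipschitz. Likewise $p$ is $1$-Lipschitz because $d_{{\sf KQ}X}([x],[y]) = d_X(x,y)$, so both $p$ and $G$ are indeed morphisms of generalized metric spaces.

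Next I would check the adjunction condition of Example \ref{gmetad}, namely $d_{{\sf KQ}X}(p x, c) = d_X(x, G c)$ for all $x \in X$ and $c \in {\sf KQ}X$. Writing $c = [s(c)] = [G c]$, the left-hand side is $d_{{\sf KQ}X}([x],[G c])$, which equals $d_X(x, G c)$ directly by the definition of the metric on ${\sf KQ}X$. Thus the required equality is not really a computation but an unwinding of definitions, and Example \ref{gmetad} then yields $p \dashv G$, so $p$ is a left adjoint.

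I expect no serious obstacle: the argument is essentially bookkeeping of the definitions once the section $G$ is in place. The only genuine ingredient is the axiom of choice, used to pick the representatives defining $G$; different choices yield different but naturally isomorphic right adjoints, which does not affect the assertion that $p$ admits one. As an alternative to Example \ref{gmetad}, one could construct the unit $\varepsilon : \mathrm{id}_X \Rightarrow G p$ and counit $\eta : p G \Rightarrow \mathrm{id}_{{\sf KQ}X}$ directly — they exist because $d_X(x, G([x])) = 0$ and $[G(c)] = c$ — with the triangle identities holding automatically, since parallel natural transformations between $1$-Lipschitz maps are unique.
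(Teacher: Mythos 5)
Your proof is correct and follows essentially the same route as the paper: choose a section $q$ of $p$, observe both maps are $1$-Lipschitz, and verify $d_{{\sf KQ}X}(px,[y]) = d_X(x,q[y])$ from the definition of the quotient metric, concluding via Example \ref{gmetad}. The extra remarks on the unit/counit are harmless but not needed.
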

\begin{proof}
It is straightforward to check that the quotient map $p$ is a $1$-Lipschitz map, hence a functor in $\Fsetcat$. Let $q : {\sf KQ}X \too X$ be a section of the map $p$. Then $q$ is obviously a $1$-Lipschitz map, and we have 
\[
d_X(x, q[y]) = d_X(x, y) = d_{{\sf KQ}X}(px, [y])
\]
for any $x \in X$ and $[y] \in {\sf KQ}X$. Hence we have $p \dashv q$ by Example \ref{gmetad}. This completes the proof.
\end{proof}
Now we obtain the following by Corollary \ref{adinv} and Lemma \ref{kqad}.
\begin{prop}\label{kolinv}
For any generalized metric space $X$, we have $\MH^{\ell}_\bullet X \cong \MH^{\ell}_\bullet {\sf KQ}X$. Namely, the magnitude homology is invariant under Kolmogorov quotient.
\end{prop}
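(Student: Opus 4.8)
The plan is to obtain the statement as a formal consequence of the adjoint-invariance of magnitude homology proved in Corollary \ref{adinv}, feeding in the adjunction already supplied by Lemma \ref{kqad}. No new chain-level argument is needed: both the homotopy of Proposition \ref{natinv} and its packaging into an invariance statement for adjoint pairs are in hand, so the entire proof is bookkeeping that routes the specific quotient and section functors through these results.

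Concretely, first I would invoke Lemma \ref{kqad} to produce the quotient functor $p : X \too {\sf KQ}X$ together with a section $q : {\sf KQ}X \too X$ satisfying $p \dashv q$ in $\Fsetcat$. This is the only place where the geometry of the Kolmogorov quotient is used: the section $q$ chooses a representative in each $\sim$-class, and the adjunction identity $d_X(x, q[y]) = d_{{\sf KQ}X}(px, [y])$ of Example \ref{gmetad} holds because replacing a point $y$ by another point at mutual distance zero leaves $d(x, -)$ unchanged, by the triangle inequality. Then I would apply Corollary \ref{adinv} to the adjunction $p \dashv q$, which immediately yields $\MH^{\ell}_\bullet X \cong \MH^{\ell}_\bullet {\sf KQ}X$ for every $\ell \in \R_{\geq 0}$.

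The one point deserving care — and the only thing resembling an obstacle — is that $X$ is an arbitrary generalized metric space and need not be $T_1$ (a generalized metric space is $T_1$ precisely when no two distinct points lie at distance zero in either direction, which is strictly stronger than the non-degeneracy enjoyed by ${\sf KQ}X$). One must therefore be sure to use only the first, unindexed assertion of Corollary \ref{adinv}, which requires merely an adjunction $F \dashv G$ and delivers $\MH^{\ell}_\bullet C \cong \MH^{\ell}_\bullet D$; the $T_1$ hypothesis enters only for the refined object-indexed isomorphisms $\MH^{\ell, a}_\bullet$, which are not claimed here. With that caveat observed, the proof is complete.
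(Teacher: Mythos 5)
Your proof is correct and is exactly the paper's argument: the paper derives Proposition \ref{kolinv} directly from Lemma \ref{kqad} (the adjunction $p \dashv q$) together with the first, unindexed part of Corollary \ref{adinv}. Your remark that the $T_1$ hypothesis is only needed for the object-indexed refinement is accurate and shows the right care, but introduces nothing beyond what the paper already does.
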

Note that Proposition \ref{kolinv} together with Proposition \ref{catfymag} implies that any finite generalized metric space have the magnitude by factoring its Kolmogorov quotient, even if it is not tame, namely not non-degenerate.
\subsubsection{Galois connection of posets}
\begin{df}
 Let $P$ and $Q$ be posets, and let $F : P \too Q$ and $G : Q \too P$ be order preserving maps. The pair $(F, G)$ is  {\it a Galois connection} if $F\dashv G$ in $\Cat$ and thus in $\Fsetcat$, namely it satisfies that $x \leq GFx$ and $FGy \leq y$ for all $x \in P$ and $y \in Q$.
\end{df}
\begin{prop}\label{galad}
 Let $P$ and $Q$ be finite posets with minimum elements $0_P$ and $0_Q$. Let $\varphi : P \too \Z_{\geq 0}$ and $\psi : Q \too \Z_{\geq 0}$ be order preserving maps satisfying that $\varphi^{-1}\{0\} = \{0_P\}$ and $\psi^{-1}\{0\} = \{0_Q\}$. If a Galois connection $(F, G)$, where $F : P \too Q$ and $G : Q \too P$, satisfies that $\psi F = \varphi$ and $\varphi  G = \psi$, then $(F, G)$ is an adjoint pair between generalized metric spaces $(P, d_\varphi)$ and $(Q, d_\psi)$ defined in Remark \ref{rankedposet}.
\end{prop}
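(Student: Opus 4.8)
The plan is to reduce the asserted adjointness to the metric-space criterion recorded in Example \ref{gmetad}, which says that for $1$-Lipschitz maps $F$ and $G$ one has $F \dashv G$ in $\GMet$ exactly when $d_\psi(Fx, y) = d_\varphi(x, Gy)$ for all $x \in P$ and $y \in Q$. Before invoking it I would first check that $F$ and $G$ are genuinely $1$-Lipschitz for $d_\varphi$ and $d_\psi$, so that they are functors in $\GMet$ and hence in $\Fsetcat$. For $F$: if $a \leq b$ then $Fa \leq Fb$ since $F$ is order preserving, and then $d_\psi(Fa, Fb) = \psi(Fb) - \psi(Fa) = \varphi(b) - \varphi(a) = d_\varphi(a, b)$ by the hypothesis $\psi F = \varphi$; if $a \not\leq b$ the distance $d_\varphi(a,b)$ is $\infty$ and there is nothing to verify. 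The argument for $G$ is identical using $\varphi G = \psi$. (The normalization hypotheses $\varphi^{-1}\{0\} = \{0_P\}$ and $\psi^{-1}\{0\} = \{0_Q\}$ are not needed for the adjointness itself; they only place us in the framework of Remark \ref{rankedposet} so that $d_\varphi$ and $d_\psi$ are defined.)

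The heart of the proof is the verification of $d_\psi(Fx, y) = d_\varphi(x, Gy)$. I would first extract from the Galois connection the biconditional $Fx \leq y \iff x \leq Gy$: if $Fx \leq y$ then applying the monotone $G$ and using the unit $x \leq GFx$ gives $x \leq GFx \leq Gy$, and symmetrically if $x \leq Gy$ then applying $F$ and using the counit $FGy \leq y$ gives $Fx \leq FGy \leq y$. Consequently the two distances are finite for exactly the same pairs $(x,y)$, both being $\infty$ off the set $\{(x,y) : Fx \leq y\}$. On that set I would compute directly: $d_\psi(Fx, y) = \psi(y) - \psi(Fx) = \psi(y) - \varphi(x)$ using $\psi F = \varphi$, while $d_\varphi(x, Gy) = \varphi(Gy) - \varphi(x) = \psi(y) - \varphi(x)$ using $\varphi G = \psi$; hence the two sides agree. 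Combining the finite and the infinite cases yields the identity for all $x, y$, and Example \ref{gmetad} then delivers $F \dashv G$ between $(P, d_\varphi)$ and $(Q, d_\psi)$.

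The computation is essentially bookkeeping, so I do not expect a serious obstacle; the one point deserving care is the matching of the $\infty$-valued cases. Since $d_\varphi$ and $d_\psi$ equal $\infty$ precisely off the respective order relations, the proposed equality can hold only if the finiteness loci of the two sides coincide, and this is exactly what the order-theoretic biconditional $Fx \leq y \iff x \leq Gy$ guarantees. I would therefore take care to derive that biconditional cleanly from the unit and counit rather than treat it as self-evident, as it is the structural fact aligning the two sides; everything else is the substitution of $\psi F = \varphi$ and $\varphi G = \psi$.
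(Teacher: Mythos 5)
Your proof is correct and follows essentially the same route as the paper's: compute $d_\psi(Fx,y)$ and $d_\varphi(x,Gy)$ case by case using $\psi F = \varphi$ and $\varphi G = \psi$, match the finiteness loci via the Galois-connection biconditional $Fx \leq y \iff x \leq Gy$, and conclude by the criterion of Example \ref{gmetad}. You merely make explicit two points the paper leaves implicit (the derivation of the biconditional from the unit and counit, and the check that $F$ and $G$ are $1$-Lipschitz), which is a welcome clarification rather than a different argument.
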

\begin{proof}
Let $x \in P$ and $y \in Q$. Then we have 
\[
d_\psi (Fx, y) = \begin{cases} \psi (y) - \psi (Fx) & Fx \leq y ,\\ \infty & Fx \not\leq y ,\end{cases} = \begin{cases} \psi (y) - \varphi (x) & Fx \leq y, \\ \infty & Fx \not\leq y ,\end{cases}
\]
and
\[
d_\varphi (x, Gy) = \begin{cases} \varphi (Gy) - \varphi (x) & x \leq Gy, \\ \infty &  x \not\leq Gy, \end{cases} = \begin{cases} \psi (y) - \varphi (x) &  x \leq Gy, \\ \infty &  x \not\leq Gy. \end{cases}
\]
Since $(F, G)$ is a Galois connection, we obtain that $d_\psi (Fx, y) = d_\varphi (x, Gy)$, hence $(F, G)$ is an adjoint pair by Example \ref{gmetad}. This completes the proof.
\end{proof}
The following is immediate from Corollary \ref{magform}, Proposition \ref{chmob} and Corollary \ref{adinv}.
\begin{cor}\label{poincare}
 Under the same assumption as in Proposition \ref{galad}, we have $\pi_{(P, \varphi)}(q) = \pi_{(Q, \psi)}(q)$.
\end{cor}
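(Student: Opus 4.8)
The plan is to feed the adjunction produced by Proposition \ref{galad} into the homotopy invariance Corollary \ref{adinv}, and then translate the resulting magnitude homology isomorphism back into an equality of Poincar\'e polynomials via the categorification of the weighting (Corollary \ref{magform}) and the identification of the weighting with the Poincar\'e polynomial (Proposition \ref{chmob} together with Remark \ref{rankedposet}). Concretely, Proposition \ref{galad} tells us that $(F, G)$ is an adjoint pair $F \dashv G$ between the generalized metric spaces $(P, d_\varphi)$ and $(Q, d_\psi)$, so Corollary \ref{adinv} gives $\MH^{\ell}_\bullet (P, d_\varphi) \cong \MH^{\ell}_\bullet (Q, d_\psi)$; what I really want, though, is the refinement localized at the minimum elements, namely $\MH^{\ell, 0_P}_\bullet (P, d_\varphi) \cong \MH^{\ell, 0_Q}_\bullet (Q, d_\psi)$ for every $\ell$.

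Before invoking the refined part of Corollary \ref{adinv}, I would verify its hypotheses for the pair $0_P, 0_Q$. First, $F(0_P) = 0_Q$ and $G(0_Q) = 0_P$: since $\psi F = \varphi$ and $\varphi(0_P) = 0$ (as $\varphi^{-1}\{0\} = \{0_P\}$), we get $\psi(F 0_P) = 0$, hence $F 0_P = 0_Q$ because $\psi^{-1}\{0\} = \{0_Q\}$, and the other identity is symmetric. Second, $0_P$ and $0_Q$ are $T_1$-objects: in $(P, d_\varphi)$ one has $d_\varphi(0_P, y) = \varphi(y)$, which vanishes only for $y = 0_P$, while $d_\varphi(y, 0_P) = 0$ forces $y \leq 0_P$ and hence $y = 0_P$, so the only zero-degree morphism touching $0_P$ is the identity, and likewise in $(Q, d_\psi)$. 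Finally, by Remark \ref{rankedposet} both $(P, d_\varphi)$ and $(Q, d_\psi)$ are non-degenerate finite generalized metric spaces, hence finite tame categories by Corollary \ref{mettamecor}, so the categorification statements apply.

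With these checks in hand, I would assemble the conclusion. The refined Corollary \ref{adinv} yields $\rk \MH^{\ell, 0_P}_n (P, d_\varphi) = \rk \MH^{\ell, 0_Q}_n (Q, d_\psi)$ for all $n, \ell$, so the two graded Euler characteristics agree:
\[
\sum_{n, \ell} (-1)^n \left( \rk \MH^{\ell, 0_P}_n (P, d_\varphi) \right) q^\ell = \sum_{n, \ell} (-1)^n \left( \rk \MH^{\ell, 0_Q}_n (Q, d_\psi) \right) q^\ell.
\]
By the categorification of the weighting (the proposition preceding Proposition \ref{catfymag}, matching the source-localized formula of Corollary \ref{magform}), the left-hand side is the weighting $k^{0_P}$ of $(P, d_\varphi)$ and the right-hand side is the weighting $k^{0_Q}$ of $(Q, d_\psi)$, whence $k^{0_P} = k^{0_Q}$. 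By Proposition \ref{chmob} in the form generalized in Remark \ref{rankedposet}, these weightings equal $\pi_{(P, \varphi)}(-q)$ and $\pi_{(Q, \psi)}(-q)$ respectively, so $\pi_{(P, \varphi)}(-q) = \pi_{(Q, \psi)}(-q)$; substituting $-q$ for $q$ gives $\pi_{(P, \varphi)}(q) = \pi_{(Q, \psi)}(q)$.

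I expect no deep obstacle: once the adjunction is available, the statement is essentially bookkeeping across the cited results. The only genuinely non-formal points are (i) checking that $F$ and $G$ preserve the minimum elements and that $0_P, 0_Q$ are $T_1$, which is what unlocks the object-localized refinement of Corollary \ref{adinv}, and (ii) keeping the variance straight, i.e. making sure that the source-localized homology $\MH^{\ell, a}_\bullet$ is the one categorifying the weighting $k^a$ at the source, which is precisely the object that Proposition \ref{chmob} ties to the Poincar\'e polynomial.
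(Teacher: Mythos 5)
Your proposal is correct and follows essentially the same route as the paper, which derives the corollary immediately from Corollary \ref{adinv} (applied to the adjunction of Proposition \ref{galad}), Corollary \ref{magform}, and Proposition \ref{chmob} (in the generalized form of Remark \ref{rankedposet}). The only difference is that you spell out the hypothesis checks the paper leaves implicit --- that $F0_P = 0_Q$, $G0_Q = 0_P$, and that $0_P, 0_Q$ are $T_1$-objects so the source-localized magnitude homology and its identification with the weighting $k^0$ apply --- all of which are verified correctly.
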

\begin{eg}\label{arrIP}
Let $S$ be a subspace arrangement. Let $I(S), P(S)$ be posets with order preserving maps $\psi : I(S) \too \Z_{\geq 0}$ and $\varphi : P(S) \too \Z_{\geq 0}$ as in Examples \ref{arrI} and \ref{arrP}. We define poset maps $F : P(S) \too I(S)$ and $G : I(S) \too P(S)$ by $Fx = \cap_{t \in x} t$ and $Gy = \{s \in S\mid y \subset s\}$. Then we have $x \leq GFx$ and $FGy = y$ for any $x \in P(S)$ and $y \in I(S)$. Hence the pair $(F, G)$ is a Galois connection. Further, by the definitions of $\psi$ and $\varphi$, it is straightforward to check the assumptions in Proposition \ref{galad}. Thus we obtain that $\pi_{(I(S), \psi)}(q) = \pi_{(P(S), \varphi)}$ by Corollary \ref{poincare}.
\end{eg}

\subsection{MH as a Hochschild homology}\label{mhashh}
In the following, we describe the magnitude homology as {\it the Hochschild homology}. A relevant claim appears in Remark 5.11 of \cite{LS}, however, the description we give here is more ring theoretic. That is, we express the magnitude homology as the Hochschild homology of an algebra associated with the $\Fset$-category, which is a generalization of the category algebra. We start from basic conventions.
\begin{df}
{\it A filtered ring} is a filtered abelian group $R$ with a unital associative ring structure such that $\deg ab \leq \deg a + \deg b$.
\end{df}
\begin{df}
 Let $C$ be a $\Fset$-category. We define a filtered ring $P_C(\Z)$ by 
 \begin{itemize}
 \item $P_C(\Z) = \Z\Mor C$ with $(P_C(\Z))_{\ell} = \Z \{f \in \Mor C \mid \deg f \leq \ell\}$ as a filtered abelian group.
 \item For any $f, g \in \Mor C$, we define an associative product $\cdot$ by  $f\cdot g = \begin{cases}g\circ f & \text{ if } tf = sg, \\
 0 & \text{ otherwise}.\end{cases}$
 \end{itemize}
 We call $P_C(\Z)$ {\it the category algebra} of $C$. We also define an action of $P_C(\Z)$ on the abelian group $M_C(\Z) := \Z(\Ob X\times \Ob X)$ from the right and the left by $f\cdot (a, b) = \begin{cases} (sf, b) & \text{ if } tf = a , \\ 0 & \text{ otherwise},\end{cases}$ and $(a, b) \cdot f = \begin{cases} (a, tf) & \text{ if } sf = b , \\ 0 & \text{ otherwise}.\end{cases}$ 
\end{df}
Now we recall the Hochschild homology. Let $R$ be a unital associative ring, and $M$ be an $R$-bimodule, which are not filtered. {\it The Hochschild chain complex} $\HC_\bullet(R, M)$ of $R$ and $M$ is defined by $\HC_n(R, M) := R^{\otimes n}\otimes M$ and 
 \begin{align*}
 \del_n(r_1\otimes \dots \otimes r_{n} \otimes m ) &= r_2\otimes \dots \otimes r_{n} \otimes mr_1 \\
&+ \sum_{i=1}^{n-1}(-1)^{i}r_1\otimes \dots \otimes r_ir_{i+1} \otimes \dots \otimes r_{n}\otimes m \\
&+ (-1)^nr_1\otimes \dots \otimes r_{n}m,
\end{align*}
where $\otimes$ is taken over $\Z$. Its homology $\HH_\bullet(R, M)$ is called {\it Hochschild homology} of $(R, M)$. If $R$ is filtered, then the filtration induces again a filtration $F_\bullet$ on $\HC_\bullet(R, M)$, hence also on $\HH_\bullet(R, M)$, by
\[
F_\ell \HC_n(R, M) := \Z \{r_1\otimes \dots \otimes r_{n}\otimes m  \mid \sum_{i = 1}^{n}{\rm deg\ }r_i \leq \ell\},
\]
and 
\[
F_\ell \HH_n(R, M) := {\rm Im}({\sf H}_n F_\ell \HC_\bullet(R, M) \too \HH_n(R, M)).
\]
\begin{rem}
Such a filtration on the Hochschild chain complex is also considered by Brylinski (\cite{Br}) in the study of Poisson manifolds.
\end{rem}
Now we compare the chain complexes $\FC_\bullet C$ and $\HC_\bullet (P_C(\Z), M_C(\Z))$ with respect to the filtrations in the following.
\begin{lem}\label{injfilt}
Let $C$ be a $\Fset$-category. The homomorphisms 
\[
\FC_nC \too \HC_n(P_C(\Z), M_C(\Z)) ; (f_1, \dots, f_n) \mapsto f_1\otimes \dots \otimes f_n\otimes (tf_n, sf_1) \,
\]
for $n \geq 1$, and $\FC_0C \too \HC_0(P_C(\Z), M_C(\Z)) ; a \mapsto  (a, a)$ define an injective filtered chain map $\FC_\bullet C \too \HC_\bullet(P_C(\Z), M_C(\Z))$.
\end{lem}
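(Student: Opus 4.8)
The plan is to verify the four constituent properties in turn: that each $\Phi_n$ (writing $\Phi_n$ for the displayed homomorphism in degree $n$) is a well-defined homomorphism, that the family $\{\Phi_n\}$ commutes with the differentials, that it respects the two filtrations, and that it is injective. Since $\FC_n C$ is free abelian on the composable $n$-tuples and $\HC_n(P_C(\Z), M_C(\Z)) = (\Z\Mor C)^{\otimes n}\otimes \Z(\Ob C \times \Ob C)$ is free on the tensors $g_1\otimes\dots\otimes g_n\otimes(a,b)$, both well-definedness and injectivity are essentially formal. The map is prescribed on free generators and extended linearly, so it is a homomorphism; and it sends the generator $(f_1,\dots,f_n)$ to the generator $f_1\otimes\dots\otimes f_n\otimes(tf_n,sf_1)$, with distinct composable tuples carried to distinct basis tensors (they already differ in one recorded morphism $f_i$). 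Hence $\Phi_n$ maps a basis injectively into a basis and is injective; the degree-zero case $a\mapsto(a,a)$ is the same observation.

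The substantive step is the chain-map identity $\del^{\HC}_n\circ\Phi_n = \Phi_{n-1}\circ\del^{\FC}_n$, which I would check by expanding $\del^{\HC}_n\Phi_n(f_1,\dots,f_n)$ and matching it term by term against $\del^{\FC}_n = \sum_{i=0}^n(-1)^i d_i$, the alternating sum of the filtered-nerve face maps. The three types of Hochschild term correspond exactly to the three types of face. For $1\le i\le n-1$ the middle term $(-1)^i f_1\otimes\dots\otimes f_i f_{i+1}\otimes\dots\otimes f_n\otimes(tf_n,sf_1)$ equals $(-1)^i\Phi_{n-1}(d_i(f_1,\dots,f_n))$, once one uses that the category-algebra product is $f_i\cdot f_{i+1} = f_{i+1}\circ f_i$ (the composite exists because $tf_i = sf_{i+1}$) and that an inner face leaves the basepoint $(tf_n,sf_1)$ unchanged. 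The outer terms are where the bimodule structure of $M_C(\Z)$ enters: the first term $f_2\otimes\dots\otimes f_n\otimes(tf_n,sf_1)\cdot f_1$ uses the right action $(tf_n,sf_1)\cdot f_1 = (tf_n,tf_1)$, and since $sf_2 = tf_1$ this is $\Phi_{n-1}(d_0(f_1,\dots,f_n))$; dually the last term $(-1)^n f_1\otimes\dots\otimes f_{n-1}\otimes f_n\cdot(tf_n,sf_1)$ uses the left action $f_n\cdot(tf_n,sf_1) = (sf_n,sf_1)$, and since $tf_{n-1} = sf_n$ this is $(-1)^n\Phi_{n-1}(d_n(f_1,\dots,f_n))$. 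I would dispatch $n=1$ as a separate small computation, where $\del^{\HC}_1(f_1\otimes(tf_1,sf_1)) = (tf_1,tf_1) - (sf_1,sf_1)$ matches $\Phi_0(tf_1 - sf_1) = \Phi_0(\del^{\FC}_1 f_1)$.

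Finally, for the filtration I would observe that the Hochschild filtration on $\HC_n(P_C(\Z), M_C(\Z))$ counts only $\sum_i\deg f_i$ and ignores the module factor $M_C(\Z)$, so $\Phi_n$ carries $(\FC_n C)_\ell$ into $F_\ell\HC_n$ precisely because the filtration degree of $(f_1,\dots,f_n)$ is by definition $\sum_i\deg f_i$; thus $\Phi$ is a filtered chain map. I expect the only genuine obstacle to be the bookkeeping in the chain-map step: keeping the composition convention $f\cdot g = g\circ f$, the two one-sided actions on $M_C(\Z)$, and the basepoint $(tf_n,sf_1)$ mutually consistent across the outer faces $d_0$ and $d_n$, since it is exactly there that the composability identities $sf_2 = tf_1$ and $tf_{n-1} = sf_n$ must be invoked to reconcile the moved basepoint with the value of $\Phi_{n-1}$ on the shortened tuple. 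Everything else is routine, and it is worth noting that the argument lives entirely at the level of $\FC_\bullet C$, before passing to the associated graded, so the degree-additivity condition of the magnitude differential never intervenes.
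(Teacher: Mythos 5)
Your proof is correct and is precisely the direct verification that the paper compresses into ``It is straightforward'': the sign conventions, the identification $f_i\cdot f_{i+1}=f_{i+1}\circ f_i$, and the two bimodule actions moving the basepoint $(tf_n,sf_1)$ all check out against the paper's definitions. Nothing further is needed.
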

\begin{proof}
It is straightforward.
\end{proof}
We consider the chain complex $\FC_\bullet C$ as a subcomplex of $\HC_\bullet(P_C(\Z), M_C(\Z))$ in the following.
\begin{lem}\label{quotcont}
Let $C$ be a $\Fset$-category. The chain complexes 
 \[
 \HC_\bullet(P_C(\Z), M_C(\Z))/  \FC_\bullet C,
 \]
 and 
 \[
 F_\ell \HC_\bullet(P_C(\Z), M_C(\Z))/  (\FC_\bullet C)_{\ell}
 \]
 are contractible for any $\ell \in \R_{\geq 0}$.
\end{lem}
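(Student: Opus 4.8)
The plan is to build an explicit contracting homotopy $h_\bullet$ on each quotient complex, arranged so that the same formula works before and after imposing the filtration. Write a basis element of $\HC_n(P_C(\Z), M_C(\Z)) = P_C(\Z)^{\otimes n}\otimes M_C(\Z)$ as $x = f_1\otimes\dots\otimes f_n\otimes(a,b)$. By Lemma \ref{injfilt}, such an $x$ lies in the subcomplex $\FC_\bullet C$ precisely when the chain is composable and closed, i.e. $b = sf_1$, $tf_i = sf_{i+1}$ for all $i$, and $tf_n = a$; I will call these $n+1$ conditions the \emph{gaps} of $x$. For $x\notin\FC_\bullet C$ let $j(x)$ be the smallest defective gap, and define $h(x)$ by inserting an identity morphism at that gap:
\[
h(x) = \pm\begin{cases}
{\rm id}_b\otimes f_1\otimes\dots\otimes f_n\otimes(a,b) & j(x)=0,\\
f_1\otimes\dots\otimes f_j\otimes{\rm id}_{tf_j}\otimes f_{j+1}\otimes\dots\otimes f_n\otimes(a,b) & 1\le j(x)\le n-1,\\
f_1\otimes\dots\otimes f_n\otimes{\rm id}_a\otimes(a,b) & j(x)=n,
\end{cases}
\]
with a sign depending on $n$ and $j$, and $h(x)=0$ for $x\in\FC_\bullet C$. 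Since every inserted morphism is an identity, $\deg h(x)=\sum_i\deg f_i=\deg x$, so $h$ preserves the filtration exactly; hence it descends both to $\HC_\bullet/\FC_\bullet C$ and to each $F_\ell\HC_\bullet/(\FC_\bullet C)_\ell$, and a single computation treats the two assertions of the lemma simultaneously.

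The reason for inserting at the \emph{first} defective gap is that the defect forces one of the two new faces to vanish while the other returns $x$. Concretely, at gap $0$ one has $(a,b)\cdot{\rm id}_b=(a,b)$, so the outer face gives back $x$, whereas ${\rm id}_b\cdot f_1=0$ because $b\neq sf_1$; at an inner gap $j$ one has $f_j\cdot{\rm id}_{tf_j}=f_j$ but ${\rm id}_{tf_j}\cdot f_{j+1}=0$ because $tf_j\neq sf_{j+1}$; and at gap $n$ one has ${\rm id}_a\cdot(a,b)=(a,b)$ but $f_n\cdot{\rm id}_a=0$ because $tf_n\neq a$. Thus the face of $h(x)$ that re-collapses the inserted identity returns $x$, the neighbouring face is killed by the defect, and the plan is then to show that all remaining faces of $\del h(x)$ cancel termwise against the corresponding summands of $h(\del x)$, yielding $\del h + h\del = {\rm id}$ on the quotient.

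The hard part is precisely this last cancellation, that is, the sign bookkeeping in the standard ``extra degeneracy'' argument. The subtlety special to this situation is that the bimodule $M_C(\Z)=\Z(\Ob C\times\Ob C)$ enforces closure conditions at \emph{both} ends of a chain, so no single end-insertion can be the identity on the nose (inserting ${\rm id}_a$ at the right end already fails once $tf_n=a$), which is why the position $j(x)$ of the first defect must steer the construction. The key verification is that applying a face $d_i$ to a basis element either leaves its first defective gap in place or else yields a term that is matched by another summand of $h(\del x)$; once this is checked the homotopy is filtration preserving by construction, so both quotients in the statement are contractible. As a consistency check one may note the conceptual reason the result must hold: $\FC_\bullet C$ is, by its definition, the chain complex of the nerve of $\underline C$, while $\HC_\bullet(P_C(\Z),M_C(\Z))$ is a bar-type complex computing the same homology, so the quotient is acyclic; being a bounded-below complex of free abelian groups, it is then automatically contractible.
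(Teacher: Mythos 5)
Your overall strategy is exactly the one the paper uses: contract both quotients by a single homotopy that inserts an identity morphism at the first defective gap, noting that identities have degree $0$ so the homotopy preserves the filtration. However, the explicit formula you wrote down does not satisfy $\del h + h\del = \mathrm{id}$, and the failure is not a matter of sign bookkeeping. Your insertion rule mixes two conventions: at gaps $0,\dots,n-1$ you insert the identity of the object to the \emph{left} of the gap ($\mathrm{id}_b$, resp.\ $\mathrm{id}_{tf_j}$), so the face that returns $x$ is the one composing leftward ($d_0$, resp.\ $d_j$), while at gap $n$ you insert $\mathrm{id}_a$, the identity of the object to the \emph{right} of the gap, so the returning face is $d_{n+1}$. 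These conventions collide whenever the first defect of $x$ is the inner gap $n-1$ while gap $n$ is fine, because $d_n x$ then has its first defect at its own last gap and you switch rules. Concretely, take $n=2$, two non-identity morphisms $f_1 : u \to v$ and $f_2 : w \to z$ with $v \neq w$, and $x = f_1\otimes f_2\otimes(z,u)$, so $j(x)=1$. Then $h(x)=\pm f_1\otimes\mathrm{id}_v\otimes f_2\otimes(z,u)$, and one computes
\[
\del h(x) + h(\del x) \;=\; x \;\pm\; f_1\otimes\mathrm{id}_v\otimes(w,u)\;\pm\; f_1\otimes\mathrm{id}_w\otimes(w,u),
\]
where the second term comes from the last face of $h(x)$ and the third from $h(d_2x)$ (your last-gap rule inserts $\mathrm{id}_w$ there, not $\mathrm{id}_v$). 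The term $f_1\otimes\mathrm{id}_v\otimes(w,u)$ is a defective basis element, hence nonzero in the quotient, distinct from $x$ and from $f_1\otimes\mathrm{id}_w\otimes(w,u)$ since $v\neq w$, and it arises from no other summand of $\del h(x)+h(\del x)$; so no assignment of signs can make the identity hold.

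The repair is to use one convention uniformly, and then your argument goes through exactly as in the paper. The paper inserts $\mathrm{id}_{sf_{j+1}}$, the identity of the source of the morphism \emph{after} the gap, with the convention $sf_{n+1}=a$ (so $\mathrm{id}_{sf_1}$ at gap $0$, not $\mathrm{id}_b$) and sign $(-1)^{j+1}$; then the returning face is always the rightward one and all cross-terms cancel. Symmetrically, your leftward convention works if at the last gap you insert $\mathrm{id}_{tf_n}$ instead of $\mathrm{id}_a$ (then $d_n$ returns $x$ and $d_{n+1}$ is killed because $t\,\mathrm{id}_{tf_n}\neq a$), with sign $(-1)^j$ throughout. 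One further caution: your closing ``consistency check'' is not independent evidence, since the assertion that $\HC_\bullet(P_C(\Z),M_C(\Z))$ computes the homology of the nerve of $\underline{C}$ is essentially the content of the lemma being proved.
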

\begin{proof}
We first prove the latter contractibility. Let
 \[
 \alpha = \sum_{\lambda \in \Lambda}c_\lambda  f^\lambda _1\otimes \dots \otimes f^\lambda _n \otimes (a^\lambda , b^\lambda ) \in F_\ell \HC_n(P_C(\Z), M_C(\Z))/ (\FC_nC)_{\ell}.
 \]
 For each $\lambda  \in \Lambda $, let $k(\lambda) \geq 0$ be the minimum number such that $tf^\lambda_{k(\lambda)} \neq sf^\lambda_{k(\lambda) + 1}$, where we put $tf^\lambda_0 = b^\lambda$. We define a homomorphism 
 \[
 B^\ell_n : F_\ell \HC_n(P_C(\Z), M_C(\Z))/(\FC_nC)_\ell \too F_\ell \HC_{n+1}(P_C(\Z), M_C(\Z))/ (\FC_{n+1}C)_{\ell}
 \]
 by 
 \[
 B^{\ell}_n\alpha := \sum_{\lambda\in \Lambda}(-1)^{k(\lambda)+1}c_\lambda f^\lambda_1\otimes \dots \otimes f^\lambda_{k(\lambda)} \otimes \mathrm{id}_{sf^\lambda_{k(\lambda) + 1}} \otimes f^\lambda_{k(\lambda) + 1} \otimes \dots \otimes f^\lambda_n \otimes (a^\lambda, b^\lambda)
 \]
 for $n\geq 1$ and 
 \[
 B^{\ell}_0 (a, b) := - \mathrm{id}_a \otimes (a, b).
 \]
 
 Then it is straightforward to check that $\del_{n+1}B^\ell_n\alpha + B^\ell_{n-1}\del_n\alpha = \alpha$. Hence this defines a chain homotopy between the identity and the zero homomorphism on 
 \[
 F_\ell\HC_\bullet (P_C(\Z), M_C(\Z))/ (\FC_\bullet C)_{\ell}.
 \]
 It is easily seen that the homotopy $B^{\ell}_\bullet$ extends to a homotopy 
 \[
 B_\bullet : \HC_\bullet(P_C(\Z), M_C(\Z))/\FC_\bullet C \too  \HC_{\bullet +1}(P_C(\Z), M_C(\Z))/ \FC_{\bullet +1}C,
 \]
 which shows the contractibility of $\HC_\bullet(P_C(\Z), M_C(\Z))/  \FC_\bullet C$. This completes the proof.
\end{proof}
\begin{cor}\label{hocheq}
Let $C$ be a $\Fset$-category. We have  homotopy equivalences 
\[
\FC_\bullet C \simeq \HC_\bullet(P_C(\Z), M_C(\Z))
\]
and 
\[
(\FC_\bullet C)_\ell \simeq F_\ell \HC_\bullet(P_C(\Z), M_C(\Z))
\]
for any $\ell \in \R_{\geq0}$.
\end{cor}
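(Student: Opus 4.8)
The plan is to read off both equivalences from the two preceding lemmas by a single piece of homological algebra. By Lemma \ref{injfilt} the map $\FC_\bullet C \too \HC_\bullet(P_C(\Z), M_C(\Z))$ is an injective filtered chain map, so it sits in a short exact sequence of chain complexes
\[
0 \too \FC_\bullet C \too \HC_\bullet(P_C(\Z), M_C(\Z)) \too \HC_\bullet(P_C(\Z), M_C(\Z))/\FC_\bullet C \too 0,
\]
and likewise the inclusion $(\FC_\bullet C)_\ell \too F_\ell \HC_\bullet(P_C(\Z), M_C(\Z))$ gives a short exact sequence with cokernel $F_\ell \HC_\bullet(P_C(\Z), M_C(\Z))/(\FC_\bullet C)_\ell$. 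By Lemma \ref{quotcont} both cokernels are contractible, and in fact come equipped with the explicit contracting homotopy $B_\bullet$ constructed there. So the task reduces to the principle that a short exact sequence of chain complexes whose cokernel is contractible has its inclusion as a chain homotopy equivalence, provided the sequence splits in each degree.

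First I would verify the degreewise splitting. The formula of Lemma \ref{injfilt} sends a composable tuple $(f_1, \dots, f_n)$ to the basis element $f_1\otimes \dots \otimes f_n \otimes (tf_n, sf_1)$ of the free abelian group $\HC_n(P_C(\Z), M_C(\Z)) = P_C(\Z)^{\otimes n}\otimes M_C(\Z)$, and distinct tuples give distinct basis elements; hence $\FC_nC$ is identified with the span of a subset of a basis, its image is a direct summand, and the cokernel is the free abelian group on the complementary basis. The same holds after restricting to the filtration level $\ell$, since $F_\ell\HC_n$ and $(\FC_nC)_\ell$ are spanned by the corresponding sub-bases cut out by $\sum_i \deg f_i \leq \ell$. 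Thus both sequences are degreewise split with free cokernels.

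Granting the splitting, I would build the homotopy inverse explicitly. Writing $\HC_n \cong \FC_nC \oplus Q_n$ with $Q_\bullet$ the cokernel, the differential takes the block form $\del = \bigl(\begin{smallmatrix} \del^{\FC} & \phi \\ 0 & \del^{Q}\end{smallmatrix}\bigr)$, where $\del^2 = 0$ forces $\del^{\FC}\phi + \phi\del^{Q} = 0$. Using the contraction $s := B_\bullet$ of $Q_\bullet$ furnished by Lemma \ref{quotcont} (so that $\del^{Q} s + s\del^{Q} = \mathrm{id}_{Q}$), the assignment $r(x, y) = x - \phi s(y)$ defines a chain retraction of $\HC_\bullet$ onto $\FC_\bullet C$, and $H(x, y) = (0, s y)$ satisfies $\del H + H\del = \mathrm{id} - \iota r$, where $\iota$ denotes the inclusion. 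This exhibits $\iota$ as a chain homotopy equivalence, and running the identical argument for the filtered inclusion $(\FC_\bullet C)_\ell \too F_\ell\HC_\bullet$ yields the second equivalence.

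The one point to be careful about — and the only place where I expect any friction — is that contractibility of the cokernel must be used as genuine contractibility, not merely acyclicity: over $\Z$ a quasi-isomorphism need not be a homotopy equivalence, so invoking the long exact homology sequence alone would be insufficient. The explicit contraction $B_\bullet$ of Lemma \ref{quotcont} together with the degreewise splitting is exactly what upgrades the quasi-isomorphism to a genuine chain homotopy equivalence; equivalently, the mapping cone of $\iota$ is contractible because it is degreewise isomorphic to the contractible cokernel. Everything else is a routine block-matrix verification.
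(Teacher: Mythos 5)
Your proof is correct, but it takes a genuinely different route from the paper's. The paper argues softly: by Lemma \ref{quotcont} the cokernels are contractible, hence (via the long exact sequence) the inclusions of Lemma \ref{injfilt} are quasi-isomorphisms, and since all complexes involved are non-negatively graded and level-wise free they are fibrant-cofibrant in the projective model structure on ${\sf Ch}_{\geq 0}$, so the Whitehead theorem upgrades the quasi-isomorphism to a homotopy equivalence. You instead observe that the inclusion identifies $\FC_nC$ with the span of a subset of the standard basis of $P_C(\Z)^{\otimes n}\otimes M_C(\Z)$, so the short exact sequence is degreewise split with free cokernel, and you then transfer the explicit contraction $B_\bullet$ of Lemma \ref{quotcont} across the splitting by the block-matrix computation $r(x,y) = x - \phi s(y)$, $H(x,y) = (0, sy)$; I checked the identities $r\del = \del^{\FC} r$ and $\del H + H\del = \mathrm{id} - \iota r$ and they hold (using $\del^{\FC}\phi = -\phi\del^{Q}$). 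Your approach is more elementary and more informative --- it produces an actual homotopy inverse and homotopy rather than asserting their existence, and it correctly isolates the point that over $\Z$ acyclicity alone would not suffice, so the genuine contraction from Lemma \ref{quotcont} is doing real work. The paper's approach is shorter and does not require you to verify the degreewise splitting, at the cost of invoking model-categorical machinery; note that the paper only uses the contractibility of the cokernel to conclude ``quasi-isomorphism,'' whereas your argument uses the contraction itself, so the two proofs lean on Lemma \ref{quotcont} in slightly different ways. Both are complete.
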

\begin{proof}
By Lemma \ref{quotcont}, the inclusions 
\[
\FC_\bullet C \too \HC_\bullet(P_C(\Z), M_C(\Z))
\]
and 
\[
(\FC_\bullet C)_\ell \too F_\ell \HC_\bullet(P_C(\Z), M_C(\Z))
\]
defined in Lemma \ref{injfilt} are quasi isomorphisms. Since these chain complexes are level-wise free, which implies that they are fibrant-cofibrant objects in the projective model structure of ${\sf Ch}_{\geq 0}$, it is a homotopy equivalence by the Whitehead theorem (we refer to section 1.4 and 1.5 of \cite{Wei} for more concrete discussion). This completes the proof.
\end{proof}

\begin{rem}\label{GS}
When $C$ is a face poset of a finite simplicial complex $S$ as in Example \ref{simpcpx}, the algebra $P_C(\Z)$ is known as {\it the incidence algebra} of $S$, and Corollary \ref{hocheq} can be considered as a homological version of well-known Gerstenhaber-Schack's theorem (\cite{GS}) that asserts $H^\ast(S) \cong HH^{\ast}(P_C(\Z))$. We also remark that Grigor'yan--Muranov--S.-T. Yau (\cite{GMY}) give a proof of Gerstenhaber-Schack's theorem via {\it path cohomology}. Our proofs of Lemma \ref{quotcont} and Corollary \ref{hocheq} are inspired by Lemma 5.1 of \cite{GMY}.
\end{rem}
Now we define a filtered ring ${\rm Gr}P_C(\Z)$ by
 \begin{itemize}
 \item $P_C(\Z) = \Z\Mor C$ with $(P_C(\Z))_{\ell} = \Z \{f \in \Mor C \mid \deg f \leq \ell\}$ as a filtered abelian group.
 \item For any $f, g \in \Mor C$ we define an associative product $\cdot$ by  
 \[
 f\cdot g = \begin{cases}g\circ f & \text{ if } tf = sg \text{ and } \deg f + \deg g = \deg g\circ f, \\
 0 & \text{ otherwise}.\end{cases}
\]
 \end{itemize}
We also define an action of ${\rm Gr}P_C(\Z)$ on $M_C(\Z)$ similarly to that of $P_C(\Z)$. Note that 
\[
\bigcup_{\ell' < \ell} F_{\ell'} \HC_\bullet({\rm Gr}P_C(\Z), M_C(\Z)) \subset F_\ell \HC_\bullet({\rm Gr}P_C(\Z), M_C(\Z))
\]
and
\[
 F_\ell \HC_\bullet({\rm Gr}P_C(\Z), M_C(\Z)) \subset \HC_\bullet({\rm Gr}P_C(\Z), M_C(\Z))
\]
are direct summands. Hence we have 
\begin{align*}
 &{\sf H}_\bullet \left( F_\ell \HC_\bullet({\rm Gr}P_C(\Z), M_C(\Z)) / \bigcup_{\ell' < \ell} F_{\ell'} \HC_\bullet({\rm Gr}P_C(\Z), M_C(\Z)) \right) \\
 &\cong F_\ell\HH_\bullet({\rm Gr}P_C(\Z), M_C(\Z))/ \bigcup_{\ell' < \ell} F_{\ell'} \HH_\bullet({\rm Gr}P_C(\Z), M_C(\Z)) \\
 &=: {\rm Gr}_\ell \HH_\bullet({\rm Gr}P_C(\Z), M_C(\Z)),
\end{align*}
and
\[
\HH_\bullet({\rm Gr}P_C(\Z), M_C(\Z)) \cong \bigoplus_{\ell\geq 0} {\rm Gr}_\ell \HH_\bullet({\rm Gr}P_C(\Z), M_C(\Z)).
\]
Finally we obtain the following description of the magnitude homology as the Hochschild homology. 
\begin{cor}\label{hhdesc}
Let $C$ be a $\Fset$-category. The chain complexes $\MC^{\ell}_\bullet C$ and 
\[
F_\ell \HC_\bullet(P_C(\Z), M_C(\Z))/ \bigcup_{\ell' < \ell} F_{\ell'} \HC_\bullet(P_C(\Z), M_C(\Z))
\]
are homotopy equivalent for any $\ell \in \R_{\geq0}$. In particular, we have an isomorphism
\begin{align*}
\MH^{\ell}_\bullet C \cong Gr_\ell \HH_\bullet({\rm Gr}P_C(\Z), M_C(\Z))
\end{align*}
 for any $\ell \in \R_{\geq0}$. Hence we have $\bigoplus_{\ell\geq 0}\MH^{\ell}_\bullet C \cong \HH_\bullet({\rm Gr}P_C(\Z), M_C(\Z))$.
\end{cor}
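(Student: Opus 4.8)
The plan is to assemble the statement from three ingredients already available: the normalization of the magnitude complex, the filtered quasi-isomorphism of Corollary \ref{hocheq}, and the associated-graded description recorded in the computation displayed just before the statement. Throughout I abbreviate $\HC_\bullet(P_C(\Z),M_C(\Z))$ by $\HC_\bullet$ and write $Q_\ell$ for the quotient $F_\ell\HC_\bullet/\bigcup_{\ell'<\ell}F_{\ell'}\HC_\bullet$. First I would prove the homotopy equivalence $\MC^\ell_\bullet C\simeq Q_\ell$, which is the first assertion. By the normalization of Lemma \ref{GJ} we already have $\MC^\ell_\bullet C\simeq\wt{\MC}^\ell_\bullet C=(\FC_\bullet C)_\ell/(\FC_\bullet C)_{<\ell}$. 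The filtered inclusion of Lemma \ref{injfilt} carries $(\FC_\bullet C)_{\ell'}$ into $F_{\ell'}\HC_\bullet$ for every $\ell'$, hence induces a morphism of short exact sequences
\begin{equation*}
\xymatrix{
0 \ar[r] & (\FC_\bullet C)_{<\ell} \ar[r]\ar[d] & (\FC_\bullet C)_\ell \ar[r]\ar[d] & \wt{\MC}^\ell_\bullet C \ar[r]\ar[d] & 0 \\
0 \ar[r] & \bigcup_{\ell'<\ell}F_{\ell'}\HC_\bullet \ar[r] & F_\ell\HC_\bullet \ar[r] & Q_\ell \ar[r] & 0.
}
\end{equation*}
The middle vertical map is a quasi-isomorphism by Corollary \ref{hocheq}, and the left one is a quasi-isomorphism because homology commutes with the directed colimit defining the two unions while each stage $(\FC_\bullet C)_{\ell'}\to F_{\ell'}\HC_\bullet$ is a quasi-isomorphism. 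The five lemma applied to the two associated long exact sequences forces $\wt{\MC}^\ell_\bullet C\to Q_\ell$ to be a quasi-isomorphism, and since both complexes are degreewise free it is a homotopy equivalence by Whitehead's theorem, exactly as in the proof of Corollary \ref{hocheq}.

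Next I would identify $Q_\ell$, built from $P_C(\Z)$, with the analogous quotient built from ${\rm Gr}P_C(\Z)$ \emph{as chain complexes}. Both quotients have the same underlying graded group, namely the free group on tensors $f_1\otimes\dots\otimes f_n\otimes(a,b)$ with $\sum_i\deg f_i=\ell$ and $(a,b)\in\Ob C\times\Ob C$ arbitrary. The key observation is that modding out by $\bigcup_{\ell'<\ell}F_{\ell'}\HC_\bullet$ annihilates precisely the degree-lowering parts of the Hochschild differential: an inner face $f_i\cdot f_{i+1}$ survives in $Q_\ell$ exactly when $\deg f_i+\deg f_{i+1}=\deg(f_{i+1}\circ f_i)$, while the outer terms $(a,b)\cdot f_1$ and $f_n\cdot(a,b)$ survive only when the absorbed morphism has degree $0$. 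This is exactly the rule defining the product of ${\rm Gr}P_C(\Z)$ and its action on $M_C(\Z)$, so under the evident identification of generators the differential induced on $Q_\ell$ coincides with the Hochschild differential of ${\rm Gr}P_C(\Z)$ on $F_\ell\HC_\bullet({\rm Gr}P_C(\Z),M_C(\Z))/\bigcup_{\ell'<\ell}F_{\ell'}\HC_\bullet({\rm Gr}P_C(\Z),M_C(\Z))$. I expect this bookkeeping to be the main obstacle, since one must verify that the $P_C(\Z)$- and ${\rm Gr}P_C(\Z)$-differentials agree term by term after the quotient, including the delicate case of non-identity degree-$0$ morphisms, where the outer faces do not vanish and one must check that the two actions on $M_C(\Z)$ still match.

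Finally I would conclude. Taking homology of the identification of the previous step yields
\begin{equation*}
\MH^\ell_\bullet C\cong H_\bullet\Big(F_\ell\HC_\bullet({\rm Gr}P_C(\Z),M_C(\Z))\big/\textstyle\bigcup_{\ell'<\ell}F_{\ell'}\HC_\bullet({\rm Gr}P_C(\Z),M_C(\Z))\Big),
\end{equation*}
and the computation displayed immediately before the statement identifies the right-hand side with ${\rm Gr}_\ell\HH_\bullet({\rm Gr}P_C(\Z),M_C(\Z))$, the direct-summand property recorded there being precisely what makes the homology of the quotient equal to the graded piece of $\HH_\bullet$. Summing over $\ell\geq0$ and invoking the isomorphism $\HH_\bullet({\rm Gr}P_C(\Z),M_C(\Z))\cong\bigoplus_{\ell\geq0}{\rm Gr}_\ell\HH_\bullet({\rm Gr}P_C(\Z),M_C(\Z))$ from the same computation then gives the last assertion.
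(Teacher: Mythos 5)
Your proposal is correct and follows essentially the same route as the paper: the five lemma applied to the morphism of short exact sequences induced by the filtered inclusion of Lemma \ref{injfilt} (with the quasi-isomorphisms supplied by Lemma \ref{quotcont}/Corollary \ref{hocheq}), Whitehead for level-wise free complexes, the term-by-term identification of the quotient with the corresponding quotient for ${\rm Gr}P_C(\Z)$, and the direct-sum decomposition displayed before the statement. Your second paragraph merely spells out in more detail the identification that the paper records as an equality, and your colimit argument for the left vertical map is the same observation the paper makes by citing Lemma \ref{quotcont} for the unions.
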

\begin{proof}
By Lemma \ref{quotcont}, the inclusions 
\[
(\FC_\bullet C)_{\ell} \too F_{\ell} \HC_\bullet(P_C(\Z), M_C(\Z))
\]
and 
\[
\bigcup_{\ell' < \ell}(\FC_\bullet C)_{\ell'} \too \bigcup_{\ell' < \ell} F_{\ell'} \HC_\bullet(P_C(\Z), M_C(\Z))
\]
are quasi isomorphisms. Hence the five lemma shows that we have a quasi isomorphism
\[
\wt{\MC}^{\ell}_\bullet C \too F_\ell \HC_\bullet(P_C(\Z), M_C(\Z))/ \bigcup_{\ell' < \ell} F_{\ell'} \HC_\bullet(P_C(\Z), M_C(\Z)).
\]
Since they are level-wise free, this is a homotopy equivalence. The latter follows from the identification
\begin{align*}
&F_\ell \HC_\bullet(P_C(\Z), M_C(\Z))/ \bigcup_{\ell' < \ell} F_{\ell'} \HC_\bullet(P_C(\Z), M_C(\Z)) \\
&= F_\ell \HC_\bullet({\rm Gr}P_C(\Z), M_C(\Z))/\bigcup_{\ell' < \ell} F_{\ell'} \HC_\bullet({\rm Gr}P_C(\Z), M_C(\Z)),
\end{align*}
and the direct sum decomposition of $\HC_\bullet({\rm Gr}P_C(\Z), M_C(\Z))$. This completes the proof.
\end{proof}
\subsection{MH, a spectral sequence and homotopy relations}\label{mhandseq}
In the following, we construct a spectral sequence for a $\Fset$-category $C$ whose morphisms have integral degrees. It converges to the homology of underlying small category $\underline{C}$, and its first page is isomorphic to the magnitude homology. Further, a part of the second page is a well-known invariant {\it path homology} of digraphs. We also discuss the homotopy invariance of each page.
\begin{df}
A filtered set $X$ is {\it a $\Z_{\geq 0}$-filtered set} if $\deg x \in \Z_{\geq 0}$ for every $x \in X$. We denote the full subcategory of $\Fset$ that consists of $\Z_{\geq 0}$-filtered sets by $\Fset(\Z)$.
\end{df}
Note that the filtered chain complex $\FC_\bullet C$ is $\Z_{\geq 0}$-filtered for a $\Fset(\Z)$-category $C$. We denote the spectral sequence associated with this filtered chain complex by $E^{\bullet}_{\bullet \bullet}$. The following proposition, which is proved by the author in \cite{A}, shows a remarkable connection between the magnitude homology and the homotopy theory of digraphs.
\begin{prop}\label{speconv}
For a $\Fset(\Z)$-category $C$, we have the following.
\begin{enumerate}
\item $E^0_{p, q} = \wt{\MC}^{p}_{p+q}C, E^1_{p, q} = \MH^{p}_{p+q}C$.
\item If $C$ is a digraph, we have $E^2_{p, 0} = \wt{H}_pC$, where $\wt{H}_\bullet$ denotes {\it the reduced path homology} introduced by Grigor'yan--Muranov--Lin--S. -T. Yau et al. \cite{GLMY}.
\item It converges to the homology of the underlying small category $\underline{C}$ if it converges, in particular when $\max \{\deg f \mid f \in \Mor C\}$ is finite.
\end{enumerate}
\end{prop}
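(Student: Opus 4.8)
The plan is to recognise $E^\bullet_{\bullet\bullet}$ as the spectral sequence of the $\Z_{\geq 0}$-filtered chain complex $\FC_\bullet C$ and to handle the three assertions separately: (1) and (3) are formal once the graded pieces and the abutment are identified, whereas (2) is the genuinely new input supplied by \cite{A}. I use the homological convention in which the increasing filtration $F_p\FC_n C := (\FC_n C)_p$ gives $E^0_{p,q} = F_p\FC_{p+q}C/F_{p-1}\FC_{p+q}C$ with $d^0$ induced by $\del$, and $d^r : E^r_{p,q}\too E^r_{p-r,q+r-1}$. For (1) I would merely unwind definitions. Since $C$ is a $\Fset(\Z)$-category every morphism has integral degree, so $F_{p-1}\FC_{p+q}C = (\FC_{p+q}C)_{p-1} = (\FC_{p+q}C)_{<p}$, and hence $E^0_{p,q}$ is by definition the un-normalized group $\wt{\MC}^p_{p+q}C$. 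The differential $d^0$ retains precisely the face maps preserving total degree, which is exactly $\del^p_\bullet$, so $E^1_{p,q} = H_{p+q}(\wt{\MC}^p_\bullet C)$; invoking the normalization equivalence $\wt{\MC}^p_\bullet C \simeq \MC^p_\bullet C$ from Lemma \ref{GJ} identifies this with $\MH^p_{p+q}C$.

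For (3) the key point is that forgetting the filtration turns $\Z\FN_\bullet C$ into the chain complex of the ordinary nerve $\Z N_\bullet \underline{C}$, whence $H_\bullet(\FC_\bullet C) \cong H_\bullet(\underline{C})$. The filtration $F_\bullet$ is exhaustive and satisfies $F_{-1}\FC_n C = 0$, so it is bounded below; by the standard convergence theorem for such filtrations (\cite{Wei}) the spectral sequence converges to $H_\bullet(\FC_\bullet C)$ under the usual regularity hypothesis, and the abutment is then $H_\bullet(\underline{C})$. When $D := \max\{\deg f \mid f \in \Mor C\}$ is finite, each $\FC_n C$ carries the finite filtration $0 = F_{-1} \subset F_0 \subset \dots \subset F_{nD} = \FC_n C$, so the filtration is bounded in every total degree and strong convergence is automatic.

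The main obstacle, and the genuinely new content, is (2). Here $E^1_{p,0} = \MH^p_p C$, and since $\MC^p_{p+1}C = 0$ this group consists of the degree-$p$ magnitude cycles, which for a digraph live among the regular $p$-paths $v_0 \to \dots \to v_p$ whose successive steps are edges, i.e.\ among the allowed paths of \cite{GLMY}. I would then analyse the induced differential $d^1 : \MH^p_p C \too \MH^{p-1}_{p-1} C$ and show that, under this identification, it coincides with the regularized boundary operator on elementary paths, the degree-preservation clause in $\del^\bullet$ forcing exactly the $\del$-invariance that cuts the allowed paths down to the complex $\Omega_\bullet$ of \cite{GLMY}; the homology $E^2_{p,0} = \ker d^1/\im d^1$ is then the reduced path homology $\wt H_p C$. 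The delicate points are the sign bookkeeping, the regularization of degenerate faces, and checking that the passage from $E^1$ to $E^2$ imposes precisely $\del$-invariance and nothing more; these verifications are the substance of \cite{A}, to which I would appeal for the complete argument.
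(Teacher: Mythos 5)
Your proposal is correct and follows essentially the same route as the paper, which simply records that (1) and (3) are straightforward from the definition of the spectral sequence of the filtered complex $\FC_\bullet C$ and defers (2) to Theorem 1.2 of \cite{A}. Your unwinding of (1) via the identification $(\FC_{p+q}C)_{p-1}=(\FC_{p+q}C)_{<p}$ for integral degrees, your convergence argument for (3), and your appeal to \cite{A} for (2) all match the intended argument.
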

\begin{proof}
(1), (3) is straightforward. (2) is shown in Theorem 1.2 of \cite{A}.
\end{proof}
For $r \in \R_{\geq 0}$, let $I_r$ be a $\Fset$-category with two objects $0, 1$ and one non-trivial morphism $0 \to 1$ with degree $r$. The following is a generalization of the definition of natural transformation in $\Cat$, which induces a series of homotopy relations in $\Fsetcat$ as it will be stated in Theorem \ref{rhtpy}.
\begin{df}
Let $F, G : C \too D \in \Fsetcat$. {\it An $r$-natural transformation} $\tau : F \Rightarrow G$ is a functor $\tau : C \times I_r \too D$ such that $\tau(-, 0) = F(-)$ and $\tau(-, 1) = G(-)$.
\end{df}
\begin{eg}
\begin{enumerate}
\item A $0$-natural transformation is the one defined in Definition \ref{natran}.
\item A $1$-natural transformation restricted to the category {\sf DGrph} is exactly the $1$-{step homotopy of digraph homomorphisms} defined in \cite{GLMY}. It is shown in the same paper that the path homology is invariant under $1$-step homotopies.
\item Let $F, G : C \too D$ be 1-Lipschitz maps with $C, D$ being metric spaces. Then the existence of an $r$-natural transformation $\tau : F \Rightarrow G$ is equivalent to the condition that $d(Fa, Ga) \leq r$ for any $a \in C$, which is called $r$-{\it close} in the metric geometry.
\item An $r$-natural transformation induces an $r'$-natural transformation via the natural morphism $I_{r'} \too I_{r}$ if $r' \geq r$.
\end{enumerate}
\end{eg}
The following is a generalization of Proposition 5.7 in \cite{A} and Theorem 3.3 in \cite{GLMY}. It also contains Theorem \ref{natinv}.
\begin{thm}\label{rhtpy}
The $(r+1)$-page $E^{r+1}_{\bullet\bullet}$ is invariant under $r$-natural transformations. That is, $F, G : C \too D$ induces identical homomorphisms $E^{r+1}_{\bullet\bullet}C \too E^{r+1}_{\bullet\bullet}D$ if there exists an $r$-natural transformation $\tau : F \Rightarrow G$.
\end{thm}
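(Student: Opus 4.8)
The plan is to carry over verbatim the chain homotopy constructed in the proof of Proposition \ref{natinv}, but now to keep track of how much it raises the filtration. The only new input is that the ``vertical'' arrow supplied by the natural transformation now has degree $r$ instead of $0$; consequently the homotopy increases the filtration degree by at most $r$, and a standard comparison result for spectral sequences of filtered complexes then forces $\FC_\bullet F$ and $\FC_\bullet G$ to agree from the $(r+1)$-page onward.

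First I would fix, for each object $a \in \Ob C$, the morphism $\tau_a := \tau(\mathrm{id}_a \times (0 \to 1)) : Fa \too Ga$ in $D$. Since $\mathrm{id}_a \times (0 \to 1)$ has degree $0 + r = r$ in $C \times I_r$ and $\tau$ is a functor of $\Fset$-categories, we have $\deg \tau_a \leq r$. I would then define $H_k : \FC_k C \too \FC_{k+1} D$ by exactly the formula of Proposition \ref{natinv}, with $\tau(tf_i)$ now meaning $\tau_{tf_i}$. The underlying functor $\underline{\tau}$ is an ordinary natural transformation $\underline{F} \Rightarrow \underline{G}$ in $\Cat$ (because $\underline{I_r}$ is the poset $\{0 < 1\}$, independently of $r$), so the identity $\del_{k+1} H_k + H_{k-1}\del_k = \FC_k F - \FC_k G$ holds just as there; no new algebra is required. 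It then remains to bound the filtration shift: each summand of $H_k(f_1, \dots, f_k)$ is a tuple whose total degree is $\sum_{j} \deg(Ff_j) + \deg \tau_{tf_i} + \sum_j \deg(Gf_j) \leq \big(\sum_j \deg f_j\big) + r$, using $\deg Ff_j, \deg Gf_j \leq \deg f_j$ and $\deg \tau_{tf_i} \leq r$. Hence $H\big((\FC_\bullet C)_p\big) \subseteq (\FC_{\bullet+1} D)_{p+r}$ for every $p$, i.e.\ $H$ has filtration degree at most $r$.

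Finally I would invoke the comparison lemma for filtered complexes: a chain homotopy of filtration degree at most $r$ between two filtration-preserving chain maps induces equal maps on $E^{r+1}$ and on all later pages. For completeness I would give the short self-contained argument rather than cite it. Write $F_p := (\FC_\bullet C)_p$ and represent a class of $E^{r+1}_{p,q}$ by an element $x \in F_p$ with $\del x \in F_{p-r-1}$. Then $(\FC F - \FC G)(x) = \del(Hx) + H(\del x)$, where $Hx \in F_{p+r}$ and $\del(Hx) = (\FC F - \FC G)(x) - H\del x \in F_p$ (since $H\del x \in F_{(p-r-1)+r} = F_{p-1}$). Thus $Hx$ defines a class in $E^{r}_{p+r,\,\ast}$, and $d^r[Hx] = [\del(Hx)] = [(\FC F - \FC G)(x)]$ in $E^{r}_{p,\,\ast}$, the contribution $H\del x \in F_{p-1}$ vanishing there. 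Therefore the two representatives $(\FC F)(x)$ and $(\FC G)(x)$ differ by a $d^r$-boundary and become equal in $E^{r+1}_{p,q} = H(E^r, d^r)_{p,q}$, which is the assertion. Combined with Proposition \ref{speconv} this recovers Proposition \ref{natinv} as the case $r = 0$ on $E^1 = \MH$.

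The main obstacle is purely one of indexing conventions: one must pin down precisely why ``filtration shift $r$'' yields invariance of the $(r+1)$-page rather than the $r$-page, and confirm that $\FC F$ and $\FC G$ are themselves filtration-preserving so that they genuinely act on each page of the sequence. Everything else — the homotopy identity and the degree estimate — is a direct decoration of the proof of Proposition \ref{natinv}.
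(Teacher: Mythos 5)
Your proof is correct, but it takes a different route from the paper: the paper's entire proof of Theorem \ref{rhtpy} is a one-line citation of Proposition 3.9 of \cite{Egas}, whereas you inline both halves of the argument — the construction of the filtration-shifting homotopy and the comparison lemma for spectral sequences of filtered complexes. Your two ingredients are exactly right: the homotopy identity $\del_{k+1}H_k + H_{k-1}\del_k = \FC_k F - \FC_k G$ is unchanged from Proposition \ref{natinv} because the underlying natural transformation in $\Cat$ does not see $r$, and the only new content is the estimate $\deg \tau_a \leq r$, which gives $H\bigl((\FC_\bullet C)_p\bigr) \subseteq (\FC_{\bullet+1}D)_{p+r}$. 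Your self-contained proof of the comparison step is also sound; the one place worth tightening is the phrase ``the contribution $H\del x \in F_{p-1}$ vanishing there'': to conclude equality in $E^{r+1}_p = Z^{r+1}_p/(Z^r_{p-1} + \del Z^r_{p+r})$ you should note that $H\del x$ actually lies in $Z^r_{p-1}$ (indeed $\del H \del x = (\FC F - \FC G)(\del x) \in F_{p-r-1} = F_{(p-1)-r}$, using $\del^2 = 0$ and that $\FC F, \FC G$ preserve the filtration), not merely in $F_{p-1}$. What your approach buys is transparency: to apply the cited Proposition 3.9 one must in any case exhibit an $r$-homotopy between $\FC_\bullet F$ and $\FC_\bullet G$ in the sense of \cite{Egas}, which is precisely the degree estimate you carry out and which the paper leaves implicit. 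What the paper's approach buys is brevity and a direct link to the model structures on filtered complexes that motivate the surrounding discussion.
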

\begin{proof}
It is an immediate consequence of Proposition 3.9 in \cite{Egas}.
\end{proof}
\begin{rem}
In \cite{Egas}, Cirici et al. show that there is a cofibrantly generated model structure on the category of $\Z_{\geq 0}$-filtered chain complexes, where weak equivalences are exactly the morphisms inducing quasi isomorphisms on $E^r$. Theorem \ref{rhtpy} suggests that there should be some homotopy theory on $\Fsetcat$ that coincides with the above Ciricis' structure when we apply a functor $\FC_\bullet : \Fsetcat \too {\sf FCh}_{\geq 0}$. The work \cite{Car} by Carranza et al. seems to support this hypothesis. In that paper, they constructed a cofibration category structure on the category of digraphs, where weak equivalences are exactly the morpshisms inducing isomorphisms on the path homology, namely a part of $E^2$.
\end{rem}

\section{Fibrations in $\Fsetcat$}\label{fibrationsec}
In this section, we study ``fibrations'' in $\Fsetcat$, whose restriction to $\Met$ is originally introduced as {\it metric fibrations} by Leinster in \cite{L3}. It contains {\it the Grothendieck (op)fibration} in $\Cat$. A remarkable property of this notion is that the magnitude of a fibration split as the product of those of the ``fiber'' and the ``base''. It is well-known that there is a one to one correspondence between Grothendieck (op)fibrations and {\it the (op)lax functors} (2-category equivalence in precise). We generalize it to $\Fsetcat$. Further, we restrict our notion of fibrations to $\Met$, and give some examples. Due to the above one to one correspondence, we can find new examples of metric fibrations that are not considered in \cite{L3}. We remark that the Euler characteristic of categorical fibrations is also considered in \cite{L2}, and we also generalize it.
\subsection{pre-opfibrations}\label{pof}
We first recall ``fibrations in $\Cat$'' in the following. While there are variants of definitions of ``fibrations'' in $\Cat$, we adopt {\it pre-opfibration} here. We remark that the other notions of fibrations can be considered similarly in $\Fsetcat$, and they all coincide when we restrict them to $\Met$.
\begin{df}\label{nof}
Let $X$ be a small category. The following data $F : X \too \Cat$ is called {\it a normal oplax functor}.
\begin{enumerate}
    \item $Fx$ is a small category for any $x \in \Ob X$.
    \item $Ff : Fx \to Fx'$ is a functor for any $f : x \to x' \in \Mor X$.
    \item For any $x \in \Ob X$,  $F{\rm id}_x = {\rm id}_{Fx}$.
    \item For any $f : x \to x', g : x' \to x'' \in \Mor X$, there is a natural transformation $\tau_{f, g} : F(g\circ f) \Rightarrow Fg\circ Ff$ satisfying the following. For any $f : x \to x', g : x' \to x'', h : x'' \to x''' \in \Mor X$, we have $\tau_{f, {\rm id}_{x'}} = {\rm id}_{Ff}, \tau_{{\rm id}_{x}, f} = {\rm id}_{Ff}$, and the following commutative diagram :
    \begin{equation*}
    \xymatrix{
    F(h\circ (g\circ f)) \ar@{=>}[r]^-{\tau_{g\circ f, h}} \ar@{=}[d] & Fh \circ F(g\circ f) \ar@{=>}[r]^-{{\rm id}_{Fh} \bullet \tau_{f, g}} & Fh\circ (Fg\circ Ff) \ar@{=}[d] \\
    F((h \circ g)\circ f) \ar@{=>}[r]_-{\tau_{f, h\circ g}} & F(h\circ g) \circ Ff \ar@{=>}[r]_-{\tau_{g, h}\bullet {\rm id}_{Ff}} & (Fh \circ Fg)\circ Ff.
    }
    \end{equation*}
\end{enumerate}
\end{df}

\begin{df}\label{gc}
Let $F : X \too \Cat$ be a normal oplax functor. We construct a small category $E(F)$ as follows.
\begin{enumerate}
    \item $\Ob E(F) = \{(x, a)\ \mid a\in \Ob Fx, x \in \Ob X\}$,
    \item $E(F)((x, a), (x', b)) = \{(f, \xi) \mid f : x \to x' \in \Mor X, \xi : Ffa \to b \in \Mor Fx'\}$,
    \item For $(f, \xi) : (x, a) \to (x', b)$ and $(g, \theta) : (x', b) \to (x'', c)$, $(g, \theta)\circ (f, \xi) = (g\circ f, \theta \circ Fg(\xi) \circ \tau_{f, g}a)$, where $\tau_{f, g}a : F(g\circ f)a \to Fg\circ Ffa$ is a component of the natural transformation $\tau_{f, g}$.
\end{enumerate}
\end{df}
Note that we have a natural projection functor  $\pi_F : E(F) \too X$ defined by $\pi_F(x, a) = x$ and $\pi_F(f, \xi) = f$. In the following, we use the convention ``$\varphi : u \too u' \in U$'' for arbitrary category $U$ if $u, u' \in \Ob U$ and $\varphi \in \Mor U$. 
\begin{df}\label{wcartdf}
Let $\pi : X \too Y \in \Cat$ and $f : x \to x' \in X$. We say that $f$ is {\it weakly $\pi$-cartesian} if it satisfies the following : For any $g : x \to x'' \in X$ with $\pi g = \pi f$, there uniquely exists $h : x' \to x'' \in X$ such that $g = h \circ f$ and $\pi h = {\rm id}_{\pi x'}$.
\end{df}
\begin{df}\label{pofdf}
Let $\pi : X \too Y \in \Cat$. We say that $\pi$ is {\it a pre-opfibration} if it satisfies the following : For any $f : \pi x \to y \in Y$, there exists a weakly $\pi$-cartesian morphism  $\tilde{f} : x \to z \in \Mor X$ such that $\pi \tilde{f} = f$.
\end{df}
The following well-known propositions show that there is a one to one correspondence between pre-opfibtaions and normal oplax functors. For the detail, see part B of \cite{JPT}, for example. 
\begin{prop}\label{ftfib}
For any normal oplax functor $F : X \too \Cat$, the natural projection $\pi_F : E(F) \to X$ is a pre-opfibration. 
\end{prop}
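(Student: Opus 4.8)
The plan is to exhibit an explicit weakly $\pi_F$-cartesian lift and then verify the universal property by a direct computation with the composition rule of Definition \ref{gc}, where the normality of $F$ is what makes everything collapse. Throughout I keep in mind that the total category $E(F)$ plays the role of ``$X$'' in Definition \ref{pofdf}, while the base $X$ plays the role of ``$Y$''.

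First I would construct the lift. Given an object $(x, a) \in E(F)$ and a morphism $f : x \to y \in X$ (so that $\pi_F(x, a) = x$), the natural candidate is
\[
\tilde{f} = (f, \mathrm{id}_{Ffa}) : (x, a) \too (y, Ffa).
\]
This is a well-defined morphism of $E(F)$ since $\mathrm{id}_{Ffa} : Ffa \to Ffa$ lies in $Fy$, and clearly $\pi_F \tilde{f} = f$. Hence the existence clause of Definition \ref{pofdf} reduces to checking that $\tilde{f}$ is weakly $\pi_F$-cartesian.

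Next I would verify weak cartesianness in the sense of Definition \ref{wcartdf}. Take any $(g, \theta) : (x, a) \to (x'', c)$ with $\pi_F(g, \theta) = \pi_F(\tilde{f})$, i.e. $g = f$; this forces $x'' = y$ and $\theta : Ffa \to c \in Fy$. Any factorizing morphism $h$ must satisfy $\pi_F h = \mathrm{id}_y$, so it has the form $h = (\mathrm{id}_y, \psi)$, and since $F$ is normal we have $F\mathrm{id}_y = \mathrm{id}_{Fy}$, so $\psi : Ffa \to c$. Computing the composite via Definition \ref{gc} gives
\[
(\mathrm{id}_y, \psi) \circ (f, \mathrm{id}_{Ffa}) = \big(\mathrm{id}_y \circ f,\ \psi \circ F(\mathrm{id}_y)(\mathrm{id}_{Ffa}) \circ \tau_{f, \mathrm{id}_y} a\big).
\]
Here the two normality axioms do all the work: $\mathrm{id}_y \circ f = f$, the functor $F(\mathrm{id}_y) = \mathrm{id}_{Fy}$ sends $\mathrm{id}_{Ffa}$ to $\mathrm{id}_{Ffa}$, and $\tau_{f, \mathrm{id}_y} = \mathrm{id}_{Ff}$ by Definition \ref{nof}(4), so its component at $a$ is $\mathrm{id}_{Ffa}$. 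Therefore $h \circ \tilde{f} = (f, \psi)$, which equals $(g, \theta) = (f, \theta)$ exactly when $\psi = \theta$. This simultaneously yields existence of the factorization (take $h = (\mathrm{id}_y, \theta)$) and its uniqueness.

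I do not expect a genuine obstacle here, as the argument is essentially bookkeeping; the one point requiring care is invoking the normality conditions $F\mathrm{id} = \mathrm{id}$ and $\tau_{f, \mathrm{id}} = \mathrm{id}_{Ff}$ at precisely the right places to reduce the composite $\psi \circ F(\mathrm{id}_y)(\mathrm{id}_{Ffa}) \circ \tau_{f, \mathrm{id}_y} a$ to $\psi$. If these axioms were dropped (i.e. for a general, non-normal oplax functor), the lift $(f, \mathrm{id}_{Ffa})$ would not in general be cartesian, so the normality hypothesis is exactly what the proof consumes.
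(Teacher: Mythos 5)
Your proof is correct and follows essentially the same route as the paper: the paper defers Proposition \ref{ftfib} to its filtered generalization (Proposition \ref{nofpof}), whose proof uses exactly your lift $\tilde{f} = (f, \mathrm{id}_{Ffa})$ and the same normality-based computation $(\mathrm{id}_{x'}, \xi)\circ(f,\mathrm{id}_{Ffa}) = (f,\xi)$ to establish existence and uniqueness of the factorization. The only content the paper adds is the degree bookkeeping required in the $\Fsetcat$ setting, which is vacuous for the $\Cat$ statement you were asked to prove.
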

\begin{proof}
It will be shown in Proposition \ref{nofpof}.
\end{proof}
\begin{prop}\label{fibft}
For any pre-opfibration $\pi : X \too Y$, we can construct a normal oplax functor $F_\pi : Y \too \Cat$ such that there is an isomorphism $X \cong E(F_\pi) \in \Cat$ that commutes with $\pi$ and $\pi_{F_\pi}$. Furthermore, we have $F = F_{\pi_F}$ for any normal oplax functor $F : X \too \Cat$.
\end{prop}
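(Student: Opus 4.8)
The plan is to carry out the usual Grothendieck-style reconstruction of an oplax functor from a fibration, now phrased for pre-opfibrations. First I would fix once and for all a choice of weakly cartesian lifts (a \emph{cleavage}): for every $f : y \to y' \in Y$ and every object $a$ with $\pi a = y$, Definition \ref{pofdf} provides a weakly $\pi$-cartesian morphism $\wt f_a : a \to z$ with $\pi \wt f_a = f$, and I fix one such lift, insisting that the lift of every identity $\mathrm{id}_y$ at $a$ be $\mathrm{id}_a$ (which is legitimate since $\mathrm{id}_a$ is visibly weakly $\pi$-cartesian). I then set $F_\pi y$ to be the fibre $\pi^{-1}(y)$, the subcategory of $X$ of objects $x$ with $\pi x = y$ and morphisms $\varphi$ with $\pi \varphi = \mathrm{id}_y$. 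For $f : y \to y'$ I put $F_\pi f (a) = t\wt f_a$ on objects, and on a fibre morphism $\alpha : a \to a'$ I define $F_\pi f(\alpha)$ to be the unique morphism in $\pi^{-1}(y')$ obtained by applying the weak cartesian property of $\wt f_a$ to the composite $\wt f_{a'} \circ \alpha$, which lies over $f = \pi\wt f_a$.

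Next I would verify the axioms of Definition \ref{nof}. Functoriality of each $F_\pi f$ and normality ($F_\pi\mathrm{id}_y = \mathrm{id}_{\pi^{-1}(y)}$, forced by the identity-lift convention) both reduce to the uniqueness clause of weak cartesianness. For the oplax structure I would, for composable $f : y \to y'$ and $g : y' \to y''$, define the component $\tau_{f,g}(a)$ to be the unique fibre morphism $F_\pi(g\circ f)(a) \to F_\pi g \circ F_\pi f(a)$ through which the composite $\wt g_{t\wt f_a}\circ \wt f_a$ factors the chosen lift $\wt{(g\circ f)}_a$. Naturality of $\tau_{f,g}$, the unit conditions $\tau_{f,\mathrm{id}} = \mathrm{id}$ and $\tau_{\mathrm{id},f}=\mathrm{id}$, and the coherence diagram in Definition \ref{nof} are again immediate consequences of the uniqueness of weakly cartesian factorisations, so these checks are routine though slightly tedious.

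Then I would produce the isomorphism $X \cong E(F_\pi)$. On objects the assignment $(y, a) \mapsto a$ is a bijection onto $\Ob X$, since $y = \pi a$ is recovered from $a$. On morphisms, given $\varphi : a \to b$ in $X$ with $f := \pi\varphi$, the weak cartesian property of $\wt f_a$ yields a unique $\xi : F_\pi f(a) \to b$ in $\pi^{-1}(y')$ with $\xi \circ \wt f_a = \varphi$, and $(f, \xi) \mapsto \xi \circ \wt f_a$ is the inverse assignment; this gives a bijection $X(a,b) \cong E(F_\pi)((\pi a, a),(\pi b, b))$. That these bijections respect identities and composition follows by unwinding the composition rule of Definition \ref{gc} against the definition of $\tau_{f,g}$, and the isomorphism commutes with the projections because $\pi_{F_\pi}(\pi a, a) = \pi a$.

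Finally, for $F = F_{\pi_F}$ I would run the construction on the pre-opfibration $\pi_F : E(F) \too X$ (Proposition \ref{ftfib}) using the canonical cleavage in which the lift of $f : x \to x'$ at $(x,a)$ is $(f, \mathrm{id}_{Ffa})$; with this choice the fibre over $x$ is exactly $Fx$, the induced functor $F_{\pi_F}f$ is $Ff$, and the reconstructed $\tau$ coincides with the given one, so $F_{\pi_F}=F$ on the nose. The one genuine subtlety to flag is that $F_\pi$ depends on the chosen cleavage, so the statement is read as asserting the existence of such an $F_\pi$; the main labour, and the only place where care is needed, is the coherence of the $\tau_{f,g}$, all of which is dictated by the universal property and hence forced rather than invented.
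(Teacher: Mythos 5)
Your proposal is correct and follows essentially the same route as the paper: fix a cleavage of weakly $\pi$-cartesian lifts (normalized on identities), take $F_\pi y = \pi^{-1}y$ with $F_\pi f$ and $\tau_{f,g}$ induced by the uniqueness clause of weak cartesianness, and realize the isomorphism $E(F_\pi)\cong X$ via $(f,\xi)\mapsto \xi\circ\tilde f_a$, recovering $F=F_{\pi_F}$ from the canonical cleavage $(f,\mathrm{id}_{Ffa})$. The paper's own argument is only a sketch, and your write-up supplies the same construction with comparable or slightly greater detail (e.g.\ the coherence checks for $\tau_{f,g}$ and the cleavage-dependence caveat).
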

\begin{proof}[Sketch of proof]
For $y \in \Ob Y$, we define the small category $\pi^{-1}y$ by $\Ob \pi^{-1}y = \{x \in \Ob X \mid \pi x = y\}$ and $\Mor \pi^{-1}y = \{f \in \Mor X \mid \pi f = {\rm id}_y\}$. For $g : y \to y' \in Y$ and $x \in \Ob X$ with $\pi x = y$, we fix a weakly $\pi$-cartesian lift $\tilde{g}_x$ of $g$ with $s\tilde{g}_x = x$ by the axiom of choice. We choose them so that $\widetilde{({\rm id}_y)}_x = {\rm id}_x$. Note that any morphism $f : x \to x' \in \pi^{-1}y$ induce a unique morphism $t\tilde{g}_x \to t\tilde{g}_{x'}$ that produces a functor $\tilde{g} : \pi^{-1}y \to \pi^{-1}y'$. We define the normal oplax functor $F_\pi : Y \too \Cat$ by $F_\pi y = \pi^{-1}y$ and $F_\pi g = \tilde{g}$ for any $y \in \Ob Y$ and $g : y \to y' \in Y$. Then it is straightforward to check that $F_\pi$ is indeed a normal oplax functor. Next we show that $X \cong E(F_\pi) \in \Cat$ with suitable compatibility. Note that we have $\Ob E(F_\pi) = \{(y, x) \in \Ob Y \times \Ob X \mid \pi x = y\}$ and $E(F_\pi)((y, x), (y', x')) = \{(g, f)  \mid g : y \to y', f : t\tilde{g}_x \to x'\}$. We define the functor $\varphi : E(F_\pi) \too X$ by $\varphi (y, x) = x$ and $\varphi (g, f) = f \circ \tilde{g}_x$ for any $(y, x), (y', x') \in \Ob E(F_\pi)$ and $(g, f) \in E(F_\pi)((y, x), (y', x'))$. It is straightforward to check that $\varphi$ is an isomorphism with the desired compatibility. Finally, it is easily checked that we can choose the lifts of $g$'s to construct $F_\pi$ so that $F = F_{\pi_F}$. This completes the proof.
\end{proof}
Now we generalize the above definitions to $\Fsetcat$ in the following.
\begin{df}\label{nofdffilt}
Let $X$ be a $\Fset$-category. The following data $F : X \too \Fsetcat$ is called {\it a normal oplax functor}.
\begin{enumerate}
    \item $F$ is a normal oplax functor in the sense of Definition \ref{nof} when we forget the filtrations,
    \item $Ff : Fx \to Fx' \in \Fsetcat$ for any $f : x \to x' \in \Mor X$,
    \item For any $f : x \to x', g : x' \to x'' \in \Mor X$ and for any $a \in Fx$, $\deg \tau_{f, g}a \leq \deg f + \deg g - \deg g\circ f$.
\end{enumerate}
\end{df}
\begin{df}
Let $F : X \too \Fsetcat$ be a normal oplax functor. We construct a $\Fset$-category $E(F)$ as follows.
\begin{enumerate}
    \item When we forget the filtration, $E(F)$ coincides with the one defined in Definition \ref{gc}.
    \item $\deg (f, \xi) := \deg f + \deg \xi$ for any $(f, \xi) \in \Mor E(F)$.
\end{enumerate}
\end{df}
It is straightforward to check that the above $E(F)$ is indeed a $\Fset$-category. Note that we have a natural projection $\pi_F : E(F) \too X \in \Fsetcat$ defined by $\pi_F(x, a) = x$ and $\pi_F(f, \xi) = f$. For a normal oplax functor $F : X \too \Fsetcat$, we can consider $X$ and $Fx$'s as the ``base'' and the ``fibers'' of $E(F)$ respectively. The following definitions contain Definitions \ref{wcartdf} and \ref{pofdf} by considering the inclusion $\Cat \too \Fsetcat$ of Proposition \ref{catem}.
\begin{df}\label{wcart}
Let $\pi : X \too Y \in \Fsetcat$ and $f : x \to x' \in X$. We say that $f$ is {\it weakly $\pi$-cartesian} if it satisfies the following : For any $g : x \to x'' \in X$ with $\pi g = \pi f$, there uniquely exists $h : x' \to x'' \in X$ such that $g = h \circ f$ and $\pi h = {\rm id}_{\pi x'}$. Further, it satisfies $\deg g = \deg f + \deg h$.
\end{df}
\begin{df}\label{preopfib}
Let $\pi : X \too Y \in \Fsetcat$. We say that $\pi$ is {\it a pre-opfibration} if it satisfies the following : For any $f : \pi x \to y \in Y$, there exists a weakly $\pi$-cartesian morphism  $\tilde{f} : x \to z \in \Mor X$ such that $\pi \tilde{f} = f$ and $\deg f = \deg \tilde{f}$. 
\end{df}
Now we have the following propositions that are generalizations of Propositions \ref{ftfib} and \ref{fibft}.
\begin{prop}\label{nofpof}
Let $F : X \too \Fsetcat$ be a normal oplax functor. Then the natural projection $\pi_F : E(F) \too X$ is a pre-opfibration.
\end{prop}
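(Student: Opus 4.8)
The plan is to adapt the classical argument behind Proposition \ref{ftfib} verbatim on underlying categories and then to check compatibility with the degrees, which is the only genuinely new ingredient. Given an object $(x, a) \in \Ob E(F)$ and a morphism $f : x \to y$ in $X$ (so that $\pi_F(x, a) = x$), I would propose as the lift the morphism $\tilde{f} := (f, \mathrm{id}_{Ffa}) : (x, a) \to (y, Ffa)$. By construction $\pi_F \tilde{f} = f$, and since $\deg \mathrm{id}_{Ffa} = 0$ the degree formula $\deg(f, \xi) = \deg f + \deg \xi$ gives $\deg \tilde{f} = \deg f$, which is exactly the degree requirement in Definition \ref{preopfib}. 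It then remains to verify that $\tilde{f}$ is weakly $\pi_F$-cartesian in the sense of Definition \ref{wcart}.

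For the weak cartesian property, I would take any $g : (x, a) \to x''$ in $E(F)$ with $\pi_F g = f$. The first component of $g$ is then $f$, so $x'' = (y, c)$ for some $c \in \Ob Fy$ and $g = (f, \xi)$ with $\xi : Ffa \to c$ in $Fy$. I claim the unique $h$ with $\pi_F h = \mathrm{id}_y$ and $g = h \circ \tilde{f}$ is $h := (\mathrm{id}_y, \xi)$. Unwinding the composition rule of $E(F)$ (Definition \ref{gc}) gives $h \circ \tilde{f} = (\mathrm{id}_y \circ f,\ \xi \circ F(\mathrm{id}_y)(\mathrm{id}_{Ffa}) \circ \tau_{f, \mathrm{id}_y} a)$; here the normality of $F$ supplies $F(\mathrm{id}_y) = \mathrm{id}_{Fy}$ and $\tau_{f, \mathrm{id}_y} = \mathrm{id}_{Ff}$ (axiom (4) of Definition \ref{nof}), so the second component collapses to $\xi$ and $h \circ \tilde{f} = (f, \xi) = g$. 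Uniqueness is immediate: any admissible $h$ must have first component $\mathrm{id}_y$ (since $\pi_F h = \mathrm{id}_y$), and the same computation forces its second component to equal $\xi$.

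Finally I would confirm the degree additivity demanded in Definition \ref{wcart}: since $\deg h = \deg \mathrm{id}_y + \deg \xi = \deg \xi$ while $\deg \tilde{f} = \deg f$, we obtain $\deg \tilde{f} + \deg h = \deg f + \deg \xi = \deg(f, \xi) = \deg g$, as required. Assembling the three checks shows that every $f : \pi_F(x, a) \to y$ admits a weakly $\pi_F$-cartesian lift of the same degree, which is precisely the definition of a pre-opfibration.

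The underlying categorical content is literally that of Proposition \ref{ftfib}, so I expect no difficulty there; the part demanding care is the degree bookkeeping, specifically that the canonical lift has degree exactly $\deg f$ and that the factorization $g = h \circ \tilde{f}$ splits the degree additively. Both reduce cleanly to $\deg(f, \xi) = \deg f + \deg \xi$ together with $\deg \mathrm{id}_\bullet = 0$. It is worth noting that the oplax coherence bound of Definition \ref{nofdffilt}(3) does not actually intervene in this particular argument, precisely because the composite involves only the trivial coherence datum $\tau_{f, \mathrm{id}_y} = \mathrm{id}_{Ff}$ of degree $0$; that bound will instead be what is needed when one verifies that $E(F)$ is a well-defined $\Fset$-category and, later, in the converse direction generalizing Proposition \ref{fibft}.
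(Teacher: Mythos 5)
Your proof is correct and follows essentially the same route as the paper's: the same canonical lift $\tilde{f} = (f, \mathrm{id}_{Ffa})$, the same factorization $h = (\mathrm{id}_y, \xi)$ using normality ($F\mathrm{id} = \mathrm{id}$, $\tau_{f,\mathrm{id}} = \mathrm{id}$), and the same degree bookkeeping. Your closing remark that the oplax coherence bound of Definition \ref{nofdffilt}(3) is not needed here is a nice observation consistent with the paper, which likewise only invokes it in the converse direction (Proposition \ref{preoplax}).
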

\begin{proof}
 Let $f : x \to x' \in X$. For any $a \in Fx$, we define $\tilde{f} = (f, {\rm id}_{Ffa}) : (x, a) \to (x', Ffa)$. It is immediate to obtain $\pi_F \tilde{f} = f$ and $\deg f = \deg \tilde{f}$. Now we show that $\tilde{f}$ is weakly $\pi_F$-cartesian. Let $b \in Fx'$ and $\xi : Ffa \to b \in Fx'$. If we have a morphism $h = (u, \theta) : (x', Ffa) \to (x', b)$ with $h\circ \tilde{f} = (f, \xi)$ and $\pi_F h = {\rm id}_{x'}$, then it is immediate to see that we should have $h = ({\rm id}_{x'}, \xi)$. Conversely, we have $({\rm id}_{x'}, \xi) \circ (f, {\rm id}_{Ffa}) = (f, \xi \circ \tau_{f, {\rm id}_{x'}}) = (f, \xi)$. Further,  we have $\deg (f, \xi) = \deg f + \deg \xi = \deg (f, {\rm id}_{Ffa}) + \deg ({\rm id}_{x'}, \xi)$. This completes the proof.
\end{proof}
\begin{prop}\label{preoplax}
For any pre-opfibration $\pi : X \too Y$, we can construct a normal oplax functor $F_\pi : Y \too \Fsetcat$ such that there is an isomorphism $X \cong E(F_\pi) \in \Fsetcat$ that commutes with $\pi$ and $\pi_{F_\pi}$. Furthermore, we have $F = F_{\pi_F}$ for any normal oplax functor $F : X \too \Fsetcat$.
\end{prop}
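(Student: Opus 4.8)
The plan is to run the construction from the sketch of Proposition \ref{fibft} verbatim at the level of underlying small categories, and then to check that every piece of data it produces is compatible with the filtrations. The conceptual content is already contained in the unfiltered statement, so the only genuinely new work is the degree bookkeeping, and this is exactly what the strengthened Definitions \ref{wcart} and \ref{preopfib} are designed to supply.

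First I would define, for each $y \in \Ob Y$, the fiber $F_\pi y := \pi^{-1}y$ as an $\Fset$-category by restricting the filtration of $X$ to the morphisms $f$ with $\pi f = {\rm id}_y$. Using Definition \ref{preopfib} I would then choose, for every $g : y \to y' \in Y$ and every $x \in \pi^{-1}y$, a weakly $\pi$-cartesian lift $\tilde{g}_x : x \to t\tilde{g}_x$ with $\pi\tilde{g}_x = g$ and, crucially, $\deg \tilde{g}_x = \deg g$, normalized so that $\widetilde{({\rm id}_y)}_x = {\rm id}_x$. These determine functors $F_\pi g = \tilde{g} : \pi^{-1}y \to \pi^{-1}y'$ exactly as before. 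To see that each $\tilde{g}$ is a morphism in $\Fsetcat$, i.e. degree non-increasing, note that for $f : x \to x' \in \pi^{-1}y$ the image $\tilde{g}f$ is the unique $h$ with $\tilde{g}_{x'}\circ f = h \circ \tilde{g}_x$ and $\pi h = {\rm id}_{y'}$; the degree identity of Definition \ref{wcart} gives $\deg h = \deg(\tilde{g}_{x'}\circ f) - \deg \tilde{g}_x \leq (\deg g + \deg f) - \deg g = \deg f$, which is precisely condition (2) of Definition \ref{nofdffilt}.

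Next I would verify that the oplax coherence data lives in the correct degrees. For composable $f : y \to y'$ and $g : y' \to y''$ the component $\tau_{f, g}x$ is the unique fiber morphism with $\tilde{g}_{t\tilde{f}_x}\circ \tilde{f}_x = \tau_{f, g}x \circ \widetilde{g\circ f}_x$; the same degree identity yields $\deg \tau_{f, g}x = \deg(\tilde{g}_{t\tilde{f}_x}\circ \tilde{f}_x) - \deg \widetilde{g\circ f}_x \leq (\deg g + \deg f) - \deg(g\circ f)$, which is exactly the bound in condition (3) of Definition \ref{nofdffilt}. The coherence axioms of Definition \ref{nof} themselves are already satisfied after forgetting filtrations, so $F_\pi$ is a genuine normal oplax functor into $\Fsetcat$.

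Then I would take the isomorphism $\varphi : E(F_\pi) \too X$, $\varphi(g, f) = f \circ \tilde{g}_x$, produced in the unfiltered proof and check that it is an isomorphism in $\Fsetcat$, i.e. degree-preserving. Here lies the step I expect to be the only real obstacle: one must show that composition along a cartesian lift is degree-additive rather than merely subadditive. Applying the weak $\pi$-cartesianness of $\tilde{g}_x$ to the morphism $f\circ \tilde{g}_x$, whose image under $\pi$ is $g = \pi\tilde{g}_x$, and invoking uniqueness identifies the resulting factor with $f$ itself, whence $\deg(f\circ\tilde{g}_x) = \deg \tilde{g}_x + \deg f = \deg g + \deg f = \deg(g, f)$. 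The same argument applied to an arbitrary $u : x \to x'$ (with $g := \pi u$) shows that $\varphi^{-1}$ is degree-preserving, so $\varphi$ is an $\Fsetcat$-isomorphism commuting with $\pi$ and $\pi_{F_\pi}$. Finally, $F = F_{\pi_F}$ follows as in the unfiltered case by taking the canonical lifts $(f, {\rm id}_{Ffa})$ for $\pi_F$, which carry degree $\deg f$ by the definition of $E(F)$, so these choices reproduce $F$ on the nose; the remaining verifications are the routine identities already checked in Proposition \ref{fibft}.
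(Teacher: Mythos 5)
Your proposal is correct and follows essentially the same route as the paper's proof: reuse the unfiltered construction from Proposition \ref{fibft}, then use the degree identity in Definition \ref{wcart} together with $\deg \tilde{g}_x = \deg g$ to verify that $F_\pi g$ is filtered, that $\deg \tau_{f,g}x \leq \deg f + \deg g - \deg (g\circ f)$, and that $\varphi(g,f) = f\circ\tilde{g}_x$ is degree-preserving. The only cosmetic difference is that you additionally spell out why $\varphi^{-1}$ preserves degrees, which the paper leaves implicit.
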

\begin{proof}
The construction of $F_\pi$ is same as Proposition \ref{fibft}, however, we should check that $F_\pi$ is indeed a normal oplax functor in the sense of Definition \ref{nofdffilt}. Namely, we should check that i) $F_\pi g$ is a morphism in $\Fsetcat$, and ii) $\deg \tau_{g, g'}x \leq \deg g + \deg g' - \deg g'\circ g$ for any $g : y \to y', g' : y' \to y'' \in Y$ and $x \in \pi^{-1}y$.
\begin{enumerate}
    \item [i)] It is enough to show that $F_\pi g$ preserves filtrations. Let $f : x \to x' \in \pi^{-1}y$. Since $(F_\pi g) f$ is induced by the universality of weakly $\pi$-cartesian morphism $\tilde{g}_x$, we have $\deg \tilde{g}_x + \deg (F_\pi g) f = \deg \tilde{g}_{x'}\circ f \leq \deg \tilde{g}_{x'} + \deg f$. Since we have $\deg g = \deg \tilde{g}_x = \deg \tilde{g}_{x'}$, we obtain $(\deg F_\pi g)f \leq \deg f$.
    \item[ii)] Since the morphism $\tau_{g, g'}x$ is induced from the universality of the weakly $\pi$-cartesian morphism $(\widetilde{g'\circ g})_x$, we have $\deg (\widetilde{g'\circ g})_x + \deg \tau_{g, g'}x = \deg \tilde{g'}_{x'}\circ \tilde{g}_x \leq \deg \tilde{g}_x + \deg \tilde{g'}_{x'}$, where we put $x' = t\tilde{g}_x $. Hence we obtain $\deg \tau_{g, g'}x \leq \deg \tilde{g}_x + \deg \tilde{g'}_{x'} - \deg (\widetilde{g'\circ g})_x = \deg g + \deg g' - \deg g'\circ g$.
\end{enumerate}
Next we check that the functor $\varphi : E(F_\pi) \too X$ defined in the proof of Proposition \ref{fibft} is also an isomorphism in $\Fsetcat$. We should check that $\deg \varphi (g, f) = \deg (g, f) = \deg g + \deg f$ for any $(g, f) \in \Mor E(F_\pi)$. Recall that we define $\varphi (g, f) = f \circ \tilde{g}_x$, where $g : y \to y' \in Y$ and $f : x \to x' \in \pi^{-1}y$. Since $\tilde{g}_x$ is weakly $\pi$-cartesian, we have $\deg f \circ \tilde{g}_x = \deg f + \deg \tilde{g}_x = \deg (g, f)$. Finally, we note that we have $F = F_{\pi_F}$ similarly to Proposition \ref{fibft}. This completes the proof.
\end{proof}
\begin{cor}\label{oneonecorrfilt}
There is a one to one correspondence between normal oplax functors $F$ and pre-opfibrations $\pi$ given by $\pi_F$ and $F_\pi$.
\end{cor}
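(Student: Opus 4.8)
The plan is to assemble the two preceding propositions into the asserted bijection. Propositions \ref{nofpof} and \ref{preoplax} already supply the two constructions $F \mapsto \pi_F$ and $\pi \mapsto F_\pi$; what remains is only to check that they are mutually inverse and to pin down in what precise sense. So I would not introduce any new computation, but rather trace the two round-trips through the data.

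First I would treat the composite $F \mapsto \pi_F \mapsto F_{\pi_F}$. Here nothing new is needed: the last sentence of Proposition \ref{preoplax} asserts $F = F_{\pi_F}$ for every normal oplax functor $F : X \too \Fsetcat$, so this composite is the identity on the nose. The only point worth spelling out is that the weakly $\pi_F$-cartesian lifts used to build $F_{\pi_F}$ may be taken to agree with the canonical lifts $\tilde f = (f, {\rm id}_{Ffa})$ constructed in the proof of Proposition \ref{nofpof}; with that choice the fibre categories $\pi_F^{-1}x$, the transition functors $F_{\pi_F}g$, and the comparison $2$-cells $\tau_{f,g}$ reproduce exactly the data of $F$, including the degree inequalities of Definition \ref{nofdffilt}. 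For the reverse composite $\pi \mapsto F_\pi \mapsto \pi_{F_\pi}$ I would invoke the isomorphism $\varphi : E(F_\pi) \too X$ of Proposition \ref{preoplax}, which lies in $\Fsetcat$ and satisfies $\pi \circ \varphi = \pi_{F_\pi}$. Thus $\pi$ and $\pi_{F_\pi}$ are isomorphic as pre-opfibrations over $X$, so this composite is the identity up to such an isomorphism. Combining the two round-trips yields the claimed correspondence.

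The chief obstacle, and the point a naive reading would miss, is the asymmetry of the two directions together with the role of choice. The passage $\pi \mapsto F_\pi$ rests on an application of the axiom of choice (the selection of weakly $\pi$-cartesian lifts $\tilde g_x$), so $F_\pi$ is determined by $\pi$ only up to a canonical isomorphism of normal oplax functors, and correspondingly $\pi_{F_\pi}$ recovers $\pi$ only up to isomorphism over the base, not strictly. Hence the honest formulation is a bijection between normal oplax functors $X \too \Fsetcat$ and isomorphism classes of pre-opfibrations over $X$, and I would state the corollary with exactly this understanding, as dictated by the strict equality $F = F_{\pi_F}$ on one side and the based isomorphism $X \cong E(F_\pi)$ of Proposition \ref{preoplax} on the other.
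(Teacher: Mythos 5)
Your proposal is correct and follows the same route as the paper, whose proof is simply the citation of Propositions \ref{nofpof} and \ref{preoplax}; you merely spell out the two round-trips ($F = F_{\pi_F}$ strictly, and $\pi \cong \pi_{F_\pi}$ via the based isomorphism $X \cong E(F_\pi)$) that the paper leaves implicit. Your added remark that the correspondence is honestly stated only up to isomorphism of pre-opfibrations over the base, owing to the choice of weakly cartesian lifts, is a fair and correct sharpening of what the paper asserts.
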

\begin{proof}
It is immediate from Proposition \ref{nofpof} and Proposition \ref{preoplax}.
\end{proof}
\subsection{Magnitude of pre-opfibrations}\label{mpo}
Now we show a remarkable property that the magnitude of a pre-opfibration splits as the product of those of the ``fiber'' and the ``base'' if they have. The following two propositions are inspired by Lemma 1.14 of \cite{L2} and Theorem 2.3.11 of \cite{L3}.
\begin{prop}\label{mpop}
Let $F : X \too \Fsetcat$ be a normal oplax functor. Suppose that $X$, $Fx$ for any $x \in \Ob X$ and $E(F)$ are all $\CFset$-categories of finite type. If $X$ and $Fx$ for any $x \in \Ob X$ have weightings $k^\bullet$, then $E(F)$ has a weighting $k^{(x, a)} = k^xk^a$.
\end{prop}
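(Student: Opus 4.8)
The plan is to verify directly that the proposed function $k^{(x,a)} = k^x k^a$ satisfies the weighting equation $\zeta_{E(F)} k = 1$ in $A^{E(F)}$, by first factoring the zeta function of the total category and then collapsing the resulting double sum using the weighting equation of the fibers and that of the base in turn. Note first that each $k^x k^a$ lies in $A$, so $k^{(x,a)}$ is a genuine element of $A^{E(F)}$; and since $E(F)$ is assumed of finite type, Lemma \ref{bimodule} guarantees that the module action $\zeta_{E(F)} k$ is a well-defined element of $A^{E(F)}$. The content of the proof is then to evaluate this action.

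The first step I would carry out is to record the factorization of the zeta function. By the definition of $E(F)$ and of the size function $\#$, the hom filtered set $E(F)((x,a),(x',b))$ is the disjoint union over $f : x \to x'$ of $\ast(\deg f) \times Fx'(Ffa, b)$, and since $\#$ is multiplicative on products and additive on disjoint unions, this gives
\begin{align*}
\zeta_{E(F)}((x,a),(x',b)) = \sum_{f : x \to x'} q^{\deg f}\, \zeta_{Fx'}(Ffa, b),
\end{align*}
where the sum over $f$ converges in $A$ because $X$ is of finite type.

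Next I would substitute this expression together with $k^{(x',b)} = k^{x'} k^b$ into the weighting sum and reorganize:
\begin{align*}
(\zeta_{E(F)} k)((x,a)) &= \sum_{x' \in \Ob X}\ \sum_{b \in \Ob Fx'} \left( \sum_{f : x \to x'} q^{\deg f}\, \zeta_{Fx'}(Ffa, b)\right) k^{x'} k^b \\
&= \sum_{x' \in \Ob X} k^{x'} \sum_{f : x \to x'} q^{\deg f} \left( \sum_{b \in \Ob Fx'} \zeta_{Fx'}(Ffa, b)\, k^b \right).
\end{align*}
The innermost sum is precisely $(\zeta_{Fx'} k)(Ffa)$, which equals $1$ because $k^\bullet$ is a weighting of the fiber $Fx'$. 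Hence the expression collapses to $\sum_{x'} k^{x'} \sum_{f:x\to x'} q^{\deg f} = \sum_{x'} \zeta_X(x,x')\, k^{x'} = (\zeta_X k)(x) = 1$, using the weighting equation of the base $X$. As this holds for every $(x,a)$, the function $k^{(x,a)} = k^x k^a$ is a weighting of $E(F)$.

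The only genuinely delicate point, and the step I expect to be the main obstacle, is the justification of the interchanges of summation, since we are manipulating infinite series in the Novikov ring and the identities above are a priori only formal. To make this rigorous I would argue, as in Remark \ref{direct} and the proof of Proposition \ref{invertiblecase}, by fixing $\ell \in \R_{\geq 0}$ and passing to $A/m_\ell$. Because $E(F)$ is of finite type, $\pi_\ell \zeta_{E(F)}((x,a),-)$ has finite support, and any $(x',b)$ for which $\zeta_{E(F)}((x,a),(x',b)) \in m_\ell$ contributes a term lying in $m_\ell$ (as every weighting value lies in the valuation ring $A$ and $m_\ell$ is an ideal); thus modulo $m_\ell$ the entire computation reduces to a finite sum, in which the reorderings above are elementary. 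Since $A \cong \varprojlim A/m_\ell$, the identity $(\zeta_{E(F)} k)((x,a)) = 1$ then holds in $A$, completing the argument.
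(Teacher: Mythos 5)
Your proof is correct and follows essentially the same route as the paper: expand $\#E(F)$ as a sum over pairs $(f,\xi)$, collapse the inner sum via the fiber weighting equation, then collapse the outer sum via the base weighting equation. The paper's proof is exactly this computation written as a single chain of equalities (with the summation interchanges left implicit); your extra care in justifying the rearrangements by reducing modulo $m_\ell$ is a welcome addition but not a different argument.
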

\begin{proof}
It follows from the following calculation.
\begin{align*}
    & \sum_{(x, a)\in \Ob E(F)} \#E(F)((x', b), (x, a))k^xk^a \\
    &= \sum_{x \in \Ob X}\sum_{a \in Fx} \sum_{f \in X(x', x)}\sum_{\xi \in Fx(Ffb, a)}q^{\deg f + \deg \xi}k^xk^a \\
    &= \sum_{x \in \Ob X} \sum_{f \in X(x', x)}q^{\deg f}k^x \\
     &= 1.
\end{align*}
\end{proof}
\begin{prop}\label{mpop}
Let $F : X \too \Fsetcat$ be a normal oplax functor with $Fx \cong Fx'$ for any $x, x' \in \Ob X$. Suppose that $X$, $Fx$, $E(F)$ are all finite $\CFset$-categories. If $X$, $Fx$ and $E(F)$ all have the magnitude, then $\Mag E(F) = \Mag X \cdot \Mag Fx$.
\end{prop}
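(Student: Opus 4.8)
The plan is to deduce this directly from the preceding proposition on the existence of the product weighting, together with the well-definedness clause of Definition \ref{defmag}, which allows the magnitude of a finite category to be evaluated from any single weighting. First I would unpack the hypotheses: since $X$ and each fiber $Fx$ admit a magnitude, each possesses a weighting in the sense of Definition \ref{defmag}; write $k^\bullet$ for a weighting of $X$ and, for each $x \in \Ob X$, write $k^\bullet$ for a weighting of $Fx$. By the definition of magnitude via a weighting we then have $\Mag X = \sum_{x \in \Ob X} k^x$ and $\Mag Fx = \sum_{a \in \Ob Fx} k^a$. Because a finite $\CFset$-category is automatically of finite type, the hypotheses of the preceding proposition are met, and it furnishes a weighting on $E(F)$ given explicitly by $k^{(x,a)} = k^x k^a$.

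Next, since $E(F)$ is assumed to admit a magnitude, it has both a weighting and a coweighting, so by the well-definedness part of Definition \ref{defmag} its magnitude may be computed from the particular weighting just produced. I would then carry out the computation
\[
\Mag E(F) = \sum_{(x,a)\in \Ob E(F)} k^{(x,a)} = \sum_{x \in \Ob X} k^x \left( \sum_{a \in \Ob Fx} k^a \right) = \sum_{x \in \Ob X} k^x \cdot \Mag Fx,
\]
using that $\Ob E(F) = \{(x,a) \mid a \in \Ob Fx,\ x \in \Ob X\}$ and factoring the sum over the Grothendieck construction.

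The single place where the hypothesis $Fx \cong Fx'$ is needed—and the only genuinely delicate point—is to make the factor $\Mag Fx$ independent of $x$ so that it can be pulled out of the sum over $\Ob X$. Isomorphic $\CFset$-categories have weightings transported along the isomorphism and hence equal magnitude, so $\Mag Fx$ is a single element of $A$; this gives $\Mag E(F) = \Mag Fx \cdot \sum_{x \in \Ob X} k^x = \Mag X \cdot \Mag Fx$, as claimed. I expect no serious obstacle beyond this factoring step: the construction of the coweighting required for $\Mag E(F)$ to be defined is supplied by hypothesis (alternatively it can be built dually to the preceding proposition), and the verification that $k^{(x,a)} = k^x k^a$ is a weighting is already discharged there, leaving only the bookkeeping of the double sum.
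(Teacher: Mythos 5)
Your proposal is correct and is essentially the paper's argument: the paper's proof is just the single line ``It follows from Proposition \ref{mpop}'' (the product-weighting proposition), and you have supplied exactly the intended details — invoke the product weighting $k^{(x,a)} = k^x k^a$, use the well-definedness clause of Definition \ref{defmag} to compute $\Mag E(F)$ from that weighting, and use $Fx \cong Fx'$ to pull the common factor $\Mag Fx$ out of the sum.
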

\begin{proof}
It follows from Proposition \ref{mpop}.
\end{proof}
In the following, we discuss when the assumptions in the above propositions are satisfied. We say that a subset $U \subset \R_{\geq 0}$ is {\it left finite} if it is a support of some left finite function $f : \R_{\geq 0} \too \Q$.
\begin{lem}\label{cfcf}
Let $F : X \too \Fsetcat$ be a normal oplax functor. Suppose that $X$ and any $Fx$ for $x \in \Ob X$ are $\CFset$-categories. Then $E(F)$ is also a $\CFset$-category.
\end{lem}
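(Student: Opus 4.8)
The plan is to verify directly that every hom filtered set of $E(F)$ is collectable, i.e.\ that for any pair of objects $(x, a), (x', b) \in \Ob E(F)$ and any $\ell \in \R_{\geq 0}$ there are only finitely many morphisms $(f, \xi) : (x, a) \to (x', b)$ with $\deg(f, \xi) \leq \ell$. Recall from the construction that $E(F)((x, a), (x', b)) = \{(f, \xi) \mid f : x \to x' \in \Mor X,\ \xi : Ffa \to b \in \Mor Fx'\}$ and that $\deg(f, \xi) = \deg f + \deg \xi$. Since $E(F)$ is already known to be a $\Fset$-category, this collectability is the only thing left to check.

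The key observation is that degrees in a filtered set are non-negative, so the single inequality $\deg f + \deg \xi \leq \ell$ forces both $\deg f \leq \ell$ and $\deg \xi \leq \ell$ separately. First I would use that $X$ is a $\CFset$-category: the hom filtered set $X(x, x')$ is collectable, so the set $S_\ell := \{f : x \to x' \mid \deg f \leq \ell\}$ is finite. Then, for each individual $f \in S_\ell$, the functor $Ff : Fx \to Fx'$ sends $a$ to a single determined object $Ffa \in \Ob Fx'$, and since $Fx'$ is a $\CFset$-category its hom filtered set $Fx'(Ffa, b)$ is collectable; hence $\{\xi : Ffa \to b \mid \deg \xi \leq \ell\}$ is finite.

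Combining these, the set of morphisms of degree $\leq \ell$ is contained in the finite union $\bigcup_{f \in S_\ell} \{f\} \times \{\xi : Ffa \to b \mid \deg \xi \leq \ell\}$, a finite union of finite sets and therefore finite. This yields collectability of every hom filtered set of $E(F)$, so $E(F)$ is a $\CFset$-category. The argument is essentially bookkeeping; there is no genuine obstacle beyond noting the additive and non-negative nature of the degree function, which is precisely what lets the two collectability hypotheses be applied independently. The only subtlety worth flagging is that the target object $Ffa$ varies with $f$, but since $S_\ell$ is finite this causes no difficulty.
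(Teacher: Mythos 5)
Your proof is correct and follows essentially the same route as the paper: both exploit that $\deg(f,\xi)=\deg f+\deg\xi$ with non-negative summands, so collectability of $X(x,x')$ bounds the possible $f$'s and collectability of each $Fx'(Ffa,b)$ bounds the possible $\xi$'s. The only cosmetic difference is that the paper partitions the hom-set by the exact degree of $f$ and invokes left-finiteness of the set of occurring degrees, whereas you work directly with the finite sublevel set $S_\ell$, which is slightly more direct but mathematically equivalent.
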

\begin{proof}
Let $X(x, x')(\ell) := \{f \in X(x, x') \mid \deg f = \ell\}$ for any $x, x' \in \Ob X$ and $\ell \in \R_{\geq 0}$. Let $U := \{\ell \in \R_{\geq 0} \mid X(x, x')(\ell) \neq \emptyset \}$, which is left finite since $X(x, x')$ is collectable. Then we have 
\begin{align*}
E(F)((x, a), (x', b)) &= \{(f, \xi) \mid f \in  X(x, x'), \xi \in Fx'(Ffa, b)\} \\
&= \bigcup_{\ell \in U}\{(f, \xi) \mid f \in X(x, x')(\ell), \xi \in Fx'(Ffa, b)\}.
\end{align*}
Since each $X(x, x')(\ell)$ is a finite set and each $Fx'(Ffa, b)$ is collectable, the filtered set $\{(f, \xi) \mid f \in X(x, x')(\ell), \xi \in Fx'(Ffa, b)\}$ is collectable, whose elements have degree $\geq \ell$. Hence the left finiteness of $\lambda$ implies that the filtered set $\bigcup_{\ell \in S}\{(f, \xi) \mid f \in X(x, x')(\ell), \xi \in Fx'(Ffa, b)\}$ is collectable. This completes the proof.
\end{proof}
Now we have the following two obvious corollaries.
\begin{cor}
Let $F : X \too \Fsetcat$ be a normal oplax functor. Suppose that $X$ and any $Fx$ for $x \in \Ob X$ are finite $\CFset$-categories. Then $E(F)$ is also a finite $\CFset$-category.
\end{cor}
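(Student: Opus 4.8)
The plan is to reduce the statement to Lemma~\ref{cfcf} together with a direct inspection of the object set. Since a finite $\CFset$-category is in particular a $\CFset$-category, the hypotheses of Lemma~\ref{cfcf} are satisfied, and that lemma already guarantees that $E(F)$ is a $\CFset$-category; in other words, the collectability of every hom-object $E(F)((x,a),(x',b))$ is handled for free. First I would therefore invoke Lemma~\ref{cfcf} to dispose of the enrichment data entirely, so that the only remaining obligation is to verify the finiteness condition, namely that $E(F)$ has finitely many objects.

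For this I would read the object set off directly from the construction of $E(F)$. By definition $\Ob E(F) = \{(x,a) \mid x \in \Ob X,\ a \in \Ob Fx\}$, which is the union $\bigcup_{x \in \Ob X} \Ob Fx$ indexed by $\Ob X$. The hypothesis that $X$ is finite means $\Ob X$ is a finite set, and the hypothesis that each $Fx$ is finite means each $\Ob Fx$ is a finite set; a finite union of finite sets is finite, so $\Ob E(F)$ is finite. Combining this with the $\CFset$-enrichment supplied by Lemma~\ref{cfcf} shows that $E(F)$ is a finite $\CFset$-category. I do not expect any genuine obstacle: the substantive work on collectability was already carried out in Lemma~\ref{cfcf}, and the finiteness of the object set is immediate from the product-like description of $\Ob E(F)$, so the corollary is essentially a bookkeeping combination of the two finiteness assumptions.
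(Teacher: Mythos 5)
Your proposal is correct and matches the paper's intent: the paper states this corollary as obvious immediately after Lemma~\ref{cfcf}, and the intended argument is exactly yours — Lemma~\ref{cfcf} supplies the collectability of all hom-objects, and since ``finite'' for a $\CFset$-category means finitely many objects (Definition~\ref{defmag}), the finiteness of $\Ob E(F) = \{(x,a) \mid x \in \Ob X,\ a \in \Ob Fx\}$ follows from the finite union of finite sets.
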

\begin{cor}\label{tametame}
Let $F : X \too \Fsetcat$ be a normal oplax functor. Suppose that $X$ and any $Fx$ for $x \in \Ob X$ are finite tame categories. Then $E(F)$ is also a finite tame category.
\end{cor}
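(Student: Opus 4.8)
The plan is to verify the defining requirements of a finite tame category for $E(F)$ in turn, the finiteness being immediate and the tameness being the real content. First I would record finiteness: since $\Ob E(F) = \{(x,a) \mid x \in \Ob X,\ a \in \Ob Fx\}$ and both $\Ob X$ and every $\Ob Fx$ are finite, $E(F)$ has finitely many objects; together with Lemma \ref{cfcf} this makes it a finite $\CFset$-category, and it is then automatically of finite type. It remains to establish tameness in the sense of Definition \ref{tamedf}(4). One might hope to invoke Lemma \ref{uniform}, but this route is not available: a finite tame category can carry non-identity morphisms of degree $0$ (for instance a two-object poset with its single order relation assigned degree $0$), so there need be no uniform positive lower bound on the degrees of non-identity morphisms of $X$, of the fibers, or of $E(F)$. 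Hence a direct path-counting estimate is required.

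The core of the argument is to fix $\ell \in \R_{\geq 0}$ and analyze an arbitrary non-degenerate path $(f_1, \xi_1) \sim \dots \sim (f_n, \xi_n)$ in $E(F)$, whose length is $\sum_i(\deg f_i + \deg \xi_i)$ because $\deg(f_i, \xi_i) = \deg f_i + \deg \xi_i$. Applying the projection $\pi_F$ and then deleting those indices where $f_i = {\rm id}$ produces a composable sequence of the non-identity $f_i$'s, that is, a non-degenerate $p$-path in $X$ where $p$ is the number of indices with $f_i \neq {\rm id}$; its length equals $\sum_i \deg f_i$ and is therefore bounded by the length of the original path. Dually, each maximal run of consecutive indices with $f_i = {\rm id}$ consists of morphisms $({\rm id}_x, \xi_i)$ lying in a single fiber $Fx$ — the base object $x$ is forced to stay constant along such a run, and since $F{\rm id}_x = {\rm id}$ and $\tau_{{\rm id}, {\rm id}} = {\rm id}$ their composition is computed inside $Fx$ — so each run is a non-degenerate path in some $Fx$ whose length is again at most that of the whole path, with the $\xi_i$ genuinely non-identity in $Fx$.

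Finally I would combine these with tameness of the base and the fibers. Using tameness of $X$, choose $N^X_\ell$ so that every non-degenerate $N^X_\ell$-path in $X$ has length $> \ell$, and, using tameness of the finitely many fibers, set $N^F_\ell := \max_{x} N^{Fx}_\ell$ with the analogous property in every $Fx$. If our $E(F)$-path has length $\leq \ell$, then necessarily $p < N^X_\ell$ (else the projected path already has length $> \ell$) and every vertical run has fewer than $N^F_\ell$ morphisms; since the $p$ horizontal morphisms split the sequence into at most $p+1 \leq N^X_\ell$ vertical runs, the total length satisfies $n < N^X_\ell + N^X_\ell \cdot N^F_\ell = N^X_\ell(1 + N^F_\ell)$. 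Thus setting $N^{E(F)}_\ell := N^X_\ell(1 + N^F_\ell)$ forces every non-degenerate $N^{E(F)}_\ell$-path to have length $> \ell$, which is exactly tameness. The step needing the most care — the main obstacle — is establishing that a path in $E(F)$ decomposes compatibly with the degree into a single $X$-path and a bounded number of fiber-paths; this rests on the additivity $\deg(f,\xi) = \deg f + \deg \xi$ and on the normalization $\tau_{{\rm id},{\rm id}} = {\rm id}$, and the twisting of the composition law by the $\tau_{f,g}$ turns out to be harmless precisely because it never enters these length counts.
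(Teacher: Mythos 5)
Your proof is correct; note that the paper itself offers no argument here, simply labelling the statement an ``obvious corollary'' of Lemma \ref{cfcf}, so there is nothing to compare against. Your decomposition of a non-degenerate path in $E(F)$ into its projected base path and at most $p+1$ vertical runs, with the bound $N^{E(F)}_\ell = N^X_\ell(1+N^F_\ell)$, is the natural justification and correctly supplies the details the paper omits (including the accurate observation that Lemma \ref{uniform} is not applicable in general).
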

\begin{lem}\label{typetype}
Let $F : X \too \Fsetcat$ be a normal oplax functor. Suppose that $X$ and any $Fx$ for $x \in \Ob X$ are $\CFset$-categories of finite type. If any fiber of $Ff : \Ob Fx \too \Ob Fx'$ is a finite set for any $f : x \to x' \in X$, then $E(F)$ is also a $\CFset$-category of finite type.
\end{lem}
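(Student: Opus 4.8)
The plan is to invoke Lemma~\ref{cfcf}, which already guarantees that $E(F)$ is a $\CFset$-category, and then to verify only the finite type condition. For this I would use the characterization of finite type in Lemma~\ref{finitetype}(1): it suffices to show that for every object $(x, a) \in \Ob E(F)$ and every $\ell \in \R_{\geq 0}$, there are finitely many morphisms $(f, \xi) \in \Mor E(F)$ with $\deg(f, \xi) \leq \ell$ whose source or target is $(x, a)$. Since $\deg(f, \xi) = \deg f + \deg \xi$, both summands are bounded by $\ell$, and I would treat the source and the target cases separately.

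For the source case, a morphism with $s(f, \xi) = (x, a)$ is a pair consisting of $f : x \to x'$ in $X$ and $\xi : Ffa \to b$ in $Fx'$ with $\deg f + \deg \xi \leq \ell$. Because $X$ is of finite type, there are only finitely many such $f$ with $sf = x$ and $\deg f \leq \ell$ by Lemma~\ref{finitetype}(1). For each fixed $f$, the source $Ffa$ of $\xi$ is determined, and since $Fx'$ is of finite type there are only finitely many $\xi$ with this prescribed source and $\deg \xi \leq \ell$. Hence finitely many pairs $(f, \xi)$ arise, and this case uses nothing beyond finite type of the base and of the fibers.

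The target case is the crux, and this is where the finite fiber hypothesis becomes indispensable. A morphism with $t(f, \xi) = (x, a)$ is a pair $f : x' \to x$ in $X$ and $\xi : Ffb \to a$ in $Fx$ with $b \in \Ob Fx'$ and $\deg f + \deg \xi \leq \ell$. Finite type of $X$ again bounds the number of $f$ with $tf = x$ and $\deg f \leq \ell$. Fixing such an $f : x' \to x$, finite type of $Fx$ yields finitely many morphisms $\xi$ in $Fx$ with $t\xi = a$ and $\deg \xi \leq \ell$, and these have only finitely many distinct sources $s\xi$. The remaining freedom is the choice of $b \in \Ob Fx'$ subject to $Ffb = s\xi$; that is, $b$ ranges over the fiber $(Ff)^{-1}(s\xi)$ of $Ff : \Ob Fx' \too \Ob Fx$, which is finite by hypothesis. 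Summing the finitely many fiber sizes over the finitely many admissible $\xi$ gives finitely many pairs $(b, \xi)$, hence finitely many morphisms $(f, \xi)$ with target $(x, a)$.

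The main obstacle is precisely this target case: without the finite fiber assumption, a single $f$ for which $Ff$ collapses infinitely many objects of $Fx'$ onto one source $s\xi$ would produce infinitely many morphisms of bounded degree into $(x, a)$, so that $E(F)$ could fail to be of finite type even though $X$ and all $Fx$ are. The rest of the argument is a routine bookkeeping of the decomposition of a morphism of $E(F)$ into its base component $f$ and fiber component $\xi$, after which the finite type of $E(F)$ follows from Lemma~\ref{finitetype}(1).
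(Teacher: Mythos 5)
Your proof is correct and follows essentially the same route as the paper's: both decompose a morphism of $E(F)$ into its base component $f$ and fiber component $\xi$, bound each using finite type of $X$ and of the fibers, and invoke the finite-fiber hypothesis exactly where you do, namely for morphisms \emph{into} a fixed object $(x,a)$. The only difference is that you verify condition (1) of Lemma~\ref{finitetype} by counting morphisms, while the paper verifies the equivalent condition (2) by showing the relevant unions of hom filtered sets are collectable, which obliges it to add a separate left-finiteness argument for the union over $x'$ that your count-based phrasing absorbs automatically.
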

\begin{proof}
We have
\begin{align*}
\bigcup_{(x', b)\in \Ob E(F)}E(F)((x, a), (x', b)) &= \{(f, \xi) \mid f \in X(x, x'), \xi \in Fx'(Ffa, b)\} \\
&= \bigcup_{x' \in \Ob X}\bigcup_{b \in \Ob Fx'}\bigcup_{f \in X(x, x')}\ast(\deg f)\times Fx'(Ffa, b) \\
&= \bigcup_{x' \in \Ob X}\bigcup_{f \in X(x, x')}\left(\ast(\deg f)\times \bigcup_{b \in \Ob Fx'}Fx'(Ffa, b)\right).
\end{align*}
Here, since each $Fx'$ is of finite type,  $\bigcup_{f \in X(x, x')}\left(\ast(\deg f)\times \bigcup_{b \in \Ob Fx'}Fx'(Ffa, b)\right) =: C(x, x')$ is a collectable filtered set by the same argument of Proposition \ref{cfcf} and Lemma \ref{finitetype}. Let us denote the lowest degree of elements of $X(x, x')$ by $\ell(x, x')$. We define $\Ob X(x, \ell) := \{x' \in \Ob X \mid \ell(x, x') = \ell\}$. Then a subset $U := \{\ell \in \R_{\geq 0} \mid \Ob X(x, \ell) \neq \emptyset \} \subset \R_{\geq 0}$ is left finite since $X$ is of finite type. Furthermore, the filtered set $\bigcup_{x' \in \Ob X(x, \ell)} C(x, x')$ is collectable with lowest degree $\geq \ell$ for each $\ell \in \R_{\geq 0}$. Hence $\bigcup_{x' \in \Ob X} C(x, x') = \bigcup_{\ell \in U}\bigcup_{x' \in \Ob X(x, \ell)} C(x, x')$ is collectable. On the other hand, we have
\begin{align*}
\bigcup_{(x, a)\in \Ob E(F)}E(F)((x, a), (x', b)) &= \{(f, \xi) \mid f \in X(x, x'), \xi \in Fx'(Ffa, b)\} \\
&= \bigcup_{x \in \Ob X}\bigcup_{a \in \Ob Fx}\bigcup_{f \in X(x, x')}\ast(\deg f)\times Fx'(Ffa, b) \\
&= \bigcup_{x \in \Ob X}\bigcup_{f \in X(x, x')}\left(\ast(\deg f)\times \bigcup_{a \in \Ob Fx'}Fx'(Ffa, b)\right),
\end{align*}
where $\bigcup_{f \in X(x, x')}\left(\ast(\deg f)\times \bigcup_{a \in \Ob Fx'}Fx'(Ffa, b)\right)$ turns out to be collectable by the assumption for fibers. Hence $\bigcup_{(x, a)\in \Ob E(F)}E(F)((x, a), (x', b))$ is collectable by the similar argument to the above. This completes the proof.
\end{proof}
\begin{eg}\label{gammas}
Let $(\Gamma, S)$ be a finitely generated group considered as a $\CFset$-category as in \ref{growth}. We denote the underlying set of the group $\Gamma$ by $|\Gamma|$. We also denote the automorphism of $|\Gamma|$ by the left multiplication of $g \in \Gamma$ by $a_g$. We construct a normal oplax functor $F : (\Gamma, S) \too \Fsetcat$ by $F\ast = |\Gamma|$ and $Fg = a_g$. Then $F$ satisfies the condition of Lemma \ref{typetype}, hence $E(F)$ is a $\CFset$-category of finite type. Actually, $E(F)$ is isomorphic to the Cayley graph ${\rm Cay}(\Gamma, S)$, and the coincidence of the weightings of $(\Gamma, S)$ and ${\rm Cay}(\Gamma, S)$ observed in \ref{growth} can be explained by Proposition \ref{mpop}.
\end{eg}
\subsection{Restriction to metric spaces}\label{rms}
In the following, we consider the restrictions of pre-opfibrations and oplax functors to $\Met$, which will turn out to coincide with Leinster's {\it metric fibrations} (\cite{L3}). We give some examples of metric fibrations by utilizing Corollary \ref{oneonecorrfilt}. 

For a normal oplax functor $F : X \too \Fsetcat$, the $\Fset$-category $E(F)$ is not necessarily a metric space in general, even if $X$ and $Fx$ are metric spaces for any $x \in \Ob X$. To consider a restriction  of normal oplax funtors to $\Met$, we need the following notion.
\begin{df}\label{meac}
A normal oplax functor $F : X \too \Fsetcat$ is {\it a metric action} if it satisfies the following:
\begin{enumerate}
    \item $Fx$ is a metric space for any $x \in \Ob X$.
    \item Let $X_m$ be the full subcategory of $X$ consisting of objects $x$ with $Fx \neq \emptyset$. Then $X_m$ is a metric space.
    \item Let $f \in X_m(x, x')$ and $g \in X_m(x', x)$ be the unique morphisms between $x$ and $x'$. Then we have $FgFf = {\rm id}_{Fx}$ and $FfFg = {\rm id}_{Fx'}$.
\end{enumerate}
\end{df}
\begin{prop}
Let $F : X \too \Fsetcat$ be a normal oplax functor. Then $E(F)$ is a metric space if and only if $F$ is a metric action.
\end{prop}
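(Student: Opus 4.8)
The plan is to prove the two implications separately, reading ``$E(F)$ is a metric space'' as the conjunction of three things: every hom-set of $E(F)$ is a singleton of finite degree (so that $E(F)$ is a $[0,\infty]$-category, i.e.\ an object of $\GMet$), the induced distance is symmetric, and it is non-degenerate. Throughout I use that an object $(x,a)$ of $E(F)$ forces $Fx\neq\emptyset$, so every object lies over some $x\in X_m$.

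For the ``if'' direction, assume $F$ is a metric action. For $x,x'\in X_m$ let $f$ be the unique morphism $x\to x'$ in $X_m$; then $E(F)((x,a),(x',b))=\{(f,\xi)\mid \xi\in Fx'(Ffa,b)\}$, which is a singleton because $Fx'$ is a metric space. Hence $E(F)$ is a $[0,\infty]$-category with $d_{E(F)}((x,a),(x',b))=d_{X_m}(x,x')+d_{Fx'}(Ffa,b)$. The main computation is symmetry: writing $g$ for the unique morphism $x'\to x$, condition (3) of Definition \ref{meac} gives $FgFf=\mathrm{id}_{Fx}$ and $FfFg=\mathrm{id}_{Fx'}$, so $Ff$ is a bijective $1$-Lipschitz map with $1$-Lipschitz inverse $Fg$, i.e.\ an isometry. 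Then $d_{Fx'}(Ffa,b)=d_{Fx}(a,Fgb)=d_{Fx}(Fgb,a)$ by the isometry and the symmetry of $Fx$, while $d_{X_m}(x,x')=d_{X_m}(x',x)$; adding these gives symmetry of $d_{E(F)}$. Non-degeneracy is then immediate: if both distances vanish, the $X_m$-summands force $x=x'$, hence $f=g=\mathrm{id}_x$ and $Ff=\mathrm{id}_{Fx}$, whereupon the $Fx$-summands together with non-degeneracy of $Fx$ give $a=b$.

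For the ``only if'' direction, assume $E(F)$ is a metric space and recover the three conditions. Thinness of each fibre is easy: the arrows $(\mathrm{id}_x,\xi)$ with $\xi\in Fx(a,b)$ inject into the singleton $E(F)((x,a),(x,b))$, so $|Fx(a,b)|\le 1$ and each $Fx$ is at least an object of $\GMet$. To promote $Fx$ to a metric space and to extract $X_m$ and condition (3), I would exploit the reversibility forced by $E(F)$: given the unique arrow $(f_0,\xi_0)\colon(x,a)\to(x,b)$ and its reverse $(g_0,\eta_0)\colon(x,b)\to(x,a)$, both round-trip composites must coincide with the unique degree-$0$ endomorphisms $(\mathrm{id}_x,\mathrm{id}_a)$ and $(\mathrm{id}_x,\mathrm{id}_b)$; reading off base and fibre components shows $g_0\circ f_0=\mathrm{id}_x$ and $f_0\circ g_0=\mathrm{id}_x$ in $X$, with the fibre parts witnessing mutually inverse transition maps. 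Running this between distinct objects over $X_m$ and projecting along $\pi_F$ should yield that $X_m$ is thin, symmetric and non-degenerate, that each $Ff$ is an isometry, and finally that the fibre inclusions $Fx\hookrightarrow E(F)$ are isometric, so that each $Fx$ inherits finite distances, symmetry and non-degeneracy.

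The hard part is this last direction, and within it the control of the base. The oplax structure permits, a priori, several morphisms $x\to x'$ in $X$ and non-invertible transition maps, and one must show that singleton-ness, symmetry and non-degeneracy of $E(F)$ collapse these to a thin base $X_m$ with strictly invertible transition functors. The crux is to upgrade the round-trip relations $g_0\circ f_0=\mathrm{id}$, $f_0\circ g_0=\mathrm{id}$ — which by themselves only make the transition maps inverse \emph{up to} the coherence morphisms $\tau$, since $\deg\tau_{f,g}a\le\deg f+\deg g-\deg(g\circ f)$ — into the genuine equalities $FgFf=\mathrm{id}$ and $FfFg=\mathrm{id}$ of condition (3), and to rule out that a base morphism provides a shortcut between two points of a single fibre (which would make the embedding $Fx\hookrightarrow E(F)$ non-isometric and $Fx$ fail to be a metric space). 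This interplay between the degree bookkeeping carried by $\tau$ and the metric axioms on $E(F)$ is the step I expect to demand the most care, and it is where symmetry and non-degeneracy of $E(F)$ must be invoked most delicately.
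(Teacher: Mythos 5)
Your ``if'' direction is complete and correct, and is essentially the computation the paper dismisses as straightforward: with $F$ a metric action the hom-sets of $E(F)$ are singletons, $d_{E(F)}((x,a),(x',b))=d_{X_m}(x,x')+d_{Fx'}(Ffa,b)$, and condition (3) makes each $Ff$ an isometry, whence symmetry and non-degeneracy follow. No complaints there.

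The ``only if'' direction, however, is a roadmap rather than a proof, and you say so yourself: the step you flag as ``the crux'' --- upgrading the round-trip relations to the genuine equalities $FgFf=\mathrm{id}_{Fx}$, $FfFg=\mathrm{id}_{Fx'}$, showing $\deg f=\deg g>0$, and ruling out base endomorphisms that shortcut a fibre --- is exactly the content of the claim, and it is left unexecuted. Your proposed mechanism (composing an arrow with its reverse and reading off components) does not suffice on its own: from $(g,\eta)\circ(f,\xi)=(\mathrm{id},\mathrm{id})$ one only extracts $g\circ f=\mathrm{id}_x$ and a relation involving $\tau_{f,g}$, which, as you note, leaves the transition maps inverse only up to $\tau$. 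The paper closes this gap by a different, quantitative argument that you do not reach: after observing that each $Fx$ embeds into $E(F)$ (hence is a metric space) and that each $X(x,x')$ with $Fx,Fx'\neq\emptyset$ is a singleton $\{f\}$ (so in particular $X(x,x)=\{\mathrm{id}_x\}$), one computes the two distances $d((x,a),(x',Ffa))=\deg f$ and $d((x',Ffa),(x,a))=\deg g+\deg\tau_{f,g}a$, and likewise $d((x',Ffa),(x,FgFfa))=\deg g$ and $d((x,FgFfa),(x',Ffa))=\deg f+\deg\tau_{g,f}Ffa$. Symmetry of $E(F)$ applied to both pairs forces $\deg f=\deg g$ and $\deg\tau_{f,g}a=0$, so non-degeneracy of $Fx$ gives $FgFfa=a$ for every $a$, i.e.\ condition (3); and non-degeneracy of $E(F)$ gives $\deg f>0$ when $x\neq x'$, so $X_m$ is a metric space. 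You would need to supply this (or an equivalent) argument before the converse can be considered proved.
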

\begin{proof}
It is straightforward to check that $E(F)$ is a metric space when $F$ is a metric action. We show the converse. Suppose that $E(F)$ is a metric space. Since $Fx$ can be embedded to $E(F)$ for any $x \in \Ob X$ by $\Ob Fx \ni a \mapsto (x, a)$ and $\Mor Fx \ni \xi \mapsto ({\rm id}_x, \xi)$, it should be a metric space. Further, there should be exactly one morphism between any two objects $x, x' \in \Ob X$ with $Fx, Fx' \neq \emptyset$, because $E(F)$ has only one morphism between two objects. In particular, we have $X(x, x) = \{{\rm id}_x\}$ for any $x \in \Ob X$. Let $x, x' \in \Ob X$ with $Fx, Fx' \neq \emptyset$, $x\neq x'$, $a \in Fx$ and $X(x, x') = \{f\}, X(x', x) = \{g\}$. Denoting the distance function of $E(F)$ by $d$, we have 
\begin{align*}
    d((x, a), (x', Ffa)) &= \deg f,\\
     d((x', Ffa), (x, a)) &= \deg g + \deg \tau_{f, g}a.
\end{align*}
Hence we obtain that $\deg f \geq \deg g$. On the other hand, we have 
\begin{align*}
    d((x', Ffa), (x, FgFfa)) &= \deg g,\\
     d((x, FgFfa), (x', Ffa)) &= \deg f + \deg \tau_{g, f}Ffa.
\end{align*}
Hence we obtain that $\deg g \geq \deg f$, and thus $\deg f = \deg g >0, \deg \tau_{f, g}a = 0$. This implies that the full subcategory of $X$ consisting of objects $x$ with $Fx \neq \emptyset$ is a metric space, and $FgFf = {\rm id}_{Fx}$ for any such $x, x' \in \Ob X$ and $f \in X(x, x'), g\in X(x', x)$. This completes the proof.
\end{proof}

\begin{df}\label{mef}
Let $X, Y$ be metric spaces. A $1$-Lipschitz map $\pi : X \too Y$ is {\it a metric fibration} if it satisfies the following: For any $x \in X$ and $y \in Y$, there uniquely exists $z \in \pi^{-1}y$ such that 
\begin{enumerate}
    \item $d(x, z) = d(\pi x, y)$,
    \item $d(x, w) = d(x, z) + d(w, z)$ for any $w \in \pi^{-1}y$.
\end{enumerate}
\end{df}
The following shows that the restriction of pre-opfibrations to $\Met$ is exactly the metric fibrations.
\begin{prop}\label{premet}
A $1$-Lipschitz map $\pi : X \too Y$ is a pre-opfibration if and only if it is a metric fibration.
\end{prop}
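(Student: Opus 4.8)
The plan is to translate both notions into the language of distances via the description of a metric space as an $\Fset$-category, and to observe that they reduce to literally the same pair of conditions, differing only by a uniqueness clause. First I would record the dictionary. For a metric space $X$ regarded as an $\Fset$-category, the hom-object $X(x, x')$ is the one-point filtered set $\ast(d(x, x'))$, so there is exactly one morphism between any two objects and its degree equals $d(x, x')$ (every distance being finite since $\infty \notin \mathrm{Im}\,d$); moreover the composite of $x \to x'$ and $x' \to x''$ is the unique morphism $x \to x''$, of degree $d(x, x'')$. Consequently, for $\pi : X \too Y$ and a morphism $\tilde f : x \to z$ with $\pi z = y$, the categorical part of weak $\pi$-cartesianness (Definition \ref{wcart})---the existence and uniqueness of $h$ with $g = h \circ \tilde f$ and $\pi h = \mathrm{id}_y$---holds automatically, since all hom-sets are singletons. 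The only genuine content of weak $\pi$-cartesianness is therefore the degree identity $\deg g = \deg \tilde f + \deg h$, which, for $g : x \to w$ and $h : z \to w$ with $w$ ranging over $\pi^{-1}y$ (exactly the codomains $x''$ for which $\pi g = \pi \tilde f$), reads $d(x, w) = d(x, z) + d(z, w)$.

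Second I would unpack Definition \ref{preopfib}. Since $Y$ is a metric space, a morphism $f : \pi x \to y$ is merely an arbitrary choice of $y \in \Ob Y$, and a lift $\tilde f : x \to z$ with $\pi \tilde f = f$ amounts to choosing $z \in \pi^{-1}y$; the requirement $\deg f = \deg \tilde f$ becomes $d(x, z) = d(\pi x, y)$. Combining this with the previous paragraph, $\pi$ is a pre-opfibration precisely when, for every $x \in X$ and $y \in Y$, there exists $z \in \pi^{-1}y$ satisfying $d(x, z) = d(\pi x, y)$ together with $d(x, w) = d(x, z) + d(z, w)$ for all $w \in \pi^{-1}y$. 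These are exactly conditions (1) and (2) of Definition \ref{mef}, using symmetry of $d$ to identify $d(w, z)$ with $d(z, w)$. Hence the implication ``metric fibration $\Rightarrow$ pre-opfibration'' is immediate: the unique $z$ supplied by Definition \ref{mef} yields, via $\tilde f : x \to z$, a weakly $\pi$-cartesian lift of $f$ of matching degree.

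Finally, for the converse I must upgrade the mere existence guaranteed by the pre-opfibration axiom to the uniqueness demanded by Definition \ref{mef}. This is where non-degeneracy of $X$ enters. If $z, z' \in \pi^{-1}y$ both satisfy conditions (1) and (2), then condition (2) for $z$ evaluated at $w = z'$ gives $d(x, z') = d(x, z) + d(z, z')$, whereas condition (1) gives $d(x, z) = d(\pi x, y) = d(x, z')$, forcing $d(z, z') = 0$; by symmetry and non-degeneracy, $z = z'$. Thus existence plus non-degeneracy produces the unique lift, proving ``pre-opfibration $\Rightarrow$ metric fibration''.

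The step I expect to be the main (though modest) obstacle is the careful verification that the categorical clauses of weak $\pi$-cartesianness are vacuous in the metric setting, so that weak $\pi$-cartesianness collapses to exactly condition (2). Once this dictionary is in place the equivalence is immediate, apart from the uniqueness argument, which is a one-line consequence of non-degeneracy of $X$.
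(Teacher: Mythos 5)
Your proposal is correct and follows essentially the same route as the paper: both reduce weak $\pi$-cartesianness to the degree identity $d(x,w)=d(x,z)+d(z,w)$ (the categorical clauses being vacuous because hom-sets in a metric space are singletons), match the lift condition $\deg f=\deg\tilde f$ with $d(x,z)=d(\pi x,y)$, and obtain uniqueness of $z$ from $d(z,z')=0$ together with non-degeneracy. Your write-up merely makes the ``dictionary'' step more explicit than the paper does.
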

\begin{proof}
Suppose that $\pi$ is a metric fibration. For any $x \in X, y \in Y$ and $f : \pi x \to y$, there exist $z \in \pi^{-1}y$ and $\tilde{f} : x \to z$ satisfying (1) and (2) of Definition \ref{mef}. By $(1)$, we have $\deg f = \deg \tilde{f}$. Further, $\tilde{f}$ is weakly $\pi$-cartesian as follows. Let $g : x \to w$ with $\pi g = f$, namely $w \in \pi^{-1}y$. Then there exists $h : z \to w$ with $\deg h + \deg \tilde{f} = \deg g$ by (2).  Moreover, such a $h$ is unique since $X$ is a metric space. Conversely, suppose that $\pi$ is a pre-opfibration. Let $x \in X, y \in Y$ and let $f : \pi x \to y$ be the unique morphism in $Y(\pi x, y)$. Then there exists a $\tilde{f} : x \to z \in X$ with $\pi \tilde{f} = f$ and $\deg \tilde{f} = \deg f$. Namely, we have $z \in \pi^{-1}y$ and $d(x, z) = d(\pi x, y)$. If there exists another $\tilde{f}' : x \to z'$, then the condition in Definition \ref{wcart} implies that $z = z'$. Hence such a $z$ is unique. Moreover, the same condition implies (2) of Definition \ref{mef}. This completes the proof.
\end{proof}
\begin{prop}\label{opmet}
For any metric fibration $\pi : X \too Y$, the normal oplax functor $F_\pi$ is a metric action.
\end{prop}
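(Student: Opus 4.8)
The plan is to reduce the statement to the characterization of metric actions already in hand, rather than to check the three conditions of Definition \ref{meac} by hand. First I would record that, since $\pi : X \too Y$ is a metric fibration, both $X$ and $Y$ are metric spaces by the very definition (Definition \ref{mef}). By Proposition \ref{premet}, $\pi$ is then a pre-opfibration, so Proposition \ref{preoplax} applies: it produces a normal oplax functor $F_\pi : Y \too \Fsetcat$ together with an isomorphism $X \cong E(F_\pi)$ in $\Fsetcat$ that commutes with $\pi$ and $\pi_{F_\pi}$.

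The key step is to transport the metric-space property across this isomorphism. Since $X$ is a metric space, each hom-object $X(x, x')$ is a one-element filtered set $\ast(d(x, x'))$ with $d(x, x') \in \R_{\geq 0}$. An isomorphism in $\Fsetcat$ is a bijection on objects whose components on hom-objects are isomorphisms in $\Fset$, i.e.\ degree-preserving bijections; hence every hom-object of $E(F_\pi)$ is again a one-element filtered set whose degree equals the corresponding distance in $X$. Symmetry, non-degeneracy and finiteness of distances therefore carry over, so $E(F_\pi)$ is itself a metric space.

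Finally I would invoke the proposition stated immediately before Definition \ref{mef}, which asserts that for a normal oplax functor $F$, the $\Fset$-category $E(F)$ is a metric space if and only if $F$ is a metric action. Applying this to $F = F_\pi$, the fact that $E(F_\pi)$ is a metric space yields at once that $F_\pi$ is a metric action, which is the claim.

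The argument is essentially a bookkeeping chain of earlier results, so there is no genuinely hard step; the only point deserving care, and hence the main obstacle such as it is, is the transport of the metric-space structure along $X \cong E(F_\pi)$, namely confirming that an $\Fsetcat$-isomorphism out of a metric space lands in the full subcategory $\Met$. Once that is spelled out, the conclusion follows immediately from the ``if and only if'' characterization of metric actions.
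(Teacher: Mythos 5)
Your proof is correct, but it takes a genuinely different route from the paper. The paper verifies condition (3) of Definition \ref{meac} directly (treating (1) and (2) as immediate, since fibers are subspaces of the metric space $X$ and the base $Y$ is already a metric space): for the unique morphisms $f : y \to y'$ and $g : y' \to y$ it computes $d(y,y') = d(x, F_\pi f x) = d(F_\pi f x, F_\pi g F_\pi f x) + d(F_\pi g F_\pi f x, x) = d(y',y) + d(F_\pi g F_\pi f x, x)$, forcing $F_\pi g F_\pi f = \mathrm{id}_{F_\pi y}$. You instead chain together Proposition \ref{premet}, the isomorphism $X \cong E(F_\pi)$ of Proposition \ref{preoplax}, and the characterization ``$E(F)$ is a metric space iff $F$ is a metric action'' stated just before Definition \ref{mef}. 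This is logically sound and non-circular (all three ingredients precede Proposition \ref{opmet} and do not depend on it), and the one step you rightly flag --- that an isomorphism in $\Fsetcat$ out of a metric space lands in $\Met$, because hom-components of such an isomorphism are degree-preserving bijections of one-element filtered sets --- is correctly handled. What your approach buys is economy: all three conditions of Definition \ref{meac} are absorbed into a single invocation of the characterization. What the paper's direct computation buys is transparency about where the defining property (2) of a metric fibration enters, namely in the additivity $d(x,w) = d(x,z) + d(z,w)$ over the fiber, which is invisible in your argument because it was already spent in proving the ``only if'' direction of the characterization proposition.
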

\begin{proof}
We only verify (3) of Definition \ref{meac}. Let $y, y' \in Y$ and $f \in Y(y, y'), g \in Y(y', y)$ be unique morphisms. For any $x \in F_\pi y$,  we have $d(y, y') = d(x, F_\pi f x) = d(F_\pi f x, x) = d(F_\pi f x, F_\pi g F_\pi fx) + d(F_\pi g F_\pi fx, x) = d(y', y) + d(F_\pi g F_\pi fx, x)$. Hence we obtain that $d(F_\pi g F_\pi fx, x) = 0$, namely $F_\pi g F_\pi f = {\rm id}_{F_\pi y}$. Similarly, we have $F_\pi f F_\pi g = {\rm id}_{F_\pi y'}$. This completes the proof.
\end{proof}
\begin{cor}
The one to one correspondence of Corollary \ref{oneonecorrfilt} is restricted to a one to one correspondence between metric actions and metric fibrations.
\end{cor}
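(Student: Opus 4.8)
The plan is to assemble the statement entirely from the results already proved in this subsection: the correspondence itself is Corollary \ref{oneonecorrfilt}, so all that remains is to verify that the two assignments $F \mapsto \pi_F$ and $\pi \mapsto F_\pi$ carry metric actions to metric fibrations and back. Once both directions are checked, mutual inverseness is inherited for free, since these are literally the restrictions of the maps of Corollary \ref{oneonecorrfilt} and $\Met$ embeds as a full subcategory of $\Fsetcat$.

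First I would treat the direction $F \mapsto \pi_F$. Suppose $F : X \too \Fsetcat$ is a metric action. By the Proposition characterizing when $E(F)$ is a metric space, $E(F)$ is a metric space, and $X_m$ is a metric space by Definition \ref{meac}~(2). Every object of $E(F)$ lies over an object $x$ with $Fx \neq \emptyset$, so the projection $\pi_F : E(F) \too X$ factors through $X_m$, yielding a $1$-Lipschitz map $\pi_F : E(F) \too X_m$ between metric spaces. By Proposition \ref{nofpof} this map is a pre-opfibration, whence Proposition \ref{premet} immediately gives that it is a metric fibration.

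Conversely, for $\pi \mapsto F_\pi$: if $\pi : X \too Y$ is a metric fibration, then it is a pre-opfibration by Proposition \ref{premet}, so $F_\pi$ is defined, and Proposition \ref{opmet} states precisely that $F_\pi$ is a metric action. The main point to watch, and the only genuinely non-formal step, is the bookkeeping around fibers: one must reconcile the two notions of ``base'', since a metric action has a base $X$ of which only $X_m$ need be a metric space, whereas a metric fibration is a map between two honest metric spaces. This is handled by observing that when $X \neq \emptyset$ the defining property of a metric fibration forces $\pi$ to be surjective, so every fiber $F_\pi y = \pi^{-1}y$ is nonempty and hence $Y$ coincides with its own $(\,\cdot\,)_m$; symmetrically, restricting $\pi_F$ to $X_m$ loses no information because $E(F)$ sees only the objects of $X_m$. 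Modulo this observation, the two assignments restrict to the claimed subclasses and are mutually inverse by Corollary \ref{oneonecorrfilt}, which completes the argument.
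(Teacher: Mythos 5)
Your proposal is correct and follows essentially the same route as the paper, whose proof is simply the citation of Corollary \ref{oneonecorrfilt}, Proposition \ref{premet} and Proposition \ref{opmet}. The extra care you take about $X$ versus $X_m$ and surjectivity of a metric fibration is a reasonable elaboration of details the paper leaves implicit, but it does not change the argument.
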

\begin{proof} 
It follows from Corollary \ref{oneonecorrfilt}, Proposition \ref{premet} and Proposition \ref{opmet}.
\end{proof}
Before giving examples of metric fibrations, we show that the genuine functoriality of a metric action implies the triviality of the fibration.
\begin{df}
Let $\pi : X \too Y$ and $\pi' : X' \too Y$ be metric fibrations. We say that $\pi$ and $\pi'$ are {\it isomorphic} if there is an isometry $\varphi : X \cong X'$ with $\pi' \circ \varphi = \pi$.
\end{df}
\begin{prop}\label{trivfib}
Let $X$ and $Y$ be metric spaces. Let $F : X \too \Fsetcat$ be a metric action with $Fx = Y$ for a fixed $x \in X$. If $F$ is a functor, namely $\tau_{f, g}$'s are all identities for any $f,g \in \Mor X$, then the metric fibration $\pi_F$ is isomorphic to the projection  $X\times Y \too X$.
\end{prop}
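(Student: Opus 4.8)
The plan is to construct an explicit distance-preserving bijection $\varphi : E(F) \too X \times Y$ lying over $X$. First I would set aside the degenerate case $Y = \emptyset$; otherwise, since $X$ is a metric space all its distances are finite, so for the fixed basepoint $x$ with $Fx = Y \neq \emptyset$ and any $x' \in \Ob X$ there is a unique morphism $f_{x'} : x \too x'$, and $Ff_{x'} : Y \too Fx'$ carries the nonempty $Y$ into $Fx'$. Hence $Fx' \neq \emptyset$ for every $x'$, so $X = X_m$ in the notation of Definition \ref{meac}. Writing $g_{x'} : x' \too x$ for the unique reverse morphism, condition (3) of Definition \ref{meac} gives $Fg_{x'}Ff_{x'} = {\rm id}_Y$ and $Ff_{x'}Fg_{x'} = {\rm id}_{Fx'}$; since each of $Ff_{x'}, Fg_{x'}$ is $1$-Lipschitz, they are mutually inverse isometries $Y \cong Fx'$.

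Next I would define $\varphi$ on objects by $\varphi(x', a) = (x', Fg_{x'}(a))$, with inverse $(x', b) \mapsto (x', Ff_{x'}(b))$; that these are mutually inverse is immediate from the two identities above, and clearly the projection $X \times Y \too X$ composed with $\varphi$ recovers $\pi_F$. As both $E(F)$ and $X \times Y$ are metric spaces, it then suffices to check that $\varphi$ preserves distances. Using that in a metric space there is a unique arrow between two points whose degree is their distance, the unique arrow $(x', a) \too (x'', b)$ in $E(F)$ is $(f, \xi)$ with $f : x' \too x''$ and $\xi : Ffa \too b$, so $d((x', a), (x'', b)) = d_X(x', x'') + d_{Fx''}(Ffa, b)$, whereas $d(\varphi(x', a), \varphi(x'', b)) = d_X(x', x'') + d_Y(Fg_{x'}a, Fg_{x''}b)$ by the definition of the product filtration. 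Thus the claim reduces to the single identity $d_{Fx''}(Ffa, b) = d_Y(Fg_{x'}a, Fg_{x''}b)$.

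Applying the isometry $Fg_{x''}$ to the left-hand side, this reduces further to $Fg_{x''} \circ Ff = Fg_{x'}$ as maps $Fx' \too Y$, and this is exactly where the hypothesis that $F$ is a genuine functor is used: since every $\tau$ is the identity we have $Fg_{x''} \circ Ff = F(g_{x''} \circ f)$, and since $X$ is a metric space the composite $g_{x''} \circ f : x' \too x$ must equal the unique morphism $g_{x'}$, whence $F(g_{x''} \circ f) = Fg_{x'}$. I expect this last step to be the only real obstacle: dropping functoriality reintroduces the coherence data $\tau_{f, g}$, so transporting a fibre to the basepoint along two different routes would differ by a transition map of possibly positive degree, and the splitting $E(F) \cong X \times Y$ would break. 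Assembling these observations shows $\varphi$ is a distance-preserving bijection over $X$, hence an isomorphism of metric fibrations as required.
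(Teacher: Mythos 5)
Your proof is correct and follows essentially the same route as the paper: the paper defines $\varphi(x', F(x,x')y) = (x', y)$, which is exactly your map written in terms of the forward transport $Ff_{x'} = F(x,x')$ rather than its inverse $Fg_{x'}$, and both arguments reduce the isometry check to the functoriality identity $F(x',x'')\circ F(x,x') = F(x,x'')$ together with the fact that the transition maps are isometries. Your additional remarks (nonemptiness of all fibres, and that $1$-Lipschitz mutually inverse maps are isometries) just make explicit what the paper leaves implicit.
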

\begin{proof}
Fix a point $x \in X$. Then any points of $Fx'$ for any $x' \in X$ can be expressed as $F(x, x')y$ for some $y \in Fx$, where $(x, x')$ is the unique morphism from $x$ to $x'$. We define a $1$-Lipschitz map $\varphi : E(F) \too X\times Y$ by $\varphi(x', F(x, x')y) = (x', y)$. Then it preserves distances since we have 
\begin{align*}
d((x', F(x, x')y), (x'', F(x, x'')y')) &= d(x', x'') + d(F(x', x'')F(x, x')y, F(x, x'')y')\\
&= d(x', x'') + d(F(x, x'')y, F(x, x'')y') \\
&= d(x', x'') + d(y, y') \\
&= d((x', y), (x'', y')).
\end{align*}
Further, $\varphi$ is apparently a bijection. Hence $\varphi$ gives the desired isomorphism. This completes the proof.
\end{proof}

\begin{eg}\label{bdleg}
Let $K_n$ be the complete graph with $n$ vertices. Let ${\mathrm T} : K_2 \too K_2$ be the non-trivial involution. We construct a metric action $F : K_3 \too \Fset$ by $Fv = K_2$ for each $v \in K_3$, $Fe = {\rm id}_{K_2}$ for two edges of $K_3$ and $Fe' ={\mathrm T}$ for the rest edge $e'$. Then $E(F)$ is a graph shown in Figure 1 right. The left graph in Figure 1 is $K_3 \times K_2$, hence both have the same magnitude by Proposition \ref{mpop} and Corollary \ref{tametame}, although they are not isomorphic (they have different girth). Further, they are diagonal since the left is a product of diagonal graphs and the right is a Pawful graph ( Theorem 4.4 of \cite{Gu}). Thus they have the same magnitude 
homology. Furthermore, they both have trivial path homologies by Theorem 1.3 in \cite{A}.

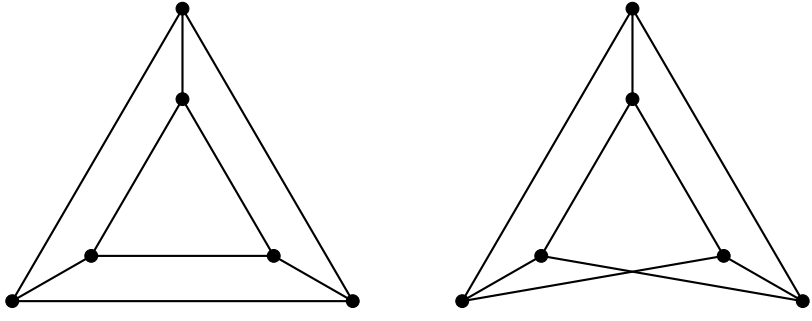
\begin{figure}[htbp]
\centering
\begin{tikzpicture}[scale=1.2]
\filldraw[fill=black, draw=black] (1, 1.732) circle (2pt) node[left] {} ;
\filldraw[fill=black, draw=black] (2, 0) circle (2pt) node[left] {} ;
\filldraw[fill=black, draw=black] (0, 0) circle (2pt) node[left] {} ;
\filldraw[fill=black, draw=black] (1, 2.732) circle (2pt) node[right] {} ;
\filldraw[fill=black, draw=black] (-0.866, -0.5) circle (2pt) node[right] {} ;
\filldraw[fill=black, draw=black] (2.866, -0.5) circle (2pt) node[left] {} ;

\draw[thick] (1, 1.732)--(2, 0);
\draw[thick] (1, 1.732)--(0, 0);
\draw[thick] (0, 0)--(2, 0);
\draw[thick] (0, 0)--(-0.866, -0.5);
\draw[thick] (1, 1.732)--(1, 2.732);
\draw[thick] (2, 0)--(2.866, -0.5);
\draw[thick] (2.866, -0.5)--(-0.866, -0.5);
\draw[thick] (1, 2.732)--(-0.866, -0.5);
\draw[thick] (1, 2.732)--(2.866, -0.5);

\end{tikzpicture}
\hspace{1cm}
\centering
\begin{tikzpicture}[scale=1.2]
\filldraw[fill=black, draw=black] (1, 1.732) circle (2pt) node[left] {} ;
\filldraw[fill=black, draw=black] (2, 0) circle (2pt) node[left] {} ;
\filldraw[fill=black, draw=black] (0, 0) circle (2pt) node[left] {} ;
\filldraw[fill=black, draw=black] (1, 2.732) circle (2pt) node[right] {} ;
\filldraw[fill=black, draw=black] (-0.866, -0.5) circle (2pt) node[right] {} ;
\filldraw[fill=black, draw=black] (2.866, -0.5) circle (2pt) node[left] {} ;

\draw[thick] (1, 1.732)--(2, 0);
\draw[thick] (1, 1.732)--(0, 0);
\draw[thick] (2, 0)--(-0.866, -0.5);
\draw[thick] (0, 0)--(-0.866, -0.5);
\draw[thick] (1, 1.732)--(1, 2.732);
\draw[thick] (2, 0)--(2.866, -0.5);
\draw[thick] (2.866, -0.5)--(0, 0);
\draw[thick] (1, 2.732)--(-0.866, -0.5);
\draw[thick] (1, 2.732)--(2.866, -0.5);

\end{tikzpicture}
\label{graphbdl}
\caption{The left is $K_3\times K_2$, and the right is $E(F)$ for $F$ defined in Example \ref{bdleg}. The right one is also isomorphic to the complete bipartite graph $K_{3, 3}$. It is also shown in Example 3.7 of \cite{L1} that $K_3 \times K_2$ and $K_{3, 3}$ has the same magnitude.}
\end{figure}
\end{eg}
In the case that the base of a metric fibration is a cyclic graph, it is isomorphic to one obtained by ``twisting a fiber along only one edge'', as follows.
\begin{prop}
Let $C_n$ be the cyclic graph with $n (\geq 3)$ vertices, and let $ \pi : X \too C_n$ be a metric fibration. We label the vertices of $C_n$ by $V(C_n) = \{1, \dots, n\}$, and we denote $\pi^{-1}1$ by $Y$. Then there exists an isometry $\theta : Y \too Y$, and $\pi$ is isomorphic to a metric fibration $\pi_\theta$ constructed from $\theta$ as follows : we construct a metric action $F_\theta : C_n \too \Fsetcat$ by $F_\theta i = Y$, $F(i, i+1) = {\rm id}_Y$ for any $1 \leq i \leq n$ except for $F(n, 1) = \theta$. For the other pair of vertices $(j, k)$, we define $F(j, k)$ by the composition of $F(i, i+1)$'s along the shortest edge path connecting the vertices $j$ and $k$. Further, $\pi$ is isomorphic to the projection $Y \times C_n \too C_n$ if $n$ is even.
\end{prop}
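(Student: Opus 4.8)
The plan is to work through the correspondence between metric fibrations over $C_n$ and metric actions established in Corollary \ref{oneonecorrfilt} together with Propositions \ref{premet} and \ref{opmet}, and then to normalize the fiber identifications so that the fibration is displayed in the stated ``twist on a single edge'' form. First I would replace $\pi$ by the metric action $F := F_\pi : C_n \too \Fsetcat$, so that $Y_i := Fi = \pi^{-1}i$ are metric spaces with $Y_1 = Y$, and each edge of $C_n$ yields a map $\sigma_i := F(i, i+1) : Y_i \too Y_{i+1}$ (indices modulo $n$, so $\sigma_n = F(n,1)$). By axiom (3) of Definition \ref{meac} the maps $F(i+1, i)$ are two-sided inverses of the $\sigma_i$, so that every $\sigma_i$ is an isometry; in particular all fibers are isometric to $Y$. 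I then set $\theta := \sigma_n \circ \sigma_{n-1}\circ \cdots \circ \sigma_1 : Y \too Y$, the monodromy once around the cycle, which is an isometry of $Y$.

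Next I would perform the gauge transformation. Define isometries $\rho_1 = \mathrm{id}_Y$ and $\rho_i := (\sigma_{i-1}\circ \cdots \circ \sigma_1)^{-1} : Y_i \too Y$ for $2 \le i \le n$, and let $F_\theta$ be the metric action of the statement (all fibers $Y$, all edge maps the identity except $F_\theta(n,1) = \theta$). A direct computation shows that the fiberwise maps $\rho_i$ intertwine the transport maps: they turn each $\sigma_i$ with $1 \le i \le n-1$ into $\mathrm{id}_Y$ and turn $\sigma_n$ into $\theta$. Assembling the $\rho_i$ into a bijection $\Phi : E(F) \too E(F_\theta)$, $(i, a)\mapsto (i, \rho_i a)$, commuting with the projections to $C_n$, one checks that $\Phi$ preserves distances, because in both total spaces the distance between points of fibers over $j$ and $k$ equals $d_{C_n}(j,k)$ plus the fiber distance after transporting along a geodesic, and $\Phi$ preserves both ingredients. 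This yields the isomorphism $\pi \cong \pi_\theta$ and proves the first assertion.

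For the even case $n = 2m$, the key observation is that $F$ is \emph{strictly} functorial along geodesics of $C_n$. Indeed, if $f : i \to k$ and $g : k \to j$ lie on a common geodesic then $\deg f + \deg g = \deg(g\circ f)$, so by condition (3) of Definition \ref{nofdffilt} each component $\tau_{f,g}a$ has degree $\le 0$; since every fiber is a non-degenerate metric space, a degree-$0$ morphism is an identity, whence $F(g\circ f) = Fg\circ Ff$. Applying this to the antipodal pair $1, m+1$, whose two geodesics are $1 \to 2 \to \cdots \to m+1$ and $1 \to n \to n-1 \to \cdots \to m+1$, both composites must equal $F(1, m+1)$; computed in the normalized form $\pi_\theta$ the first equals $\mathrm{id}_Y$ (it avoids the twisted edge) while the second equals $\theta^{-1}$ (it traverses the edge between $n$ and $1$ backwards). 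Hence $\theta = \mathrm{id}_Y$, so $F_\theta$ is the constant functor with all structure maps identities, and Proposition \ref{trivfib} identifies $\pi_\theta$ with the projection $Y\times C_n \too C_n$.

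The main obstacle is precisely this even case: the passage from ``$E(F)$ is a metric space'' to the rigidity $\theta = \mathrm{id}_Y$ hinges on recognizing that the degree bound on the oplax structure maps forces strict functoriality along every geodesic, and on keeping careful track of which of the two antipodal geodesics meets the single twisted edge. A secondary point requiring care is the well-definedness of $\pi_\theta$ itself: for odd $n$ every pair of vertices has a unique geodesic, so ``composition along the shortest path'' is unambiguous, whereas for even $n$ the only ambiguity occurs at antipodal pairs and is resolved exactly by the identity $\theta = \mathrm{id}_Y$ proved above. Finally, one should confirm that the gauge map $\Phi$ is a global isometry rather than merely a fiberwise one, which follows from the geodesic description of the metric on a metric fibration.
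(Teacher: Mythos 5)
Your proposal is correct and follows essentially the same route as the paper: you define the same monodromy $\theta$ as the composite of the edge transports around the cycle, normalize via the same gauge maps $(\sigma_{i-1}\circ\cdots\circ\sigma_1)^{-1}$ (the paper's $F^{-1}_\pi(1,2)\circ\cdots\circ F^{-1}_\pi(i-1,i)$), verify the isometry by the same two-case computation on whether the geodesic crosses the edge $\{1,n\}$, and in the even case derive $\theta=\mathrm{id}_Y$ from the equality of the two antipodal geodesic composites forced by condition (3) of Definition \ref{nofdffilt}, before invoking Proposition \ref{trivfib}. Your explicit justification that the degree bound on $\tau_{f,g}$ forces strict functoriality along geodesics makes precise a step the paper leaves implicit, but the argument is the same.
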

\begin{proof}
Note that the metric space $E(F_\theta)$ consists of points $V(C_n)\times Y$ with the distance function $d_E$ defined by 
\begin{align*}
&d_{E}((i, y), (j, y'))  \\
 &= \begin{cases} d_{C_n}(i, j) + d_Y(y, y') & \text{ if the shortest path dose not contain the edge } \{1, n\}, \\
d_{C_n}(i, j) + d_Y(y, \theta y') & \text{ if the shortest path contains the edge } \{1, n\}.
\end{cases}
\end{align*}
Let $\theta = F_\pi(n, 1)\circ F_\pi(n-1, n) \circ \dots \circ F_\pi(1, 2)$. We define a map $\varphi : X = E(F_\pi) \too E(F_\theta)$ by $\varphi x = (i, F^{-1}_\pi(1, 2)\circ \dots \circ F^{-1}_\pi(i-1, i)x)$ for $x \in \pi^{-1}i$ and $1\leq i \leq n$. Now we verify that $\varphi$ is an isometry. Let $x \in \pi^{-1}i$ and $x' \in \pi^{-1}j$ with $1 \leq i \leq j \leq n$. When the shortest path connecting $i$ and $j$ does not contain the edge $\{1, n\}$, we have
\begin{align*}
&d_E(\varphi x, \varphi x') \\
&= d_{C_n}(i, j) + d_Y(F^{-1}_\pi(1, 2)\circ \dots \circ F^{-1}_\pi(i-1, i)x, F^{-1}_\pi(1, 2)\circ \dots \circ F^{-1}_\pi(j-1, j)x') \\
&= d_{C_n}(i, j) + d_Y(F_\pi(j-1, j)\circ \dots \circ F_\pi(i, i+1)x, x') \\
&= d_X(x, x').
\end{align*}
Note here that we have $F_\pi(j-1, j)\circ \dots \circ F_\pi(i, i+1) = F_\pi(i, j)$ because the sequence $i, i+1, \dots, j$ is the shortest path and $F_\pi$ should satisfy (3) of Definition \ref{nofdffilt}. When the shortest path connecting $i$ and $j$ contains the edge $\{1, n\}$, we have
\begin{align*}
&d_E(\varphi x, \varphi x') \\
&= d_{C_n}(i, j) +  d_Y(F^{-1}_\pi(1, 2)\circ \dots \circ F^{-1}_\pi(i-1, i)x, \theta F^{-1}_\pi(1, 2)\circ \dots \circ F^{-1}_\pi(j-1, j)x') \\
&= d_{C_n}(i, j) + d_Y(F^{-1}_\pi(1, 2)\circ \dots \circ F^{-1}_\pi(i-1, i)x, F_\pi(n, 1)\circ \dots \circ F_\pi(j, j+1)x') \\
&= d_{C_n}(i, j) + d_Y(x, F_\pi(i-1, i) \circ \dots \circ F_\pi(1, 2)\circ F_\pi(n, 1)\circ \dots \circ F_\pi(j, j+1)x') \\
&= d_X(x, x').
\end{align*}
Note that we have $ F_\pi(i-1, i) \circ \dots \circ F_\pi(1, 2)\circ F_\pi(n, 1)\circ \dots \circ F_\pi(j, j+1) = F_\pi(j, i)$ similarly to the above. Hence $\varphi$ gives the desired isomorphism. Next, we suppose that $n = 2N$. Since we have 
\begin{align*}
 F_\pi(1, N+1) &= F_\pi(N, N+1)\circ \dots \circ F_\pi(1, 2)\\
 &= F_\pi(N+2, N+1)\circ \dots \circ F_\pi(1, 2N)
\end{align*}
from (3) of Definition \ref{nofdffilt}, we obtain that $\theta = {\rm id}_Y$. Hence $\pi$ is isomorphic to the projection $Y \times C_n \too C_n$ by Proposition \ref{trivfib}. This completes the proof.
\end{proof}
\begin{rem}
We don't establish the determination of the magnitude homology of metric fibrations even for the case of graphs. As far as we calculate for some examples by using Hepworth-Willerton's computer program (\cite{HW}), we have not found any difference between rational magnitude homology of metric fibrations and direct products.
\end{rem}
\begin{eg}
Let $C_R$ be a circle of radius $R$ in $\R^2$ with the metric induced from $\R^2$. We construct a metric action $F : C_R \too \Fsetcat$ with $Fc = C_{2R/\pi}$ for $c \in C_R$. For $c, c' \in C_R$, we denote the unique morphism $c \to c'$ by $(c, c')$. We define isometries $F(c, c') : C_{2R/\pi} \too C_{2R/\pi}$ by the (anti-)clockwise $\pi d(c, c')/2R$-rotation if the shortest geodesic from $c$ to $c'$ is (anti-)clockwise. It is easy to check that $F$ is a metric action. We don't know whether they are isormorphic or not.

\end{eg}
\end{document}